\newtheorem{theorem}{Theorem}[section]
\newtheorem{lemma}[theorem]{Lemma}
\newtheorem{proposition}[theorem]{Proposition}
\newtheorem{corollary}[theorem]{Corollary}
\theoremstyle{definition}
\newtheorem{definition}[theorem]{Definition}
\newtheorem{example}[theorem]{Example}
\theoremstyle{remark}
\newtheorem{remark}[theorem]{Remark}
\numberwithin{equation}{section}
\newtheorem{alg}[]{\bf {Algorithm}}
\begin{document}

\begin{abstract}
In this paper, a general framework is proposed for the analysis and
characterization of observability and diagnosability of finite state systems.
Observability corresponds to the reconstruction of the system's discrete
state, while diagnosability corresponds to the possibility of determining the
past occurrence of some particular states, for example faulty states. A
unifying framework is proposed where observability and diagnosability
properties are defined with respect to a critical set, i.e. a set of discrete
states representing a set of faults, or more generally a set of interest.
These properties are characterized and the involved conditions provide an
estimation of the delay required for the detection of a critical state, of the
precision of the delay estimation and of the duration of a possible initial
transient where the diagnosis is not possible or not required. Our framework
makes it possible to precisely compare some of the observabllity and
diagnosability notions existing in the literature with the ones introduced in
our paper, and this comparison is presented.
\end{abstract}

\title[Observability and Diagnosability of Finite State Systems: a Unifying Framework]{Observability and Diagnosability of Finite State Systems: \\a Unifying Framework}

\thanks{The research leading to these results has been partially supported by the Center of Excellence DEWS}

\author[Elena De Santis and Maria D. Di Benedetto]{
Elena De Santis and Maria D. Di Benedetto
}
\address{$^1$Department of Information Engineering, Computer Science and Mathematics, Center of Excellence DEWS,
University of L{'}Aquila, 67100 L{'}Aquila, Italy}
\email{
\{elena.desantis,mariadomenica.dibenedetto\}@univaq.it}

\maketitle

\section{Introduction}

Reconstructing the internal behavior of a dynamical system on the basis of the
available measurements is a central problem in control theory. Starting from
the seminal paper \cite{kalman:1959}, state observability has been
investigated both in the continuous domain (see e.g. the fundamental papers
\cite{Luenb:71} for the linear case and \cite{griffith:1971} for the nonlinear
case), in the discrete state domain (see e.g. \cite{Ozveren90} and
\cite{Ramadge86}), and more recently for hybrid systems (see e.g. the special
issue \cite{DeSDiB:IJRNC(ed)} on observability and observer-based control of
hybrid systems and the references therein, \cite{Balluchi02},
\cite{Bemporad:tac2000}, \cite{Collins:2007}, \cite{baab:2005},
\cite{DeS:CDC03}, \cite{Vidal:2003}, \cite{Bal:Aut13}, \cite{Tanw:AC13}). In
some references dealing with discrete event systems, e.g. in
\cite{Lafortune:2007}, the notion of observability is related to state
disambiguation, which is the property of distinguishing unambiguously among
certain pairs of states in the state space. We will use here the term
observability in the traditional meaning used in \cite{Zaytoon:2013} where
observability corresponds to the reconstruction of the system's discrete
state. Diagnosability, a property that is closely related to observability but
is more general, corresponds to the possibility of detecting the occurrence of
some particular state, for example a faulty state, on the basis of the
observations. An excellent survey of recent advances on diagnosis methods for
discrete systems can be found in \cite{Zaytoon:2013}. The formal definition
and analysis of observability and diagnosability depend on the model, on the
available output information, and on the objective for which state
reconstruction is needed, e.g. for control purposes, for detection of critical
situations, and for diagnosis of past system evolutions. It is therefore hard,
in general, to understand the precise relationships that exist between the
different notions that exist in the literature.

In this paper, we propose a unifying framework where observability and
diagnosability are defined with respect to a subset of the state space, called
critical set. A state belonging to the critical set is called critical state.
This idea comes from safety critical applications, e.g. Air Traffic Management
\cite{DiBenedetto:CDC2005}, \cite{Des:Springer06}, where the critical set of
discrete states represents dangerous situations that must be detected to avoid
unsafe or even catastrophic behavior of the system. However, the critical set
can represent a set of faults, or more generally any set of interest. We
define and characterize observability and diagnosability in a uniform
set-membership-based formalism. The set-membership formalism and the derived
algorithms are very simple and intuitive, and allow checking the properties
without constructing an observer, thereby avoiding the exponential complexity
of the observer design. The definitions of observability and diagnosability
are given in a general form that is parametric with respect to the delay
required for the detection of a critical state, and the precision of the delay
estimation. Using the proposed conditions that characterize those properties,
we can check diagnosability of a critical event, such as a faulty event, and
at the same time compute the delay of the diagnosis with respect to the
occurrence of the event, the uncertainty about the time at which that event
occurred, and the duration of a possible initial transient where the diagnosis
is not possible or not required. These evaluations are useful to better
understand the characteristics of the system and can be used in the
implementation of the diagnoser.

While in the literature on discrete event systems a transition-based model is
used, we adopt a state-based approach, similarly to what was done in
\cite{Lin:1994} where an on-line diagnosability problem for a deterministic
Moore automaton with partial state observation was solved, in \cite{Zad:2003}
where the focus was on the complexity reduction in the diagnoser design, and
in \cite{Takai:AC12} where verification of codiagnosability is performed.
Because of the different formalism used in the transition-based and
state-based approaches, a comparison between our definitions and those
existing in the literature on discrete event systems is very hard to achieve
without a unifying framework where the different notions can all be formulated
and compared. We show that, using our formalism, we are able to understand the
precise relationships that exist between the properties we analyze and some of
the many diagnosability concepts that exist in the literature.

The paper is organized as follows. After introducing the main definitions in
Section \ref{sec1}, Section \ref{sec2} is devoted to establishing some
geometrical tools that are instrumental in proving our results. In Section
\ref{sec3}, observability and diagnosability properties are completely
characterized. The proofs of the main theorems are constructive and show how a
diagnoser can be determined. Some examples are described in Section
\ref{examples}. Finally, in the Appendix we present an extension of some
results of the paper under milder technical assumptions.

\textbf{Notations: }\ The symbol $\mathbb{Z}$ denotes the set of nonnegative
integer numbers. For $a,b\in\mathbb{Z}$, $\left[  a,b\right]$ denotes the set  $\left[  a,b\right]=\left\{
x\in\mathbb{Z}:a\leq x\leq b\right\}  $. For a set $X$, the symbol $\left\vert
X\right\vert $ denotes its cardinality. For a set $Y\subset X$, where the
symbol $\subset$ has to be understood as "subset", not necessarily strict, the
symbol $\overline{Y}$ denotes the complement of $Y$ in $X$, i.e. $\overline
{Y}=\left\{  x\in X:x\notin Y\right\}  $. For $W\subset X\times X$, the symbol
$W^{-}$ denotes the symmetric closure of $W$, i.e. $W^{-}=\left\{  \left(
x_{1},x_{2}\right)  :\left(  x_{1},x_{2}\right)  \in W\text{ or }\left(
x_{2},x_{1}\right)  \in W\right\}  $. The null event is denoted by $\epsilon$.
For a string $\sigma$, $\left\vert \sigma\right\vert $ denotes its length,
$\sigma(i)$, $i\in\left\{  1,2,...,\left\vert \sigma\right\vert \right\}  $,
denotes the $i-th$ element, and $\left\vert \sigma\right\vert _{\left[
a,b\right]  }$ is the string $\sigma(a)\sigma(a+1)...\sigma(b)$. $P\left(
\sigma\right)  $ is the projection of the string $\sigma$, i.e. the string
obtained from $\sigma$ by erasing the symbol $\epsilon$ (see e.g.
\cite{Ramadge:89}). In all figures, the cardinal number inside the circle
denotes the state, the lowercase letter besides the same circle denotes the
output associated to that state.

\section{\label{sec1}Diagnosability properties and their
relationships\label{Definitions}}

We consider a Finite State Machine (FSM)
\[
M=\left(  X,X_{0},Y,H,\Delta\right)
\]
where:

- $X$ is the finite set of states;

- $X_{0}\subset X$ is the set of initial states;

- $Y$ is the finite set of outputs;

- $H:X\rightarrow Y$ is the output function;

- $\Delta\subset X\times X$ is the transition relation.

For $i\in X$, define $succ\left(  i\right)  =\left\{  j\in X:\left(
i,j\right)  \in\Delta\right\}  $ and $pre\left(  i\right)  =\left\{  j\in
X:\left(  j,i\right)  \in\Delta\right\}  $.

We make the following standard assumption:

\begin{description}
\item[Assumption 1] (liveness) $succ(i)\neq\emptyset$, $\forall i\in X$.
\end{description}

Any finite or infinite string $x$ with symbols in $X$ that satisfies the
condition%
\begin{equation}%
\begin{array}
[c]{c}%
x\left(  1\right)  \in X\\
x\left(  k+1\right)  \in succ\left(  x\left(  k\right)  \right)
,\hspace{0.5cm}k=1,2,...,\left\vert x\right\vert -1
\end{array}
\label{state}%
\end{equation}
is called a state execution (or state trajectory or state evolution) of the
FSM $M$. The singleton $\left\{  i\in X\right\}  $ is an execution.

Let $\mathcal{X}^{\ast}$ be the set of all the state executions of $M$. Then,
for a given $\Psi\subset X$, we can define the following subsets of
$\mathcal{X}^{\ast}$:

\qquad - $\mathcal{X}_{\Psi}$ is the set of state executions $x\in
\mathcal{X}^{\ast}$ with $x\left(  1\right)  \in\Psi$

\qquad - $\mathcal{X}_{\Psi,\infty}$ is the set of infinite state executions
$x\in\mathcal{X}^{\ast}$ with $x\left(  1\right)  \in\Psi$. For simplicity,
the set $\mathcal{X}_{X_{0},\infty}$ will be denoted by $\mathcal{X}$

\qquad - $\mathcal{X}^{\Psi}$ is the set of finite state executions
$x\in\mathcal{X}^{\ast}$ with last symbol in $\Psi$

Obviously,
\[
\mathcal{X}_{X}=\mathcal{X}^{\ast}%
\]
and
\[
\mathcal{X}_{\Psi,\infty}\mathcal{\subset X}_{\Psi}\subset\mathcal{X}^{\ast}%
\]

Let $\mathcal{Y}$ be the set of strings with symbols in $\widehat{Y}=\left\{
y\in Y:y\neq\epsilon\right\}  $. Define $\mathbf{y}:\mathcal{X}^{\ast
}\rightarrow\mathcal{Y}$, the function that associates to a state execution
the corresponding output execution, as
\[
\mathbf{y}\left(  x\right)  =P\left(  \sigma\right)
\]
where
\[
\sigma=H\left(  x(1)\right)  ...H\left(  x(n)\right)  ,n=\left\vert
x\right\vert
\]
if $\left\vert x\right\vert $ is finite. Otherwise
\[
\mathbf{y}\left(  x\right)  =P\left(  \sigma_{\infty}\right)
\]
where $\sigma_{\infty}$ is an infinite string recursively defined as
\begin{align*}
\sigma_{1}  &  =H\left(  x(1)\right) \\
\sigma_{k+1}  &  =\sigma_{k}H\left(  x(k+1)\right)  ,\text{ }k=1,2,...
\end{align*}

Finally, for $x\in\mathcal{X}_{X_{0}}$
\[
\mathbf{y}^{-1}\left(  \mathbf{y}\left(  x\right)  \right)  =\left\{
\widehat{x}\in\mathcal{X}_{X_{0}}:\mathbf{y}\left(  \widehat{x}\right)
=\mathbf{y}\left(  x\right)  \right\}
\]


We now propose a framework where observability and diagnosability are defined
with respect to a subset of the state space $\Omega\subset X$ called critical
set. The set $\Omega$ may represent unsafe states, faulty states, or more
generally any set of states of interest.

For a string $x\in\mathcal{X}$, two cases are possible:

$i)$ $x\left(  k\right)  \notin\Omega$, $\forall k\in\mathbb{Z}$

$ii)$ $x\left(  k\right)  \in\Omega$, for some $k\in\mathbb{Z}$

If the second condition holds, let $k_{x}$ be the minimum value of $k$ such
that $x\left(  k\right)  \in\Omega$, i.e.%
\begin{gather}
k_{x}=k\in\mathbb{Z}:x\left(  k\right)  \in\Omega\nonumber\\
\text{and}\label{kx}\\
(k=1\text{ or }x(h)\notin\Omega,\forall h\in\left[  1,k-1\right]  )\nonumber
\end{gather}
Otherwise set $k_{x}=\infty$.

The next definition describes the capability of inferring, from the output
execution, that the state belongs to the set $\Omega$, at some step during the
execution, after a finite transient or after a finite delay or with some
uncertainty in the determination of the step. The precise meaning of the
parameters used to describe those characteristics will be discussed after the definition.

\begin{definition}
\label{Def_somehow}The FSM $M$ is parametrically diagnosable with respect to a
set $\Omega\subset X$ (shortly parametrically $\Omega-diag$) if there exist
$\tau$ and $\delta\in\mathbb{Z}$, and $T\in\mathbb{Z}\cup\left\{
\infty\right\}  $ such that for any string $x\in\mathcal{X}$ with finite
$k_{x}$, whenever $x\left(  k\right)  \in\Omega$ and $k\in\left[  \max\left\{
k_{x},\left(  \tau+1\right)  \right\}  ,k_{x}+T\right]  $, it follows that for
any string $\widehat{x}\in\mathbf{y}^{-1}\left(  \mathbf{y}\left(  \left.
x\right\vert _{\left[  1,k+\delta\right]  }\right)  \right)  $, $\widehat{x}%
\left(  h\right)  \in\Omega$, for some $h\in\left[  \max\left\{  1,\left(
k-\gamma_{1}\right)  \right\}  ,k+\gamma_{2}\right]  $ and for some
$\gamma_{1},\gamma_{2}\in\mathbb{Z}$, $\gamma_{2}\leq\delta$.
\end{definition}

If $x\left(  k\right)  \in\Omega$ for some $k\in\mathbb{Z}$, in what follows
the condition $x\left(  k\right)  \in\Omega$ is called \emph{crossing event},
and $k$ is the step at which the crossing event occurs.

The value $\gamma=\max\left\{  \gamma_{1},\gamma_{2}\right\}  $ is the
uncertainty radius in the reconstruction of the step at which the crossing
event occurred. The parameter $\delta$ corresponds to the delay of the
crossing event detection while $\tau$ corresponds to an initial time interval
where the crossing event is not required to be detected.

The detection of the crossing event is required whenever it occurs in the
interval defined by the parameter $T$.

To better understand the role of these parameters, consider the examples in
Figure \ref{fig1}. For fixed values $\tau$, $T$, $\delta$ and $\gamma$, we
have represented three possible cases, corresponding to three different
executions, and hence with different values for $k_{x}$. In the first case
$\max\left\{  k_{x},\left(  \tau+1\right)  \right\}  =\left(  \tau+1\right)
$. Hence, any crossing event occurring in $\left[  \left(  \tau+1\right)
,k_{x}+T\right]  $ has to be detected, with maximum delay $\delta$ and with
maximum uncertainty $\gamma$. Crossing events occurring in $\left[
1,\tau\right]  $ are not needed to be detected. In \ the second case,
$\max\left\{  k_{x},\left(  \tau+1\right)  \right\}  =k_{x}$, and therefore
any crossing event up to step $k_{x}+T$ has to be detected, with maximum delay
$\delta$ and with maximum uncertainty $\gamma$. Finally in the last case no
detection is required.%
\begin{figure}[ptb]
\begin{center}
\includegraphics[scale=0.5]{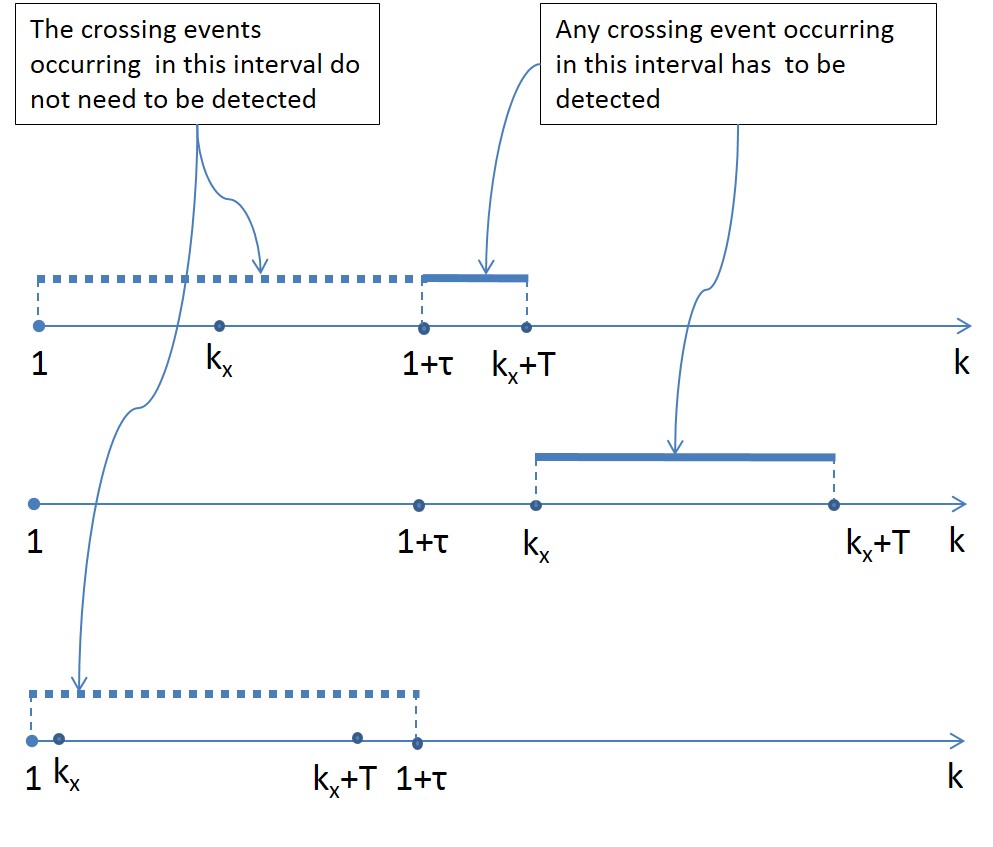}
\end{center}
\caption{Illustration of parameters $k_{x}$, $\tau$ and $T$}%
\label{fig1}%
\end{figure}

Obviously $T=0$ and $\tau=0$ mean that only the first crossing event has to be
detected. Moreover, $\delta=0$ implies $\gamma=0$, but $\gamma=0$ does not
imply in general $\delta=0$.%

\begin{figure}[ptb]
\begin{center}
\includegraphics[scale=0.5]{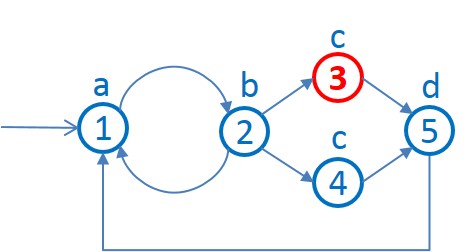}
\end{center}
\caption{The FSM is not parametrically $\left\{  3\right\}  -diag$.}%
\label{fig_nodiag}%
\end{figure}

\begin{example}
As an example, the FSM in Figure \ref{fig_nodiag}, where $\Omega=\left\{
3\right\}  $, is not parametrically $\Omega-diag$: in fact for any $\tau$
there exists a state execution that crosses the set $\Omega$ for the first
time at some $k>\tau$, and it is not possible to detect the crossing event
neither immediately nor with delay, neither exactly nor with uncertainty.
\end{example}

By definition of parametric diagnosability, the following monotonicity
property holds:

\begin{proposition}
\label{prop_mono}If $M$ is parametrically $\Omega-diag$ with parameters $\tau
$, $\delta$, $T$, $\gamma_{1}$, $\gamma_{2}$ then$\ $it is parametrically
$\Omega-diag$ with parameters $\tau^{\prime}$, $\delta^{\prime}$, $T^{\prime}%
$, $\gamma_{1}^{\prime}$, $\gamma_{2}^{\prime}$, where $\tau^{\prime}\geq\tau
$, $\delta^{\prime}\geq\delta$, $T^{\prime}\leq T$, $\gamma_{1}^{\prime}%
\geq\gamma_{1}$, $\gamma_{2}^{\prime}\geq\gamma_{2}$, $\gamma_{2}^{\prime}%
\leq\delta^{\prime}$.
\end{proposition}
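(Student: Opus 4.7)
The proof plan is to verify each of the five weakenings of the parameters in turn, using the original diagnosability hypothesis as a black box. I would fix an arbitrary string $x\in\mathcal{X}$ with finite $k_x$ and an index $k\in[\max\{k_x,\tau'+1\},k_x+T']$, and try to reduce to the original statement applied at this same $k$.

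First I would handle $\tau$ and $T$. Because $\tau'\geq\tau$ we have $\max\{k_x,\tau'+1\}\geq\max\{k_x,\tau+1\}$, and because $T'\leq T$ we have $k_x+T'\leq k_x+T$. Hence the interval $[\max\{k_x,\tau'+1\},k_x+T']$ is contained in $[\max\{k_x,\tau+1\},k_x+T]$, so $k$ lies in the latter and the original hypothesis applies at $k$.

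Next I would handle $\delta$ via a prefix argument, which is the only nontrivial part. Given an arbitrary $\widehat{x}'\in \mathbf{y}^{-1}(\mathbf{y}(\left.x\right|_{[1,k+\delta']}))$, I need to produce a $\widehat{x}\in \mathbf{y}^{-1}(\mathbf{y}(\left.x\right|_{[1,k+\delta]}))$ such that $\widehat{x}$ is a prefix of $\widehat{x}'$ of length at least $k+\gamma_2$. Since $\delta'\geq\delta$, the string $\left.x\right|_{[1,k+\delta]}$ is a prefix of $\left.x\right|_{[1,k+\delta']}$, so its observable projection is a prefix of $\mathbf{y}(\left.x\right|_{[1,k+\delta']})=\mathbf{y}(\widehat{x}')$. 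I then take $\widehat{x}$ to be the shortest prefix of $\widehat{x}'$ whose projected output equals $\mathbf{y}(\left.x\right|_{[1,k+\delta]})$; this prefix exists because $P$ acts letter by letter, and its length is at least $k+\delta\geq k+\gamma_2$ (using $\gamma_2\leq\delta$). Thus $\widehat{x}\in\mathbf{y}^{-1}(\mathbf{y}(\left.x\right|_{[1,k+\delta]}))$, and applying the original hypothesis yields some $h\in[\max\{1,k-\gamma_1\},k+\gamma_2]$ with $\widehat{x}(h)\in\Omega$. Since $\widehat{x}$ is a prefix of $\widehat{x}'$, $\widehat{x}'(h)=\widehat{x}(h)\in\Omega$.

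Finally I would handle the uncertainty radii: $\gamma_1'\geq\gamma_1$ gives $\max\{1,k-\gamma_1'\}\leq\max\{1,k-\gamma_1\}$, and $\gamma_2'\geq\gamma_2$ gives $k+\gamma_2'\geq k+\gamma_2$, so the index $h$ obtained above lies in $[\max\{1,k-\gamma_1'\},k+\gamma_2']$, as required; the assumption $\gamma_2'\leq\delta'$ is simply carried through to satisfy the definition. The main (and essentially only) obstacle is the prefix-extraction step for the parameter $\delta$, because of the need to account for the projection $P$ that removes $\epsilon$ symbols; once one observes that $P$ commutes with taking prefixes in the appropriate sense, the argument reduces to straightforward inclusions of intervals.
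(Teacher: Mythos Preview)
The paper does not give a proof of this proposition: it simply states that the monotonicity property ``holds by definition of parametric diagnosability'' and moves on. Your proposal is an explicit unpacking of precisely that remark, carrying out the interval inclusions for $\tau,T,\gamma_1,\gamma_2$ and the prefix argument for $\delta$, so it is in the same spirit as (and more detailed than) the paper.

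One small point worth tightening: the claim that the shortest prefix $\widehat{x}$ of $\widehat{x}'$ with $\mathbf{y}(\widehat{x})=\mathbf{y}(x|_{[1,k+\delta]})$ has length at least $k+\delta$ is immediate only under Assumption~2 ($\epsilon\notin Y$), where output length and execution length coincide and one can simply take $\widehat{x}=\widehat{x}'|_{[1,k+\delta]}$. In the general setting with silent states the length of that prefix is governed by the positions of non-silent states in $\widehat{x}'$, not by $k+\delta$, so the bound you assert does not follow directly. Since the paper treats the proposition as self-evident and all the main results are developed under Assumption~2 (with the $\epsilon$ case handled separately in the Appendix via a reduction), this is a minor technicality rather than a substantive gap.
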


Depending on the values taken by $\tau$, $\delta$ and $T$, special instances
of Definition \ref{Def_somehow} are obtained. We consider the following, which
highlight the role of these parameters:
\begin{equation}%
\begin{tabular}
[c]{llll}%
$a.$ & $T=0$ & $\tau=0$ & $\delta\geq0$\\
$b.$ & $T=\infty$ & $\tau>0$ & $\delta>0$\\
$c.$ & $T=\infty$ & $\tau>0$ & $\delta=0$\\
$d.$ & $T=\infty$ & $\tau=0$ & $\delta>0$\\
$e.$ & $T=\infty$ & $\tau=0$ & $\delta=0$%
\end{tabular}
\ \ \ \ \ \label{special_cases}%
\end{equation}

\begin{description}
\item[case a.] Since $T=0$ the crossing event can be detected the first time
it occurs, immediately or with some delay. If $\Omega\subset X_{0}$, and
$\gamma_{1}=\gamma_{2}=0$, case a. becomes an extension of the definition of
initial state observability property, as given e.g. in \cite{Ramadge86}, which
we call here $\Omega-$\emph{initial state observability}.

\item[case b.] In this case, the crossing event can be detected with a maximum
delay of $\delta$ steps whenever it occurs for $k\geq\tau+1$. However, since
$\tau\geq1$, the event $x\left(  k\right)  \in\Omega$, $k\in\left[
1,\tau\right]  $ may not be detected (in this case, parametric diagnosability
is an "eventual" property). The notion of $\left(  k_{1},k_{2}\right)
-$\emph{detectability}, as introduced in \cite{Shu:AC2013} can be retrieved as
a special case. In fact it corresponds to parametric $\left\{  x\right\}
-$diagnosability, $\forall x\in X$, with parameters $T=\infty$, $\tau=k_{1}$,
$\delta=k_{2}$ and $\gamma=0$.

\item[case c.] With respect to case $b.$, in this case the crossing event can
be detected without any delay, and the FSM $M$ is said to be $\Omega
-$\emph{current state observable.} It is again an eventual property. Moreover,
$M$ is said to be \emph{current state observable} if it is $\left\{
x\right\}  -$current state observable, $\forall x\in X$. The notion of current
state observability coincides with the one studied in \cite{Bal:Aut13} and
with the notion of Strong Detectability as defined in \cite{Shu:2007}.
Finally, if $\Omega=\left\{  x\right\}  $ and $M$ is $\Omega-$current state
observable, then the state $x$ is \emph{always observable}, as defined in
\cite{Ozveren90}.

\item[case d.] The meaning is the same as in case $b.$, but with $\tau=0$.
\ In this case, the FSM $M$ is said to be \emph{critically diagnosable} with
respect to $\Omega$ (critical $\Omega-diag$). Critical diagnosability is an
"always" property.

\item[case e.] The meaning is the same as in case $c.$, but with $\tau=0$. The
FSM $M$ is said to be\ \emph{critically observable} with respect to $\Omega$
(critical $\Omega-obs$). It is again an "always" property. The notion of
critical observability for an FSM was introduced in \cite{DiBenedetto05c} and
\cite{DiBenedetto:CDC2005}. In \cite{Des:Springer06} the same notion was
extended to linear switching systems with minimum and maximum dwell time.
Finally, in \cite{PolaAutom2016} the analysis of critical observability was
extended to the case of networks of Finite State Machines.
\end{description}

For an exhaustive analysis, in addition to the cases above, let's consider
also the case when $T$ is finite and nonzero. This case deserves some
attention only if we require the exact reconstruction of the step at which the
crossing event occurs, i.e. $\gamma=0$. In fact, if $M$ is parametrically
$\Omega-diag$ with parameters $\tau$, $\delta$, $T=0$, $\gamma_{1}$,
$\gamma_{2}$, then, by Definition \ref{Def_somehow} and by Proposition
\ref{prop_mono} it is parametrically $\Omega-diag$ also with parameters
$\tau+\widehat{T}$, $\delta+\widehat{T}$, $T=\widehat{T}$, $\gamma
_{1}+\widehat{T}$, $\gamma_{2}+\widehat{T}$, for any finite $\widehat{T}%
\in\mathbb{Z}$. Conversely, by Definition \ref{Def_somehow},\ if $M$ is
parametrically $\Omega-diag$ with parameters $\tau$, $\delta$, $T=\widehat{T}%
$, $\gamma_{1}$, $\gamma_{2}$ then it is $\Omega-diag$ also with parameters
$\tau$, $\delta$, $T=0$, $\gamma_{1}$, $\gamma_{2}$. The characterization of
the property in the case $T$ finite and nonzero and $\gamma=0$ is a
generalization of case $a$, which is not explicitly addressed in this paper.

For simplicity of exposition, we now define three properties that correspond
to case $a.$, cases $b.$ and $c.$, cases $d.$ and $e.$, respectively.

\begin{definition}
\label{def_oneshot}(case a.) The FSM $M$ is \emph{diagnosable} with respect to
a set $\Omega\subset X$ ($\Omega-diag$) if there exists $\delta\in\mathbb{Z}$,
such that for any $x\in\mathcal{X}$ for which $k_{x}\neq\infty$, it follows
that for any string $\widehat{x}\in\mathbf{y}^{-1}\left(  \mathbf{y}\left(
\left.  x\right\vert _{\left[  1,k_{x}+\delta\right]  }\right)  \right)  $,
$\widehat{x}\left(  h\right)  \in\Omega$, for some $h\in\left[  \max\left\{
1,k_{x}-\gamma_{1}\right\}  ,k_{x}+\gamma_{2}\right]  $ and for some
$\gamma_{1},\gamma_{2}\in\mathbb{Z}$, $\gamma_{2}\leq\delta$. If the property
holds with $\delta=0$, $M$ will be called \emph{observable} with respect to a
set $\Omega\subset X$ ($\Omega-obs$). If $\Omega\subset X_{0}$ and the
property holds with $\gamma_{1}=\gamma_{2}=0$, $M$ will be called $\Omega
-$\emph{initial state observable.}
\end{definition}

The $\Omega-diag$ property of Definition \ref{def_oneshot} corresponds to the
one given in \cite{Sampath95}, where only the detection of the condition
$x\left(  h\right)  \in\Omega$, for some $h\in\left[  1,k_{x}+\delta\right]
$, is required but not the refinement of the identification of the step at
which the crossing event occurs. However, we will show in Section
\ref{omega_diag} that these two properties are equivalent, i.e. an FSM enjoys
the property in \cite{Sampath95} if and only if the requirements of Definition
\ref{def_oneshot} hold.

\begin{definition}
\label{Def_eventual}(cases b. and c.) The FSM $M$ is \emph{eventually
diagnosable} with respect to a set $\Omega\subset X$ (eventually $\Omega
-diag$) if there exist $\tau$ and $\delta\in\mathbb{Z}$ such that for any
string $x\in\mathcal{X}$ with finite $k_{x}$, whenever $x\left(  k\right)
\in\Omega$ and $k\geq\max\left\{  k_{x},\left(  \tau+1\right)  \right\}  $, it
follows that for any string $\widehat{x}\in\mathbf{y}^{-1}\left(
\mathbf{y}\left(  \left.  x\right\vert _{\left[  1,k+\delta\right]  }\right)
\right)  $, $\widehat{x}\left(  h\right)  \in\Omega$, for some $h\in\left[
\max\left\{  1,k-\gamma_{1}\right\}  ,k+\gamma_{2}\right]  $ and for some
$\gamma_{1},\gamma_{2}\in\mathbb{Z}$, $\gamma_{2}\leq\delta$. If the condition
holds with $\delta=0$, $M$ will be called \emph{eventually observable} with
respect to a set $\Omega\subset X$ (eventually $\Omega-obs$).
\end{definition}

Finally, we state the following definition

\begin{definition}
\label{Def_critical}(cases d. and e.) The FSM $M$ is \emph{critically
diagnosable} with respect to a set $\Omega\subset X$ (critically $\Omega
-diag$) if there exists $\delta\in\mathbb{Z}$, such that for any string
$x\in\mathcal{X}$ with finite $k_{x}$, whenever $x\left(  k\right)  \in\Omega
$, it follows that for any string $\widehat{x}\in\mathbf{y}^{-1}\left(
\mathbf{y}\left(  \left.  x\right\vert _{\left[  1,k+\delta\right]  }\right)
\right)  $, $\widehat{x}\left(  h\right)  \in\Omega$, for some $h\in\left[
\max\left\{  1,k-\gamma_{1}\right\}  ,k+\gamma_{2}\right]  $, and for some
$\gamma_{1},\gamma_{2}\in\mathbb{Z}$, $\gamma_{2}\leq\delta$. \ If the
condition holds with $\delta=0$, the FSM $M$ is called \emph{critically
observable} with respect to $\Omega\subset X$ (critically $\Omega-obs$)
\end{definition}

The following relationship can be established between the diagnosability
properties introduced above.

\begin{proposition}
\label{prop}$M$ is critically $\Omega-diag$ if and only if it is $\Omega-diag$
and eventually $\Omega-diag$.
\end{proposition}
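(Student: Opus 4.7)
The plan is to prove both directions by case analysis on the step $k$ at which a crossing event occurs; the forward direction is a specialization argument, while the backward direction requires combining the two hypotheses with a careful inflation of the uncertainty window to absorb the transient $\tau$.

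For $(\Rightarrow)$, assume $M$ is critically $\Omega$-diag with parameters $(\delta,\gamma_{1},\gamma_{2})$. Instantiating Definition \ref{Def_critical} at $k=k_{x}$ immediately recovers Definition \ref{def_oneshot}, giving $\Omega$-diagnosability with the same $(\delta,\gamma_{1},\gamma_{2})$. For Definition \ref{Def_eventual}, choosing $\tau=0$ collapses the hypothesis $k\geq\max\{k_{x},\tau+1\}$ to $k\geq k_{x}$, which is automatic whenever $x(k)\in\Omega$, so $M$ is eventually $\Omega$-diag with parameters $(0,\delta,\gamma_{1},\gamma_{2})$.

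For $(\Leftarrow)$, let $M$ be $\Omega$-diag with $(\delta_{1},\gamma_{1}^{\prime},\gamma_{2}^{\prime})$ and eventually $\Omega$-diag with $(\tau,\delta_{2},\gamma_{1}^{\prime\prime},\gamma_{2}^{\prime\prime})$. I would propose the critical parameters
\[
\delta:=\max\{\delta_{1},\delta_{2}\},\qquad\Gamma_{1}:=\max\{\gamma_{1}^{\prime},\gamma_{1}^{\prime\prime}\}+\tau,\qquad\Gamma_{2}:=\max\{\gamma_{2}^{\prime},\gamma_{2}^{\prime\prime}\},
\]
noting $\Gamma_{2}\leq\delta$. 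Fix $x\in\mathcal{X}$ with $k_{x}<\infty$, $k\geq k_{x}$ with $x(k)\in\Omega$, and $\widehat{x}\in\mathbf{y}^{-1}(\mathbf{y}(\left.x\right|_{[1,k+\delta]}))$. A prefix-extraction step, based on the fact that $P$ erases $\epsilon$ symbol-by-symbol and so preserves prefix order, produces for each $n\leq k+\delta$ an index $m$ with $\left.\widehat{x}\right|_{[1,m]}\in\mathbf{y}^{-1}(\mathbf{y}(\left.x\right|_{[1,n]}))$; this is what lets me feed the shorter observation windows required by the hypotheses. Then I would split on $k$: if $k\geq\tau+1$, eventual diagnosability with $n=k+\delta_{2}$ returns $h\in[\max\{1,k-\gamma_{1}^{\prime\prime}\},k+\gamma_{2}^{\prime\prime}]\subseteq[\max\{1,k-\Gamma_{1}\},k+\Gamma_{2}]$; if $k_{x}\leq k\leq\tau$, $\Omega$-diagnosability with $n=k_{x}+\delta_{1}\leq k+\delta$ returns $h\in[\max\{1,k_{x}-\gamma_{1}^{\prime}\},k_{x}+\gamma_{2}^{\prime}]$, and since $0\leq k-k_{x}\leq\tau$ one has $h\leq k_{x}+\gamma_{2}^{\prime}\leq k+\Gamma_{2}$ and $h\geq k_{x}-\gamma_{1}^{\prime}\geq k-\tau-\gamma_{1}^{\prime}\geq k-\Gamma_{1}$.

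The hard part will be justifying the prefix-extraction step cleanly: it bridges conclusions stated on short observation windows (as in $\Omega$-diag and eventual $\Omega$-diag) and the long observation window $\mathbf{y}(\left.x\right|_{[1,k+\delta]})$ handed to the critical version, and it must be argued from the definition of $\mathbf{y}$ as a projection that renumbers positions. The second delicate point is the inflation $\Gamma_{1}=\max\{\gamma_{1}^{\prime},\gamma_{1}^{\prime\prime}\}+\tau$, which is forced by the intermediate regime $k_{x}\leq k\leq\tau$: the only detection available there fires near $k_{x}$, so the window around $k$ must be wide enough to reach back by the worst-case gap $k-k_{x}\leq\tau-1$.
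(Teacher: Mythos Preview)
The proposal is correct and follows essentially the same route as the paper: necessity by specialization, sufficiency by combining the two hypotheses via a case split on $k$ versus the transient bound. Your parameter choices differ slightly from the paper's (which sets $\delta=\gamma_2=\max\{\tau',\delta',\delta''\}$ and $\gamma_1=\max\{\tau',\gamma_1'\}$, exploiting the cruder trick that $\gamma_1\geq\tau'$ collapses the lower bound $\max\{1,k-\gamma_1\}$ to $1$ throughout the transient regime), and you are more explicit about the prefix-extraction step that the paper's terse argument leaves implicit.
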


\begin{proof}
The necessity is obvious. Sufficiency: suppose that $M$ is eventually
$\Omega-diag$ with parameters $\gamma_{1}^{\prime}$, $\gamma_{2}^{\prime}$,
$\tau^{\prime}$ and $\delta^{\prime}$ and that it is $\Omega-diag$ with
parameter $\delta^{"}$. Then, $M$ is eventually $\Omega-diag$ with parameters
$\tau=0$, $\delta=\max\left\{  \tau^{\prime},\delta^{\prime},\delta
^{"}\right\}  $, $\gamma_{1}=\max\left\{  \tau^{\prime},\gamma_{1}^{\prime
}\right\}  $, $\gamma_{2}=\delta=\max\left\{  \tau^{\prime},\delta^{\prime
},\delta^{"}\right\}  $, and hence it is critically $\Omega-diag$.
\end{proof}

We will characterize the properties in Definitions \ref{Def_somehow},
\ref{def_oneshot} and \ref{Def_eventual}. The characterization of the property
in Definition \ref{Def_critical} will follow by Proposition \ref{prop} as a
simple corollary.

\begin{remark}
If $X_{0}=X$ and $\tau=\delta=0$, Definitions \ref{def_oneshot} and
\ref{Def_eventual} become trivial since they correspond in that case to an
instantaneous detection of the crossing event, i.e. $H(i)\neq H(j)$, $\forall
i,j$ such that $i\in\Omega$ and $j\notin\Omega$.
\end{remark}

We end this section with two examples. The first is an FSM $M$ which is
eventually $\Omega-diag$, with $\tau=1$, $\delta=1$, but not with $\tau=0$ or
$\delta=0$. The second shows an FSM which is eventually $\Omega-diag$, with
$\tau=1$, $\delta=2$, $\gamma=1$, but not with $\tau=1$, $\delta=2$ and
$\gamma=0$.

\begin{example}
\label{ex1}Let $M=(X,X_{0},Y,H,\Delta)$, $X=X_{0}=\left\{
1,2,3,4,5,6\right\}  $, $Y=\left\{  a,b,c\right\}  $, $H\left(  1\right)
=H\left(  3\right)  =H\left(  5\right)  =a$, $H\left(  2\right)  =H\left(
4\right)  =b$, $H\left(  6\right)  =c$,
\[
\Delta=\left\{  \left(  1,6\right)  ,\left(  2,1\right)  ,\left(  2,3\right)
,\left(  6,2\right)  ,\left(  3,4\right)  ,\left(  4,6\right)  ,\left(
5,4\right)  ,\left(  6,5\right)  \right\}
\]
be represented in Figure \ref{fig4}.%

\begin{figure}[ptb]
\begin{center}
\includegraphics[scale=0.5]{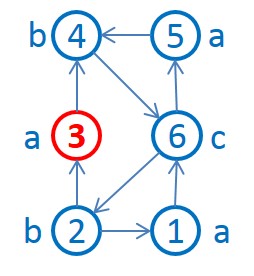}
\end{center}
\caption{FSM $M$ (Example \ref{ex1}).}%
\label{fig4}%
\end{figure}

Let $\Omega=\left\{  3\right\}  $. $M$ is not eventually $\Omega-diag$, with
$\delta=0$. In fact for any state execution ending in state $3$ there is a
state execution ending in state $1$, with the same output string. $M$ is not
eventually $\Omega-diag$, with $\tau=0$. In fact for any state execution
starting from $3$ there is a state execution starting from $5$, with the same
output string. $M$ is eventually $\Omega-diag$, with $\tau=1$, $\delta=1$ and
$\gamma=0$: in fact any output finite string ending with the string $"bab"$
allows the detection of the crossing event and the step at which the crossing occurred.
\end{example}

\begin{example}
\label{ex2}Let $M=(X,X_{0},Y,H,\Delta)$, $X=X_{0}=\left\{
1,2,3,4,5,6,7\right\}  $, $Y=\left\{  a,c\right\}  $, $H\left(  i\right)  =a$,
$i=1...5$, $H\left(  i\right)  =c$, $i=6,7$ and $\Delta$ equal to the set
\[
\left\{  \left(  1,2\right)  ,\left(  2,3\right)  ,\left(  3,6\right)
,\left(  1,4\right)  ,\left(  4,5\right)  ,\left(  5,6\right)  ,\left(
6,6\right)  ,\left(  6,7\right)  ,\left(  7,1\right)  \right\}
\]
be represented in Figure \ref{fig5}.%

\begin{figure}[ptb]
\begin{center}
\includegraphics[scale=0.5]{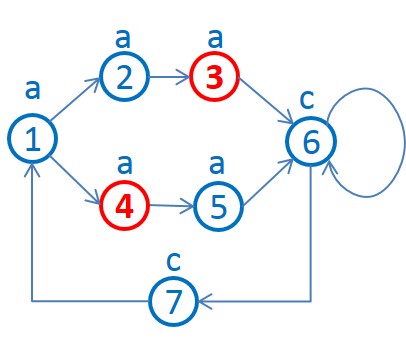}
\end{center}
\caption{FSM $M$ (Example \ref{ex2}).}%
\label{fig5}%
\end{figure}

Let $\Omega=\left\{  3,4\right\}  $. By inspection, we see that $M$ is
eventually $\Omega-diag$, with $\tau=1$, $\delta=2$, $\gamma_{1}=1$ and
$\gamma_{2}=1$. It is not eventually $\Omega-diag$ with $\gamma_{1}=0$ and
$\gamma_{2}=0$. This means that it is possible to detect the crossing event,
but not the step at which the crossing occurred. Note that if $X_{0}=\left\{
1,2,4\right\}  $, then $M$ is eventually $\Omega-diag$, with $\tau=0$,
$\delta=2$, $\gamma_{1}=1$ and $\gamma_{2}=1$, and hence it is critically
$\Omega-diag$.
\end{example}

\section{\label{sec2}Indistinguishability notions}

In what follows, we will assume that the set of outputs does not contain the
null event $\epsilon$.

\begin{description}
\item[Assumption 2:] $\epsilon\notin Y$.
\end{description}

If an FSM $M$ with $\epsilon\in Y$ is such that any cycle has at least a state
$i$ with $H(i)\neq\epsilon$, then we can define an FSM $\widehat{M}$ with
$\epsilon\notin Y$ such that checking the parametric diagnosability property
for $M$ is equivalent to checking the parametric diagnosability property for
$\widehat{M}$. The parameters for which the two properties are satisfied will
in general be different for $M$ and $\widehat{M}$. Details about this
equivalence can be found in the Appendix.

Given the FSM $M=\left(  X,X_{0},Y,H,\Delta\right)  $ and the set $\Omega$,
define the sets
\[
\Pi=\left\{  \left(  i,j\right)  \in X\times X:H(i)=H(j)\right\}
\]
and
\[
\Theta=\left\{  \left(  i,j\right)  \in X\times X:i=j\right\}  \subset\Pi
\]

By definition, the set $\Pi$ and all its subsets are symmetric.

We will refer to the following indistinguishability notions.

\begin{definition}
\label{def_sets} Two state trajectories $x_{1}$ and $x_{2}$ in $\mathcal{X}%
^{\ast}$ are called indistinguishable if $\mathbf{y}\left(  x_{1}\right)
=\mathbf{y}\left(  x_{2}\right)  $. The pair $\left(  i,j\right)  \in\Pi$ is
$k-$forward indistinguishable if there exist $x_{1}\in\mathcal{X}_{\left\{
i\right\}  }$ and $x_{2}\in\mathcal{X}_{\left\{  j\right\}  }$, such that
$\left\vert x_{1}\right\vert =\left\vert x_{2}\right\vert =k$ and
$\mathbf{y}\left(  x_{1}\right)  =\mathbf{y}\left(  x_{2}\right)  $. The pair
$\left(  i,j\right)  \in\Sigma\subset\Pi$ is $k-$backward indistinguishable in
$\Sigma$ if there exist $x_{1}\in\mathcal{X}^{\left\{  i\right\}  }$ and
$x_{2}\in\mathcal{X}^{\left\{  j\right\}  }$, such that $\left\vert
x_{1}\right\vert =\left\vert x_{2}\right\vert =k$, $x_{1}\left(  h\right)
\in\Sigma$, $x_{2}\left(  h\right)  \in\Sigma$, $\forall h\in\left[
1,k\right]  $, and $\mathbf{y}\left(  x_{1}\right)  =\mathbf{y}\left(
x_{2}\right)  $.
\end{definition}

\begin{remark}
Indistinguishability was defined in \cite{Ramadge86} with a different meaning.
By recasting the definitions of \cite{Ramadge86} in our framework, two states
$\ i$ and $j$ were said to be indistinguishable if $\mathcal{X}_{\left\{
i\right\}  }=\mathcal{X}_{\left\{  j\right\}  }$. In the same paper, two
forward indistinguishable states as in Definition \ref{def_sets} were called
\emph{possibly indistinguishable}, while two backward indistinguishable states
were called \emph{possibly indistinguishable} with respect to a
FSM\ associated to the given FSM $M$, called \emph{reverse} FSM
\end{remark}

The following subsets of $\Pi$ will be instrumental in characterizing the
diagnosability properties described in Definitions \ref{Def_somehow},
\ref{def_oneshot}, \ref{Def_eventual} and \ref{Def_critical}.

\begin{itemize}
\item $S^{\ast}\subset\Pi$: set of pairs of states reachable from $X_{0}$ with
two indistinguishable state evolutions

\item $F^{\ast}\subset\Pi$: set of forward indistinguishable pairs of states

\item $B^{\ast}\left(  \Sigma\right)  \subset\Sigma\subset\Pi$: set of
backward indistinguishable pairs of states that belong to a given set $\Sigma$

\item $\Lambda^{\ast}\subset\left(  F^{\ast}\cap S^{\ast}\right)  $: the set
of pairs $\left(  i,j\right)  \in\Pi$, with $i\in\Omega$ and $j\in
\overline{\Omega}$ (or vice-versa $i\in\overline{\Omega}$ and $j\in\Omega$)
for which there exist two indistinguishable infinite state trajectories
starting from $\left\{  i\right\}  $ and $\left\{  j\right\}  $, respectively,
such that the latter is contained in $\overline{\Omega}$ (or vice-versa the
former is contained in $\overline{\Omega}$).

\item $\Gamma^{\ast}\subset B^{\ast}\left(  S^{\ast}\right)  \subset S^{\ast}%
$: set of pairs $\left(  i,j\right)  \in\Pi$, with $i\in\Omega$ and
$j\in\overline{\Omega}$ (or vice-versa $i\in\overline{\Omega}$ and $j\in
\Omega$) for which there exist two indistinguishable finite state trajectories
of arbitrary length ending in $\left\{  i\right\}  $ and in $\left\{
j\right\}  $, respectively, both contained in $S^{\ast}$, such that the latter
is contained in $\overline{\Omega}$ (or vice-versa the former is contained in
$\overline{\Omega}$).
\end{itemize}

Let us give an example of the sets defined above. Because of the simplicity of
the example, such sets can be determined by inspection.

\begin{example}
\label{example_new}Consider the FSM depicted in Figure \ref{fig_new}, with
$X_{0}=\left\{  1,8\right\}  $ and $\Omega=\left\{  1,4\right\}  $%


\begin{figure}[ptb]
\begin{center}
\includegraphics[scale=0.5]{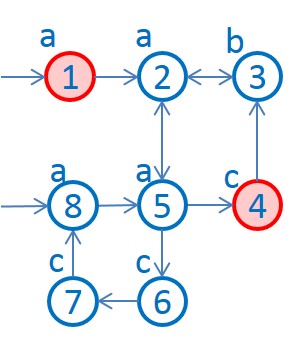}
\end{center}
\caption{FSM $M$ (Example \ref{example_new}).}%
\label{fig_new}%
\end{figure}

It is easily seen that:
\begin{align*}
S^{\ast}  &  =\left\{  \left(  1,8\right)  ,\left(  2,5\right)  ,\left(
4,6\right)  \right\}  ^{-}\cup\Theta\\
F^{\ast}  &  =\left\{  \left(  1,2\right)  ,\left(  1,5\right)  ,\left(
1,8\right)  ,\left(  2,5\right)  ,\left(  2,8\right)  ,\left(  5,8\right)
\right\}  ^{-}\cup\Theta\\
B^{\ast}\left(  S^{\ast}\right)   &  =\left\{  \left(  2,5\right)  ,\left(
4,6\right)  \right\}  ^{-}\cup\left(  \Theta\backslash\left\{  \left(
1,1\right)  \right\}  \right) \\
\Lambda^{\ast}  &  =\left\{  \left(  1,8\right)  \right\}  ^{-}\\
\Gamma^{\ast}  &  =\left\{  \left(  4,6\right)  \right\}  ^{-}%
\end{align*}

\end{example}

In the following subsections, we will formally define the above sets and give
algorithms for their computation. We will prove that the computation of these
sets has polynomial complexity in the state cardinality $\left\vert
X\right\vert $. In fact the sets $S^{\ast}$, $B^{\ast}\left(  \Sigma\right)
$, $F^{\ast}$, $\Gamma^{\ast}$ and $\Lambda^{\ast}$ are computed as fixed
points of appropriate recursions, whose convergence is assured after a number
of steps denoted by $s^{\ast}$, $b^{\ast}$, $f^{\ast}$, $g^{\ast}$ and
$l^{\ast}$, all upper bounded by $\left\vert X\right\vert ^{2}$.

\subsection{The set $S^{\ast}$}

\begin{definition}
\label{def_Sstar}The set $S^{\ast}$ is the maximal set of pairs $\left(
i,j\right)  \in\Pi$ such that there exist two indistinguishable state
executions $x_{1}\in\mathcal{X}^{\left\{  i\right\}  }\cap\mathcal{X}_{X_{0}}$
and $x_{2}\in\mathcal{X}^{\left\{  j\right\}  }\cap\mathcal{X}_{X_{0}}$.
\end{definition}

The pair of states in $S^{\ast}$ are indistinguishable in the sense of
\cite{Wang:2007} and of \cite{Sears:2014}, where algorithms were studied, in
the framework of Mealy automata with partially observable
transitions.\footnote{In \ \cite{Wang:2007} the relation between
indistinguishability and the observability notion introduced in
\cite{Lin:1988} is also established.}

In our framework, the set $S^{\ast}$ can be computed as follows.

Define the recursion, with $k=1,2,...$
\begin{align}
S_{1}  &  =\left(  X_{0}\times X_{0}\right)  \cap\Pi\nonumber\\
S_{k+1}  &  =\left\{  \left(  i,j\right)  \in\Pi:\left(  pre(i)\times
pre(j)\right)  \cap S_{k}\neq\emptyset\right\}  \cup S_{k} \label{recS}%
\end{align}

\begin{lemma}
\label{L_S}Consider the equation (\ref{recS}). Then,

$i)$ the least fixed point of the recursion, containing $\left(  X_{0}\times
X_{0}\right)  \cap\Pi$, exists, is unique and is equal to $S^{\ast}$;

$ii)$ the recursion reaches the fixed point $S^{\ast}$ in at most $s^{\ast
}<\left\vert X\right\vert ^{2}$ steps.
\end{lemma}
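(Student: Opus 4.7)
The plan is to establish (i) by a Knaster--Tarski argument together with a double-inclusion induction that identifies the limit of the recursion with $S^*$, and (ii) by a routine counting bound on the length of the chain.

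For existence and uniqueness in (i), define the operator $F : 2^\Pi \to 2^\Pi$ by $F(S) = \{(i,j) \in \Pi : (pre(i) \times pre(j)) \cap S \neq \emptyset\} \cup S$. This $F$ is monotone on the finite lattice $2^\Pi$, so the iteration $S_{k+1} = F(S_k)$ starting from the seed $S_1 = (X_0 \times X_0) \cap \Pi$ is a nondecreasing chain that stabilizes at some set $\bar S$; $\bar S$ is a fixed point of $F$ containing $S_1$, and any other fixed point containing $S_1$ must by induction contain every $S_k$ and hence $\bar S$, giving uniqueness of the least such fixed point.

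The main content is the identification $\bar S = S^*$, which I prove by two inductions. For $\bar S \subseteq S^*$, I induct on the first $k$ with $(i,j) \in S_k$: the base case $k=1$ is witnessed by the length-one trajectories $x_1 = i$, $x_2 = j$, which lie in $\mathcal{X}_{X_0}$ and are indistinguishable because $i,j \in X_0$ and $H(i) = H(j)$; the inductive step picks $(i',j') \in (pre(i) \times pre(j)) \cap S_k$, invokes the hypothesis for trajectories $x_1', x_2'$ from $X_0$ ending at $i'$ and $j'$, and extends each by one step along the edges $(i',i), (j',j) \in \Delta$, with indistinguishability preserved by $H(i) = H(j)$ together with Assumption 2 (so appending a non-null symbol extends $\mathbf{y}$ by exactly one character on each side). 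For $S^* \subseteq \bar S$, I induct on the common length $n$ of the witnessing trajectories (equal by Assumption 2): for $n=1$ the pair is in $S_1$ directly; for $n \geq 2$, truncating both trajectories by one step witnesses $(x_1(n-1), x_2(n-1)) \in S^*$, which by the hypothesis lies in some $S_m$, and one application of the recursion then places $(i,j) \in S_{m+1} \subseteq \bar S$.

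For (ii), the chain is strictly increasing up to stabilization, so $s^* \leq |\Pi| - |S_1| + 1$. Since $X_0$ is nonempty, the diagonal pair $(x_0, x_0)$ lies in $S_1$, so $|S_1| \geq 1$, and combined with $|\Pi| \leq |X|^2$ this yields the strict bound $s^* < |X|^2$. The only place I expect any real subtlety is preservation of indistinguishability when extending $X_0$-rooted trajectories in the first induction of (i); once Assumption 2 is invoked this is immediate, and the counting in (ii) is routine.
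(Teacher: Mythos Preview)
Your argument is correct and follows the same overall strategy as the paper: a lattice/fixed-point argument for existence and uniqueness of the least fixed point, stabilization of the nondecreasing chain by finiteness, and a counting bound for the number of steps. The paper obtains existence of the least fixed point by observing that $\Pi$ is a fixed point and that intersections of fixed points are again fixed points, whereas you use the equivalent monotonicity/Knaster--Tarski viewpoint; both yield the same object. Where you actually add content is in the identification $\bar S = S^{\ast}$: the paper simply asserts that this ``comes from the maximality of $S^{\ast}$'', while you spell out the two inductions (on the first index $k$ with $(i,j)\in S_k$, and on the common length of the witnessing trajectories), correctly invoking Assumption~2 so that indistinguishable executions have equal length. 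This is a genuine improvement in rigor over the paper's proof, not a different route.

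One minor point in part (ii): from $s^{\ast}\le |\Pi|-|S_1|+1$, $|S_1|\ge 1$, and $|\Pi|\le |X|^2$ you only get $s^{\ast}\le |X|^2$, not the strict inequality you claim. The paper obtains strictness by exploiting the symmetry of $\Pi$ and of every $S_k$, bounding $\hat{k}$ by the number of unordered off-diagonal pairs, namely $|X|(|X|-1)/2$. If you want the strict bound $s^{\ast}<|X|^2$ as stated, either invoke this symmetry argument or note that each strictly increasing step adds a symmetric pair of off-diagonal elements or a diagonal element, and refine the count accordingly.
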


\begin{proof}
The set $\Pi$ is a fixed point of the recursion and the intersection of fixed
points is a fixed point. Therefore the least fixed point containing $\left(
X_{0}\times X_{0}\right)  \cap\Pi$ exists and is unique. Let $\widehat{S}$
denote such a fixed point. Then
\[
\widehat{S}=\left\{  \left(  i,j\right)  \in\Pi:\left(  pre(i)\times
pre(j)\right)  \cap\widehat{S}\neq\emptyset\right\}  \cup\widehat{S}%
\]
and hence $\left\{  \left(  i,j\right)  \in\Pi:\left(  pre(i)\times
pre(j)\right)  \cap\widehat{S}\neq\emptyset\right\}  \subset\widehat{S}$.
Suppose that $S_{k}\subset\widehat{S}$. Then $S_{k+1}$ is a subset of $\left(
\left\{  \left(  i,j\right)  \in\Pi:\left(  pre(i)\times pre(j)\right)
\cap\widehat{S}\neq\emptyset\right\}  \cup S_{k}\right)  \subset\widehat{S}$.
Since $S_{1}\subset\widehat{S}$, then, by induction, $S_{k}\subset\widehat{S}%
$, $\forall k=1,2,...$. If $S_{k+1}=S_{k}$ for some $k$ then $S_{k+i}=S_{k}$,
$\forall i\geq0$, and hence $S_{k}$ is a fixed point. But a finite $k$ such
that $S_{k+1}=S_{k}$ exists because of the finite cardinality of $\Pi$. Let
$\widehat{k}$ be the minimum value of $k$ such that $S_{k+1}=S_{k}$. It is
clear that $\widehat{k}$ is bounded by the number of not ordered pairs in
$\Pi$. Hence $\widehat{k}\leq\frac{\left\vert X\right\vert \left(  \left\vert
X\right\vert -1\right)  }{2}$. Therefore $S_{\widehat{k}}=\widehat{S}$. The
fact that $S^{\ast}=S_{\widehat{k}}$ comes from the maximality of $S^{\ast}$
(see Definition \ref{def_Sstar}). The statements $i)$ and $ii)$ are therefore proved.
\end{proof}

Let $n_{k}=\left\vert S_{k}\right\vert $, $p=\left\vert \Pi\right\vert $ and
$\nu=\max_{i\in N}\left\vert pre\left(  i\right)  \right\vert $. Then at step
$k+1$ the algorithm involves at most $\nu^{2}\left(  p-n_{k}\right)  n_{k}$
elementary computations, where an elementary computation is: given $\left(
i,j\right)  \in\Pi\backslash S_{k}$ and the pair $\left(  i^{\prime}%
,j^{\prime}\right)  \in pre\left(  i\right)  \times pre\left(  j\right)  $
check whether $\left(  i^{\prime},j^{\prime}\right)  \in S_{k}$. Since
$\nu^{2}\left(  p-n_{k}\right)  n_{k}\leq\nu^{2}\left\vert X\right\vert ^{4}$,
then the algorithm will stop after at most $2\nu^{2}\left\vert X\right\vert
^{4}\ln\left\vert X\right\vert $ elementary computations. Hence the spatial
complexity is $O\left(  \left\vert X\right\vert ^{2}\right)  $ and the time
complexity is $O\left(  \left\vert X\right\vert ^{5}\right)  $. Similar
observations on complexity hold for all the algorithms we will describe in the
following sections.

\subsection{The sets $F^{\ast}$ and $B^{\ast}\left(  \Sigma\right)  $}

\begin{definition}
The set $F^{\ast}$ is the maximal set of pairs $\left(  i,j\right)  \in\Pi$
which are $k-$forward indistinguishable, $\forall k\in\mathbb{Z}$, $k\geq1$.
\end{definition}

Define the recursion, with $k=1,2,...$,%
\begin{align}
F_{1}  &  =\Pi\nonumber\\
F_{k+1}  &  =\left\{  \left(  i,j\right)  \in F_{k}:\left(  succ(i)\times
succ(j)\right)  \cap F_{k}\neq\emptyset\right\}  \label{recF}%
\end{align}

\begin{lemma}
\label{LemmaF}Consider equation $\left(  \ref{recF}\right)  $. Then,

$i)$ $F_{k}$ is the set of all $k-$forward indistinguishable pairs;

$ii)$ the maximal fixed point of the recursion, contained in $\Pi$, is unique,
nonempty and is equal to $F^{\ast}$;

$iii)$ the recursion reaches its maximal fixed point $F^{\ast}$ in $f^{\ast
}<\left\vert X\right\vert ^{2}$ steps.
\end{lemma}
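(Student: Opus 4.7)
The plan is to mirror the structure of Lemma~\ref{L_S}: first establish (i) by induction on $k$, then derive (ii) and (iii) from the resulting monotonicity, the finiteness of $\Pi$, and a pigeonhole argument.

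For (i) I would induct on $k$. The base case $F_1=\Pi$ matches $1$-forward indistinguishability, since under Assumption~2 the singleton executions $x_1=i$, $x_2=j$ have matching outputs exactly when $(i,j)\in\Pi$. Two facts are worth recording up front: $F_k\subset\Pi$ for all $k$ (the recursion only deletes elements), and $k$-forward indistinguishability is antitone in $k$ (truncate a witness). Assuming $F_k$ equals the set of $k$-forward indistinguishable pairs, for the $\subset$ direction I take $(i,j)\in F_{k+1}$, pick $(i',j')\in(succ(i)\times succ(j))\cap F_k$, and prepend $i,j$ to length-$k$ witnesses supplied by the induction hypothesis; the outputs agree at the first position because $(i,j)\in F_k\subset\Pi$. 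The $\supset$ direction uses a length-$(k+1)$ witness $(x_1,x_2)$ from $(i,j)$: its length-$k$ prefix places $(i,j)$ in $F_k$ by IH, and its length-$k$ suffix starting at $(x_1(2),x_2(2))$ shows $(succ(i)\times succ(j))\cap F_k\neq\emptyset$, so $(i,j)\in F_{k+1}$.

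For (ii) I would first note that $\Theta\subset F_k$ for all $k$: by liveness (Assumption~1), every state $i$ admits a length-$k$ execution, so the choice $x_1=x_2$ witnesses $(i,i)$, giving nonemptiness of $F^*$. To show $F^*$ is itself a fixed point I fix $(i,j)\in F^*$ and need to produce a single $(i',j')\in succ(i)\times succ(j)$ lying in $F^*$. For each $k\geq 1$, the tail of some length-$(k+1)$ witness from $(i,j)$ places an element of $succ(i)\times succ(j)$ into $F_k$; since $succ(i)\times succ(j)$ is finite, pigeonhole yields one $(i',j')$ belonging to $F_k$ for arbitrarily large $k$, and by antitonicity this $(i',j')$ then lies in every $F_k$, i.e.\ in $F^*$. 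Maximality is easy: any fixed point $\widehat F\subset\Pi$ generates witnesses of arbitrary length by chaining its own successors, so $\widehat F\subset F^*$; uniqueness of the maximal fixed point is then immediate.

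For (iii), the recursion is nonincreasing by construction and bounded below by $\Theta$, so each strict step $F_{k+1}\subsetneq F_k$ removes at least one element of $\Pi\setminus\Theta$, giving $f^*\leq|\Pi\setminus\Theta|\leq|X|(|X|-1)<|X|^2$. At the first $k$ with $F_{k+1}=F_k$, the set $F_k$ is a fixed point and so $F_k\subset F^*$ by (ii); the reverse inclusion $F^*\subset F_k$ follows by induction on $k$ from $F^*\subset\Pi$ and the fixed-point property of $F^*$, so $F_{f^*}=F^*$. The main obstacle is the pigeonhole step in (ii): one must resist making a $k$-dependent choice of successor and instead extract a single pair $(i',j')$ that works uniformly in $k$.
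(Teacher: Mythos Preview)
Your proof is correct. Part (i) is more detailed than the paper's one-line ``true by definition of $k$-forward indistinguishable pairs'', and part (iii) is essentially the same argument as the paper's.

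For (ii) you take a genuinely different route. The paper argues abstractly: by liveness $\Theta$ is a fixed point, and the union of fixed points contained in $\Pi$ is again a fixed point, so the maximal fixed point $\widehat{F}$ exists, is unique, and is nonempty; an easy induction gives $\widehat{F}\subset F_k$ for every $k$; since the sequence $(F_k)$ is nonincreasing in a finite set it stabilizes at some $F_{\widehat{k}}$, which is trivially a fixed point, whence $\widehat{F}\subset F_{\widehat{k}}\subset\widehat{F}$; finally $F^{\ast}=F_{\widehat{k}}$ follows from (i). This completely bypasses the pigeonhole step you flagged as the main obstacle: the paper never needs to verify directly that $\bigcap_k F_k$ is a fixed point, because it instead shows that the stabilized value $F_{\widehat{k}}$ coincides with both the abstract maximal fixed point and with $\bigcap_k F_k$. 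Your approach is more hands-on, working directly with $F^{\ast}=\bigcap_k F_k$ and extracting a uniform successor pair $(i',j')\in F^{\ast}$ by pigeonhole; it is perfectly valid, but the paper's order-theoretic detour is slightly slicker precisely because it sidesteps that extraction.
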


\begin{proof}
Statement $i)$ is true by definition of $k-$forward indistinguishable pairs.
Because of liveness assumption, the set $\Theta$ is a fixed point of the
recursion defined in equation (\ref{recF}), contained in $\Pi$. The union of
fixed points in $\Pi$ is a fixed point in $\Pi$ and therefore the maximal
fixed point of the recursion, contained in $\Pi$, is unique and nonempty. Let
$\widehat{F}$ be such fixed point. Then $\forall\left(  i,j\right)
\in\widehat{F}$, $\left(  succ(i)\times succ(j)\right)  \cap\widehat{F}%
\neq\emptyset$. Let us suppose that $\widehat{F}\subset F_{k}$. Then
$\widehat{F}\subset F_{k+1}$. Since $\widehat{F}\subset F_{1}$, then, by
induction, $\widehat{F}\subset F_{k}$, $\forall k=1,2,...$. Moreover
$F_{k+1}\subset F_{k}$, $\forall k=1,2,...$. If $F_{k+1}\subset F_{k}$ for
some $k$ then $F_{k+i}=F_{k}$, $\forall i\geq0$, and hence $F_{k}$ is a fixed
point. But a finite $k$ such that $F_{k+1}=F_{k}$ exists because of the finite
cardinality of $\Pi$. Let $\widehat{k}$ be the minimum value of $k$ such that
$F_{k+1}=F_{k}$. Then $\widehat{F}\subset F_{\widehat{k}}\subset\widehat{F}$
and hence $F_{\widehat{k}}=\widehat{F}$. It is clear that $\widehat{k}$ is
bounded by the number of not ordered pairs in $\Pi$. Hence $\widehat{k}%
\leq\frac{\left\vert X\right\vert \left(  \left\vert X\right\vert -1\right)
}{2}$. The fact that $F^{\ast}=F_{\widehat{k}}$ comes from the definition of
the set $F^{\ast}$. The statements $ii)$ and $iii)$ are therefore proved.
\end{proof}

In a similar way, given $\Sigma\subset\Pi$, we can introduce the following

\begin{definition}
The set $B^{\ast}\left(  \Sigma\right)  $ is the maximal set of pairs $\left(
i,j\right)  \in\Sigma$ which are $k-$backward indistinguishable in $\Sigma$,
$\forall k\in\mathbb{Z}$, $k\geq1$.
\end{definition}

Define the recursion, with $k=1,2,...$,
\begin{align}
B_{1}(\Sigma)  &  =\Sigma\label{recB}\\
B_{k+1}(\Sigma)  &  =\left\{  \left(  i,j\right)  \in B_{k}(\Sigma):\left(
pre(i)\times pre(j)\right)  \cap B_{k}(\Sigma)\neq\emptyset\right\} \nonumber
\end{align}

\begin{lemma}
\label{LemmaB}Consider equation $\left(  \ref{recB}\right)  $. Then,

$i)$ $B_{k}(\Sigma)$ is the set of all $k-$backward indistinguishable pairs in
$\Sigma$;

$ii)$ if $B^{\ast}\left(  \Sigma\right)  \neq\emptyset$, then the maximal
fixed point of the recursion (\ref{recB}), contained in $\Sigma$, is unique,
nonempty and is equal to $B^{\ast}\left(  \Sigma\right)  $. Otherwise $\exists
k<\left\vert X\right\vert ^{2}$ such that $B_{k}(\Sigma)=\emptyset$;

$iii)$ If $B^{\ast}\left(  \Sigma\right)  \neq\emptyset$, the recursion
reaches its maximal fixed point in $b^{\ast}<\left\vert X\right\vert ^{2}$ steps.
\end{lemma}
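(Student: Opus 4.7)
The proof closely parallels that of Lemma~\ref{LemmaF}, with one substantive adjustment: there is no analogue of the liveness assumption for predecessors, so the diagonal $\Theta$ is not automatically a fixed point and the recursion \eqref{recB} may well collapse to $\emptyset$. I would therefore handle (i) exactly as in the forward case and then split the analysis of (ii)--(iii) into the empty and nonempty cases.

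For (i), I would induct on $k$. Reading ``$x_1(h)\in\Sigma$, $x_2(h)\in\Sigma$'' in Definition~\ref{def_sets} as the pair $(x_1(h),x_2(h))\in\Sigma$, the base $k=1$ is immediate since $B_1(\Sigma)=\Sigma$ and the singletons $x_1=i$, $x_2=j$ witness $1$-backward indistinguishability of any $(i,j)\in\Sigma\subset\Pi$. For the inductive step, given $(i,j)\in B_{k+1}(\Sigma)$, I would pick a predecessor pair $(i',j')\in(pre(i)\times pre(j))\cap B_k(\Sigma)$ supplied by the recursion, invoke the inductive hypothesis to obtain length-$k$ witnesses $x'_1,x'_2$ ending at $i',j'$, and append $i,j$ respectively; the appended pair $(i,j)$ lies in $\Sigma$ because $B_{k+1}(\Sigma)\subset B_k(\Sigma)\subset\Sigma$, and outputs agree because $(i,j)\in\Pi$. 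Conversely, from any length-$(k+1)$ witness of $(k+1)$-backward indistinguishability of $(i,j)$, the length-$k$ prefix exposes a pair in $(pre(i)\times pre(j))\cap B_k(\Sigma)$ via the inductive hypothesis, while the length-$k$ suffix obtained by dropping the first symbol shows that $(i,j)$ itself lies in $B_k(\Sigma)$; together these force $(i,j)\in B_{k+1}(\Sigma)$.

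For (ii), the recursion \eqref{recB} is monotone non-increasing by construction, so the finite sequence $B_k(\Sigma)\subset\Sigma$ stabilizes at some smallest index $\hat k$. By (i), $B^{*}(\Sigma)=\bigcap_{k\ge 1}B_k(\Sigma)=B_{\hat k}(\Sigma)$. If this stable set is empty, the second clause of (ii) is obtained immediately; otherwise $B_{\hat k}(\Sigma)$ is a nonempty fixed point of \eqref{recB} contained in $\Sigma$. Maximality and uniqueness then follow by precisely the induction used in Lemma~\ref{LemmaF}: for any fixed point $\hat B\subset\Sigma$ of \eqref{recB}, the fixed-point property promotes $\hat B\subset B_k(\Sigma)$ to $\hat B\subset B_{k+1}(\Sigma)$, and $\hat B\subset B_1(\Sigma)=\Sigma$ initiates the induction, yielding $\hat B\subset B^{*}(\Sigma)$.

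For (iii), the strict-monotonicity argument of Lemma~\ref{LemmaF} applies verbatim: before stabilization each step removes at least one pair, so $\hat k$ is bounded by the number of pairs in $\Pi$, hence by $|X|(|X|-1)/2<|X|^{2}$; the same bound covers the empty case. I anticipate no serious obstacle in carrying this out; the only mildly delicate point is the interpretation of the abbreviated membership ``$x_\cdot(h)\in\Sigma$'' in Definition~\ref{def_sets}, which once settled makes the inductive bookkeeping in (i) routine.
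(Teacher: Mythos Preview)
Your proposal is correct and follows essentially the same route as the paper: part (i) is established by unfolding the recursion against Definition~\ref{def_sets}, and parts (ii)--(iii) rest on the same induction showing that every fixed point $\hat B\subset\Sigma$ satisfies $\hat B\subset B_k(\Sigma)$ for all $k$, together with the finite-cardinality stabilization bound $|X|(|X|-1)/2$. Your treatment of (i) is in fact more detailed than the paper's (which dispatches it in one line ``by definition''), and your reading of the shorthand ``$x_1(h)\in\Sigma$, $x_2(h)\in\Sigma$'' as $(x_1(h),x_2(h))\in\Sigma$ is indeed the interpretation that matches the recursion~\eqref{recB}; the only minor organizational difference is that the paper first invokes the closure-under-union of fixed points to name the maximal fixed point $\hat B$ and then identifies it with $B_{\hat k}(\Sigma)$, whereas you stabilize first and then show maximality directly---but the core argument is identical.
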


\begin{proof}
Statement $i)$ is true by definition of $k-$backward indistinguishable pairs,
since $\Sigma\subset\Pi$. $ii)$ Suppose that $B^{\ast}\left(  \Sigma\right)
\neq\emptyset$. Then $B^{\ast}\left(  \Sigma\right)  \subset B_{k}(\Sigma)$,
$\forall k=1,2,...$. and is a fixed point of the recursion defined in equation
(\ref{recB}). The union of fixed points in $\Sigma$ is a fixed point and
therefore the maximal fixed point of the recursion, contained in $\Sigma$, is
nonempty and is unique. Let $\widehat{B}$ be such fixed point. Then
$\forall\left(  i,j\right)  \in\widehat{B}$, $\left(  pre(i)\times
pre(j)\right)  \cap\widehat{B}\neq\emptyset$. Let us suppose that
$\widehat{B}\subset B_{k}(\Sigma)$. Then $\widehat{B}\subset B_{k+1}(\Sigma)$.
Since $\widehat{B}\subset B_{1}(\Sigma)$, then, by induction, $\widehat{B}%
\subset B_{k}(\Sigma)$, $\forall k=1,2,...$. Moreover $B_{k+1}(\Sigma)\subset
B_{k}(\Sigma)$, $\forall k=1,2,...$. If $B_{k+1}(\Sigma)\subset B_{k}(\Sigma)$
for some $k$ then $B_{k+i}(\Sigma)=B_{k}(\Sigma)$, $\forall i\geq0$, and hence
$B_{k}(\Sigma)$ is a fixed point. But a finite $k$ such that $B_{k+1}%
(\Sigma)=B_{k}(\Sigma)$ exists because of the finite cardinality of $\Pi$. Let
$\widehat{k}$ be the minimum value of $k$ such that $B_{k+1}(\Sigma
)=B_{k}(\Sigma)$. Since $\widehat{B}\subset B_{\widehat{k}}(\Sigma
)\subset\widehat{B}$, then $B_{\widehat{k}}(\Sigma)=\widehat{B}$. It is clear
that $\widehat{k}$ is bounded by the number of not ordered pairs in $\Pi$.
Hence $\widehat{k}\leq\frac{\left\vert X\right\vert \left(  \left\vert
X\right\vert -1\right)  }{2}$. The fact that $B^{\ast}\left(  \Sigma\right)
=B_{b^{\ast}}(\Sigma)$ comes from the definition of the set $B^{\ast}\left(
\Sigma\right)  $. If $B^{\ast}\left(  \Sigma\right)  =\emptyset$, then by
definition of the recursion (\ref{recB}) there exists $k$ such that
$B_{k}(\Sigma)=\emptyset$. The statements $ii)$ and $iii)$ are therefore proved.
\end{proof}

\subsection{The sets $\Lambda^{\ast}$ and $\Gamma^{\ast}$}

Given $S^{\ast}$ and $\Omega\subset X$, we now define the sets $\Lambda_{k}$
and $\Lambda^{\ast}$ that are subsets of $F_{k}$ and $F^{\ast}$, i.e. the sets
of forward indistinguishable pairs, for finite and infinite steps, respectively.

\begin{definition}
$\Lambda_{k}$ is the set of pairs $\left(  i,j\right)  \in S^{\ast}$, with
$i\in\Omega$ and $j\in\overline{\Omega}$ (or vice-versa $i\in\overline{\Omega
}$ and $j\in\Omega$) for which there exist two indistinguishable executions
$x_{1}\in\mathcal{X}_{\left\{  i\right\}  }$ and $x_{2}\in\mathcal{X}%
_{\left\{  j\right\}  }$, $\left\vert x_{1}\right\vert =\left\vert
x_{2}\right\vert =k$, such that $x_{2}\left(  h\right)  \in\overline{\Omega}$,
$\forall h\in\left[  1,k\right]  $ ($x_{1}\left(  h\right)  \in\overline
{\Omega}$, $\forall h\in\left[  1,k\right]  $, respectively). $\Lambda^{\ast}$
is the set of pairs $\left(  i,j\right)  \in S^{\ast}$ such that
\[
\forall\overline{k}\in\mathbb{Z},\exists k\geq\overline{k}:\left(  i,j\right)
\in\Lambda_{k}%
\]

\end{definition}

The sets $\Lambda_{k}$ and $\Lambda^{\ast}$ can be computed by defining the
recursion, $k=1,2,...$
\begin{align}
\Psi_{1}  &  =\left(  X\times\overline{\Omega}\right)  \cap S^{\ast
}\nonumber\\
\Psi_{k+1}  &  =\left\{  \left(  i,j\right)  \in\Psi_{k}:\left(  succ(i)\times
succ(j)\right)  \cap\Psi_{k}\neq\emptyset\right\}  \label{rec_Lambda}%
\end{align}

In fact, we can prove the following:

\begin{lemma}
\label{L_Lambda}Consider equation (\ref{rec_Lambda}). Then,

$i)$ $\Lambda_{k}=\left(  \Psi_{k}\cap\left(  \Omega\times\overline{\Omega
}\right)  \right)  ^{-}$;

$ii)$ If $\Psi_{k}\neq\emptyset$, $\forall k=1,2,...$, the maximal fixed point
$\Psi^{\ast}$ of the recursion defined in $\left(  \ref{rec_Lambda}\right)  $,
contained in $X\times\overline{\Omega}$, is nonempty and unique. Otherwise
$\exists k<\left\vert X\right\vert ^{2}$ such that $\Psi_{k}=\emptyset$ and
$\Psi^{\ast}=\emptyset$;

$iii)$ If $\Psi^{\ast}\neq\emptyset$ the recursion defined in $\left(
\ref{rec_Lambda}\right)  $ reaches this maximal fixed point in $l^{\ast
}<\left\vert X\right\vert ^{2}$ steps;

$iv)$ $\Lambda^{\ast}=\left(  \Psi^{\ast}\cap\left(  \Omega\times
\overline{\Omega}\right)  \right)  ^{-}$.
\end{lemma}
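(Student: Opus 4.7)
The plan is to prove the four claims in order, using induction on $k$ for $(i)$, then mirroring the fixed-point arguments of Lemmas \ref{LemmaF} and \ref{LemmaB} for $(ii)$ and $(iii)$, and finally deducing $(iv)$ from the convergence of the recursion together with the definition of $\Lambda^{\ast}$.

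For $(i)$, I would proceed by induction on $k$. In the base case $k=1$, a pair $(i,j) \in \Psi_{1} \cap (\Omega \times \overline{\Omega})$ has $i \in \Omega$, $j \in \overline{\Omega}$, and $(i,j) \in S^{\ast} \subset \Pi$; the singleton executions $x_{1}=i$, $x_{2}=j$ are then indistinguishable and the one starting at $j$ lies in $\overline{\Omega}$, which matches the definition of $\Lambda_{1}$ (after taking symmetric closure to account for the ``or vice-versa'' case). For the step from $k$ to $k+1$, I would take $(i,j)\in \Psi_{k+1} \cap (\Omega \times \overline{\Omega})$, pick a pair $(i',j') \in \Psi_{k} \cap (succ(i)\times succ(j))$, apply the induction hypothesis to obtain indistinguishable executions of length $k$ from $i'$ and $j'$ with the $j'$-trajectory in $\overline{\Omega}$, and prepend $i$ and $j$. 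For the converse, stripping the first symbol of length-$(k+1)$ witnessing executions yields tails that witness $(x_{1}(2),x_{2}(2)) \in \Psi_{k}$ (note that this pair is still in $S^{\ast}$ since the original pair is, and $S^{\ast}$ propagates forward via transitions preserving the same output). The symmetric closure takes care of the ``or vice-versa'' clause in the definition of $\Lambda_{k}$.

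For $(ii)$ and $(iii)$, the arguments are essentially identical to those for $B^{\ast}(\Sigma)$ in Lemma \ref{LemmaB}. By construction $\Psi_{k+1} \subset \Psi_{k} \subset (X\times\overline{\Omega}) \cap S^{\ast}$, so the sequence is a nonincreasing chain inside a finite set. Either $\Psi_{k}=\emptyset$ for some $k$, and then all subsequent $\Psi_{k'}$ are empty so $\Psi^{\ast}=\emptyset$, or the chain stabilizes at a nonempty set. The union of fixed points of $(\ref{rec_Lambda})$ contained in $(X\times\overline{\Omega}) \cap S^{\ast}$ is again a fixed point, giving existence, uniqueness and maximality of $\Psi^{\ast}$; and the stabilization index $l^{\ast}$ is bounded by the number of unordered pairs in $\Pi$, hence $l^{\ast} < |X|^{2}$.

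For $(iv)$, I combine $(i)$--$(iii)$ with the definition of $\Lambda^{\ast}$. Since $\{\Psi_{k}\}$ is nonincreasing and equals $\Psi^{\ast}$ for all $k \geq l^{\ast}$, the sequence $\{\Lambda_{k}\}$ is likewise eventually constant and equal to $(\Psi^{\ast} \cap (\Omega\times\overline{\Omega}))^{-}$. Hence a pair $(i,j)\in S^{\ast}$ belongs to $\Lambda_{k}$ for arbitrarily large $k$ if and only if it belongs to $\Lambda_{l^{\ast}}$, yielding the claimed identity. The main obstacle is the bookkeeping in $(i)$: one must carefully handle the asymmetry of the definition (only the trajectory from $j$, or symmetrically from $i$, is required to stay in $\overline{\Omega}$) and verify that at each step the extracted sub- and super-pairs remain in $S^{\ast}$, since membership in $S^{\ast}$ is the ambient constraint that makes the recursion well-posed.
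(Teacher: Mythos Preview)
Your overall strategy matches the paper's: it too observes that recursion $(\ref{rec_Lambda})$ is structurally the same as $(\ref{recF})$ and therefore borrows the fixed-point arguments from Lemma~\ref{LemmaF}, deducing $(iv)$ from the definition of $\Lambda^{\ast}$. Parts $(ii)$--$(iv)$ of your proposal are fine.

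There is, however, a genuine gap in your inductive step for $(i)$. Your induction hypothesis is the identity $\Lambda_{k}=(\Psi_{k}\cap(\Omega\times\overline{\Omega}))^{-}$, which only tells you something about pairs whose \emph{first} coordinate lies in $\Omega$ (or symmetrically). In the step you pick $(i',j')\in\Psi_{k}\cap(succ(i)\times succ(j))$ and ``apply the induction hypothesis'' to get length-$k$ witnessing trajectories from $(i',j')$; but nothing forces $i'\in\Omega$, so $(i',j')$ need not lie in $\Omega\times\overline{\Omega}$ and your hypothesis does not apply to it. What you actually need is a characterization of $\Psi_{k}$ itself: $\Psi_{k}$ is exactly the set of pairs $(i,j)\in S^{\ast}\cap(X\times\overline{\Omega})$ admitting indistinguishable executions $x_{1}\in\mathcal{X}_{\{i\}}$, $x_{2}\in\mathcal{X}_{\{j\}}$ of length $k$ with $x_{2}(h)\in\overline{\Omega}$ for all $h$. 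This is what the paper establishes (by noting the recursion is that of $F_{k}$ with a different initialization), and once you have it, $(i)$ is immediate by intersecting with $\Omega\times\overline{\Omega}$ and taking symmetric closure. The fix is thus to strengthen the inductive claim from $\Lambda_{k}$ to $\Psi_{k}$; your prepending/stripping argument then goes through verbatim, and the $S^{\ast}$ bookkeeping you flagged is indeed handled by the forward closure of $S^{\ast}$ under output-preserving transitions.
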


\begin{proof}
The recursion defined in equation (\ref{rec_Lambda}), up to the
initialization, is identical to recursion defined in (\ref{recF}). Therefore
$\Psi_{k}$ is the set of $k-$forward indistinguishable pairs $\left(
i,j\right)  $ for which there exists two indistinguishable state trajectories
$x_{1}\in\mathcal{X}_{(i)}$ and $x_{2}\in\mathcal{X}_{(j)}$, with $\left\vert
x_{1}\right\vert =\left\vert x_{2}\right\vert =k$, such that $x_{2}\left(
h\right)  \in\overline{\Omega}$, $\forall h=1...k$. Therefore statement $i)$
is true by definition of $\Lambda_{k}$. By using the same arguments as in the
proof of Lemma \ref{LemmaF}, the maximal fixed point $\Psi^{\ast}$ of the
recursion \ref{rec_Lambda}, contained in $X\times\overline{\Omega}$ is unique.
However it could be equal to the emptyset. If $\Psi^{\ast}\neq\emptyset$, then
again by using the same arguments as in the proof of Lemma \ref{LemmaF}, there
exists $\widehat{k}<\left\vert X\right\vert ^{2}$ such that $\Psi
_{\widehat{k}+1}=\Psi_{\widehat{k}}$ and hence statements $ii)$ and $iii)$
hold. If $\Psi^{\ast}=\emptyset$, then $\Psi_{\widetilde{k}}=\emptyset$, for
some $\widetilde{k}<\left\vert X\right\vert ^{2}$, and again statements $ii)$
and $iii)$ hold. The last statement comes from the definition of
$\Lambda^{\ast}$.
\end{proof}

It can be easily verified that:
\begin{align}
\Lambda_{1}  &  =\left(  \left(  \Omega\times\overline{\Omega}\right)
\cup\left(  \Omega\times\overline{\Omega}\right)  \right)  \cap S^{\ast
}\nonumber\\
\Lambda_{k+1}  &  \subset\Lambda_{k}\subset\left(  F_{k}\cap S^{\ast}\right)
\nonumber\\
\Lambda^{\ast}  &  =\bigcap\limits_{k\in\mathbb{Z}}\Lambda_{k}\subset\left(
F^{\ast}\cap S^{\ast}\right)  \label{g}%
\end{align}

The sets $\Gamma_{k}$ and $\Gamma^{\ast}$ take into account the "backward
executions" of the FSM. In fact they are subsets of $B_{k}\left(  S^{\ast
}\right)  $ and of $B^{\ast}\left(  S^{\ast}\right)  $, respectively, and are
defined as follows:

\begin{definition}
$\Gamma_{k}$ is the set of pairs $\left(  i,j\right)  \in S^{\ast}$, with
$i\in\Omega$ and $j\in\overline{\Omega}$ (or vice-versa $i\in\overline{\Omega
}$ and $j\in\Omega$) for which there exist two indistinguishable executions
$x_{1}\in\mathcal{X}^{\left\{  i\right\}  }$ and $x_{2}\in\mathcal{X}%
^{\left\{  j\right\}  }$, $\left\vert x_{1}\right\vert =\left\vert
x_{2}\right\vert =k$, such that $\left(  x_{1}\left(  h\right)  ,x_{2}\left(
h\right)  \right)  \in S^{\ast}\cap\left(  X\times\overline{\Omega}\right)  $,
$\forall h\in\left[  1,k\right]  $ (vice-versa $\left(  x_{1}\left(  h\right)
,x_{2}\left(  h\right)  \right)  \in S^{\ast}\cap\left(  \overline{\Omega
}\times X\right)  $ $\forall h\in\left[  1,k\right]  $, respectively).
$\Gamma^{\ast}$ is the set of pairs $\left(  i,j\right)  \in S^{\ast}$ such
that
\[
\forall\overline{k}\in\mathbb{Z},\exists k\geq\overline{k}:\left(  i,j\right)
\in\Gamma_{k}%
\]

\end{definition}

Define the recursion, with $k=1,2,...$
\begin{align}
\Xi_{1}  &  =\left(  X\times\overline{\Omega}\right)  \cap S^{\ast
}\label{rec_Gamma}\\
\Xi_{k+1}  &  =\left\{  \left(  i,j\right)  \in\Xi_{k}:\left(  prec(i)\times
prec(j)\right)  \cap\Xi_{k}\neq\emptyset\right\} \nonumber
\end{align}

\begin{lemma}
\label{L_Gamma}Consider equation (\ref{rec_Gamma}). Then,

$i)$ $\Gamma_{k}=\left(  \Xi_{k}\cap\left(  \Omega\times\overline{\Omega
}\right)  \right)  ^{-}$

$ii)$ If $\Xi_{k}\neq\emptyset$ $\forall k$, then the maximal fixed point
$\Xi^{\ast}$ of the recursion, contained in $\left(  X\times\overline{\Omega
}\right)  \cap S^{\ast}$, is unique and nonempty. Otherwise $\exists
k<\left\vert X\right\vert ^{2}$ such that $\Xi_{k}=\emptyset$ and $\Xi^{\ast
}=\emptyset$.

$iii)$ If $\Xi^{\ast}\neq\emptyset$, the recursion reaches this maximal
\ fixed point in $g^{\ast}<\left\vert X\right\vert ^{2}$ steps

$iv)$ $\Gamma^{\ast}=\left(  \Xi^{\ast}\cap\left(  \Omega\times\overline
{\Omega}\right)  \right)  ^{-}$.
\end{lemma}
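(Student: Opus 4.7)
The plan is to mirror the strategy used in the proofs of Lemmas \ref{LemmaB} and \ref{L_Lambda}, since the recursion $(\ref{rec_Gamma})$ is, up to its initialization on $(X \times \overline{\Omega}) \cap S^{\ast}$, identical in form to the backward recursion $(\ref{recB})$. For part $(i)$, I would proceed by induction on $k$, maintaining the invariant that $\Xi_k$ is exactly the set of pairs $(i,j)$ for which there exist indistinguishable executions $x_1 \in \mathcal{X}^{\{i\}}$, $x_2 \in \mathcal{X}^{\{j\}}$ of common length $k$ with $(x_1(h), x_2(h)) \in S^{\ast} \cap (X \times \overline{\Omega})$ for every $h \in [1,k]$. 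The base case $k=1$ is immediate: length-$1$ executions are singletons, indistinguishability reduces to $H(i)=H(j)$ (which holds because $S^{\ast} \subset \Pi$), and the pair condition is exactly $\Xi_1$. For the inductive step, given $(i,j) \in \Xi_{k+1}$, I pick $(i',j') \in (pre(i) \times pre(j)) \cap \Xi_k$, use the induction hypothesis to obtain length-$k$ executions ending at $(i',j')$ with the desired property, and append $i,j$ to form length-$(k+1)$ executions ending at $(i,j)$, whose final pair lies in $S^{\ast} \cap (X \times \overline{\Omega})$ because $(i,j) \in \Xi_k$. Conversely, given such length-$(k+1)$ executions, truncating the first symbol produces length-$k$ executions ending at $(i,j)$ with the required property, so $(i,j) \in \Xi_k$ by induction, while $(x_1(k), x_2(k))$ provides an element of $(pre(i) \times pre(j)) \cap \Xi_k$. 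Intersecting the invariant with $\Omega \times \overline{\Omega}$ and symmetrizing then reproduces the definition of $\Gamma_k$.

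For parts $(ii)$ and $(iii)$, I would observe that the recursion is monotonically nonincreasing, $\Xi_{k+1} \subset \Xi_k$, by construction. Since $\Xi_1$ is contained in the finite set $\Pi$, whose cardinality counted up to symmetry is bounded by $|X|(|X|-1)/2$, the sequence must stabilize in strictly fewer than $|X|^2$ steps: either some $\Xi_k$ equals $\emptyset$, giving $\Xi^{\ast} = \emptyset$; or the limit is a nonempty fixed point, which must be the unique maximal fixed point $\Xi^{\ast}$ contained in $(X \times \overline{\Omega}) \cap S^{\ast}$, since the union of fixed points contained in a given set is again a fixed point contained in that set (the argument used verbatim in Lemmas \ref{LemmaB} and \ref{L_Lambda}).

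Finally, part $(iv)$ follows from parts $(i)$ and $(iii)$: the sequence $\Gamma_k$ is nonincreasing by $(i)$, so the definition of $\Gamma^{\ast}$ as pairs belonging to $\Gamma_k$ for arbitrarily large $k$ is equivalent to $\bigcap_{k \geq 1} \Gamma_k$, which equals $\bigl(\bigl(\bigcap_k \Xi_k\bigr) \cap (\Omega \times \overline{\Omega})\bigr)^{-} = (\Xi^{\ast} \cap (\Omega \times \overline{\Omega}))^{-}$. The main subtlety I expect is in the inductive invariant of $(i)$: requiring every intermediate pair to lie in $S^{\ast}$ (not merely in $X \times \overline{\Omega}$) is precisely what forces the recursion step to intersect with $\Xi_k$ rather than with some larger set, and it relies on the characterization of $S^{\ast}$ provided by Lemma \ref{L_S}.
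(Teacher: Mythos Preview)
Your proposal is correct and follows essentially the same approach as the paper: the paper's proof simply observes that recursion $(\ref{rec_Gamma})$ coincides with $(\ref{recB})$ up to initialization and then invokes the arguments of Lemma~\ref{LemmaB} verbatim, whereas you spell out the induction for part~$(i)$ and the monotone-stabilization argument for parts~$(ii)$--$(iv)$ explicitly. One minor slip: $\Xi_k \subset X \times \overline{\Omega}$ is not symmetric, so the ``up to symmetry'' count $|X|(|X|-1)/2$ does not literally apply, but the crude bound $|\Xi_1| < |X|^2$ suffices for the stabilization claim.
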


\begin{proof}
The recursion defined in equation (\ref{rec_Gamma}), up to the initialization,
is identical to the recursion defined in (\ref{recB}). Therefore $\Xi_{k}$ is
the set of $k-$backward indistinguishable pairs $\left(  i,j\right)  $ for
which there exists two indistinguishable state trajectories $x_{1}%
\in\mathcal{X}^{(i)}$ and $x_{2}\in\mathcal{X}^{(j)}$, with $\left\vert
x_{1}\right\vert =\left\vert x_{2}\right\vert =k$, such that $x_{2}\left(
h\right)  \in\overline{\Omega}$, $\forall h=1...k$. Therefore statement $i)$
is true by definition of $\Gamma_{k}$. By using the same arguments as in the
proof of Lemma \ref{LemmaB}, the maximal fixed point $\Xi^{\ast}$ of the
recursion (\ref{rec_Gamma}), contained in $\left(  X\times\overline{\Omega
}\right)  \cap S^{\ast}$ is unique. However it could be equal to the emptyset.
If $\Xi^{\ast}\neq\emptyset$, then again by using the same arguments as in the
proof of Lemma \ref{LemmaB}, there exists $\widehat{k}<\left\vert X\right\vert
^{2}$ such that $\Xi_{\widehat{k}+1}=\Xi_{\widehat{k}}$ and hence statements
$ii)$ and $iii)$ hold. If $\Xi^{\ast}=\emptyset$, then $\Xi_{\widetilde{k}%
}=\emptyset$, for some $\widetilde{k}<\left\vert X\right\vert ^{2}$, and
therefore statements $ii)$ and $iii)$ hold. The last statement comes from the
definition of $\Gamma^{\ast}$.
\end{proof}

It can be easily verified that:
\begin{align}
\Gamma_{1}  &  =\left(  \left(  \Omega\times\overline{\Omega}\right)
\cup\left(  \Omega\times\overline{\Omega}\right)  \right)  \cap S^{\ast
}\nonumber\\
\Gamma_{k+1}  &  \subset\Gamma_{k}\subset B_{k}\left(  S^{\ast}\right)
\nonumber\\
\Gamma^{\ast}  &  =\bigcap\limits_{k\in\mathbb{Z}}\Gamma_{k}\subset B^{\ast
}\left(  S^{\ast}\right)  \subset S^{\ast} \label{f}%
\end{align}

\section{\label{sec3}Main results}

In this section we characterize the properties introduced in Definitions
\ref{Def_somehow}, \ref{def_oneshot}, \ref{Def_eventual} and
\ref{Def_critical}. We first derive necessary and sufficient conditions for
each of those properties to hold. Then, some equivalent conditions are given
in terms of simple set inclusions depending on the existence of some suitable
parameters. The values of these parameters allow the computation of an upper
bound for the delay of the diagnosis, and of a lower bound for the uncertainty
radius of the diagnosis.

Given the FSM $M=(X,X_{0},Y,H,\Delta)$, define the FSM $\widetilde{M}%
=(X,X_{0},Y,H,\widetilde{\Delta})$, where $\left(  i,j\right)  \in
\widetilde{\Delta}$ if and only if $\left(  i,j\right)  \in\Delta$ and
$i\notin\Omega$. Let $\widetilde{S}^{\ast}$ be the set of pairs reachable from
$X_{0}$ with two indistinguishable state evolutions, computed for
$\widetilde{M}$. Obviously $\widetilde{S}^{\ast}\subset S^{\ast}$.

The FSM $\widetilde{M}$ is not alive in general. However, as pointed out in
Section \ref{sec2}, the algorithm for the computation of $S^{\ast}$ does not
depend on the liveness assumption, so the set \ $\widetilde{S}^{\ast}$ can be
computed by means of the same algorithm as $S^{\ast}$. As an example, consider
the FSM represented in Figure \ref{fig_nodiag}. The FSM $\widetilde{M}$ is
represented in Figure \ref{tilde_nodiag}.%


\begin{figure}[ptb]
\begin{center}
\includegraphics[scale=0.5]{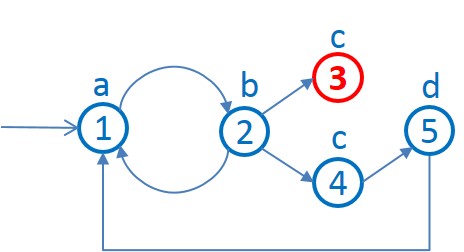}
\end{center}
\caption{The FSM\ $\protect\widetilde{M}$, corresponding to FSM\ $M$ in Fig. \ref{fig_nodiag}.}%
\label{tilde_nodiag}%
\end{figure}

The finite nonnegative values $b^{\ast}$, $\widetilde{b}^{\ast}$, $f^{\ast}%
$,$g^{\ast}$ and $l^{\ast}$ are well defined:
\begin{align*}
b^{\ast}  &  =\min b:B^{\ast}(S^{\ast})=B_{b}(S^{\ast})\\
\widetilde{b}^{\ast}  &  =\min b:B^{\ast}(\widetilde{S}^{\ast})=B_{b}%
(\widetilde{S}^{\ast})\\
f^{\ast}  &  =\min f:F^{\ast}=F_{f}\\
g^{\ast}  &  =\min g:\Gamma^{\ast}=\Gamma_{g}\\
l^{\ast}  &  =\min l:\Lambda^{\ast}=\Lambda_{l}%
\end{align*}

\subsection{Parametric $\Omega-$ diagnosability\label{sub1}}

Consider the set $B^{\ast}\left(  \widetilde{S}^{\ast}\right)  \cap
\Lambda^{\ast}$. On the basis of the definition of the sets given in the
previous section, a pair of states $\left(  i,j\right)  $ in the set $B^{\ast
}\left(  \widetilde{S}^{\ast}\right)  \cap\Lambda^{\ast}$ is such that only
one of the two states $i$ or $j$ belongs to $\Omega$. Such\ $i$ and $j$ are
the ending states of a pair of arbitrarily long indistinguishable state
executions of the system $\widetilde{M}$ (which are also executions of $M$)
with initial states in $X_{0}$, with one of these executions which never
crosses the set $\Omega$. Moreover, $i$ and $j$ are the initial states of a
pair of arbitrarily long indistinguishable state executions of the system $M$,
with one of these executions which never crosses the set $\Omega$. Therefore,
given these executions, however long the transient and the delay are, and
however loose is the required accuracy, it will not be possible to decide
whether the critical set $\Omega$ has been crossed or not. As a consequence,
bearing in mind the definition of $\widetilde{M}$ and the equivalence between
parametric $\Omega-diag$ and parametric $\Omega-diag$ with $T=0$, we can
establish the following necessary and sufficient condition for an FSM to be
parametrically $\Omega-diag$.

\begin{theorem}
\label{general_as}$M$ is parametrically $\Omega-diag$ if and only if \
\begin{equation}
B^{\ast}\left(  \widetilde{S}^{\ast}\right)  \cap\Lambda^{\ast}=\emptyset
\label{gen_exist}%
\end{equation}

\end{theorem}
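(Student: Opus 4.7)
The plan is to prove the biconditional by treating the two directions with complementary techniques: for $(\Rightarrow)$ I will explicitly construct, from any pair in the intersection, a violating execution pair for arbitrary parameters; for $(\Leftarrow)$ I will extract, from an arbitrary failure, a pair in the intersection by pigeonhole over counterexamples with growing parameters. Throughout I will use the reduction to $T=0$ explained in the paragraph preceding the theorem, so that it suffices to examine the step $k=k_{x}$.

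For $(\Leftarrow)$, suppose $M$ is not parametrically $\Omega$-diag. Then for every $n$, setting $\tau=\delta=\gamma_{1}=\gamma_{2}=n$ and $T=0$, there is a pair $(x^{(n)},\hat{x}^{(n)})\in\mathcal{X}^{2}$ with $k_{x^{(n)}}\geq n+1$, indistinguishable up to step $k_{x^{(n)}}+n$, and with $\hat{x}^{(n)}$ avoiding $\Omega$ throughout $[k_{x^{(n)}}-n,\,k_{x^{(n)}}+n]$. By minimality of $k_{x^{(n)}}$, the prefix $x^{(n)}|_{[1,k_{x^{(n)}}]}$ is already an $\widetilde{M}$-execution from $X_{0}$. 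Pigeonholing over $n$ on the pair $(x^{(n)}(k_{x^{(n)}}),\hat{x}^{(n)}(k_{x^{(n)}}))$ isolates a pivot $(i^{\ast},j^{\ast})\in\Omega\times\overline{\Omega}$. A further pigeonhole on the states along the backward and forward segments, combined with a compactness argument on the finite product space $X\times X$, then yields (a) arbitrarily long backward indistinguishable $\widetilde{M}$-executions from $X_{0}$ ending at $(i^{\ast},j^{\ast})$ with the $\hat{x}$-side in $\overline{\Omega}$, placing $(i^{\ast},j^{\ast})\in B^{\ast}(\widetilde{S}^{\ast})$, and (b) infinite forward indistinguishable executions from $(i^{\ast},j^{\ast})$ with the $\hat{x}$-side in $\overline{\Omega}$, placing $(i^{\ast},j^{\ast})\in\Lambda^{\ast}$.

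For $(\Rightarrow)$, fix $(i^{\ast},j^{\ast})\in B^{\ast}(\widetilde{S}^{\ast})\cap\Lambda^{\ast}$ with, WLOG, $i^{\ast}\in\Omega$ and $j^{\ast}\notin\Omega$, and arbitrary candidate parameters $\tau,\delta,T=0,\gamma_{1},\gamma_{2}$. I construct a violating pair $(x,\hat{x})$ by splicing. Choose $N>\tau+\gamma_{1}+\gamma_{2}$. From $(i^{\ast},j^{\ast})\in B^{\ast}(\widetilde{S}^{\ast})$ take backward indistinguishable trajectories $\alpha_{1},\alpha_{2}$ in $M$ of length $N$ ending at $(i^{\ast},j^{\ast})$ with all pairs $(\alpha_{1}(h),\alpha_{2}(h))\in\widetilde{S}^{\ast}$; from the $\widetilde{S}^{\ast}$-membership of $(\alpha_{1}(1),\alpha_{2}(1))$ take indistinguishable $\widetilde{M}$-prefixes $\pi_{1},\pi_{2}$ from $X_{0}$ reaching that pair; from $(i^{\ast},j^{\ast})\in\Lambda^{\ast}$ take infinite indistinguishable continuations $\beta_{1},\beta_{2}$ with $\beta_{2}\subset\overline{\Omega}$. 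The concatenations $x=\pi_{1}\cdot\alpha_{1}\cdot\beta_{1}$ and $\hat{x}=\pi_{2}\cdot\alpha_{2}\cdot\beta_{2}$ belong to $\mathcal{X}$ and are indistinguishable. Provided $N$ is large enough and $\alpha_{2}\subset\overline{\Omega}$, the first $\Omega$-crossing of $x$ lands at some step $k_{x}\geq\tau+1$ while $\hat{x}$ has no $\Omega$-visit in $[k_{x}-\gamma_{1},\,k_{x}+\gamma_{2}]$, contradicting parametric diagnosability with those parameters.

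The main obstacle, on which both directions hinge, is the requirement that the $\hat{x}$-side of the constructed or extracted pair be in $\overline{\Omega}$ throughout the relevant segment, not merely at $j^{\ast}$. In $(\Rightarrow)$ this is what forces $\alpha_{2}\subset\overline{\Omega}$; in $(\Leftarrow)$ it is what validates $(i^{\ast},j^{\ast})\in\widetilde{S}^{\ast}$. My plan is to leverage the structural property of $\widetilde{M}$—no outgoing transitions from $\Omega$—to show that whenever a pair $(p,q)\in\widetilde{S}^{\ast}$ satisfies $q\notin\Omega$, the witnessing indistinguishable $\widetilde{M}$-executions from $X_{0}$ have $q$-side entirely in $\overline{\Omega}$, since any $\widetilde{M}$-execution ending in $\overline{\Omega}$ cannot have passed through $\Omega$. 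Combining this observation with the fixed-point characterizations of $B^{\ast}$ should then yield the required trajectories in both directions.
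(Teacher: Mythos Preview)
Your overall architecture—contrapositive with explicit witness construction for necessity and pigeonhole extraction for sufficiency—is sound in spirit, but both directions contain a gap at exactly the obstacle you flagged, and your proposed fix does not close it.

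In $(\Rightarrow)$ you take the backward segments $\alpha_{1},\alpha_{2}$ ``in $M$'' from $B^{\ast}(\widetilde{S}^{\ast})$ and then require both $\alpha_{2}\subset\overline{\Omega}$ (so that $\widehat{x}$ avoids $\Omega$) and $\alpha_{1}|_{[1,N-1]}\subset\overline{\Omega}$ (so that $k_{x}$ actually lands at the position of $i^{\ast}$ and hence exceeds $\tau$). Neither inclusion follows from the mere fact that each pair $(\alpha_{1}(h),\alpha_{2}(h))$ lies in $\widetilde{S}^{\ast}$: a pair can sit in $\widetilde{S}^{\ast}$ with either coordinate in $\Omega$ (as an endpoint of an $\widetilde{M}$-path), and the $M$-backward step you take next need not be an $\widetilde{M}$-step. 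Your fix—that an $\widetilde{M}$-execution ending outside $\Omega$ never visits $\Omega$—concerns the prefixes $\pi_{1},\pi_{2}$, not the $M$-trajectories $\alpha_{1},\alpha_{2}$, so it does not deliver $\alpha_{2}\subset\overline{\Omega}$. (Concretely: take $X=\{1,2,3,4\}$, $X_{0}=\{1,3\}$, $\Omega=\{2\}$, all outputs equal, and $M$ the two $2$-cycles $1\leftrightarrow 2$, $3\leftrightarrow 4$. Here $(2,4)$ lies in the $M$-computed $B^{\ast}(\widetilde{S}^{\ast})\cap\Lambda^{\ast}$, yet $M$ \emph{is} parametrically $\Omega$-diag with $\tau=2$; your construction would loop $\alpha_{1}$ through $2\in\Omega$ and never produce $k_{x}>\tau$.) The resolution the paper relies on is that the backward witnesses coming from $B^{\ast}(\widetilde{S}^{\ast})$ are in fact $\widetilde{M}$-trajectories, not arbitrary $M$-trajectories; once that is in place, the structural property you cite applies to $\alpha_{1},\alpha_{2}$ directly and both inclusions are automatic.

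In $(\Leftarrow)$ the same issue bites from the other side. Setting $\gamma_{1}=n$ only forces $\widehat{x}^{(n)}$ to avoid $\Omega$ on $[k_{x^{(n)}}-n,\,k_{x^{(n)}}+n]$; since $k_{x^{(n)}}$ may exceed $n+1$, the earlier portion $\widehat{x}^{(n)}|_{[1,\,k_{x^{(n)}}-n-1]}$ may well pass through $\Omega$. Then $\widehat{x}^{(n)}|_{[1,k_{x^{(n)}}]}$ is not an $\widetilde{M}$-execution, the pair $(i^{\ast},j^{\ast})$ need not lie in $\widetilde{S}^{\ast}$, and your claim (a) placing it in $B^{\ast}(\widetilde{S}^{\ast})$ fails. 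The paper sidesteps this entirely by arguing the sufficiency direction \emph{directly}: it reads off explicit parameters $\tau=\widetilde{b}^{\ast}$, $\delta=l^{\ast}-1$ from the fixed-point indices, so no pigeonhole or compactness is needed and the $\overline{\Omega}$-issue never arises.
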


\begin{proof}
\textbf{Sufficiency: }By definition of $\widetilde{M}$ and of $\widetilde{S}%
^{\ast}$, the set
\[
\left\{  i\in\Omega:\left(  i,j\right)  \in B^{\ast}\left(  \widetilde{S}%
^{\ast}\right)  ,i\neq j\right\}
\]
describes the set of all states $i$ in $\Omega$, such that for any
$k\geq\widetilde{b}^{\ast}$ there exists a state execution $x\in
\mathcal{X}_{X_{0}}$, with finite $k_{x}\geq k$, $x(k_{x})=i$, but the value
of the state $x(k_{x})$ cannot be reconstructed, knowing the output evolution
up to step $k_{x}$. If $B^{\ast}\left(  \widetilde{S}^{\ast}\right)
=\emptyset$, then there exists $k^{\prime}\geq\widetilde{b}^{\ast}$ such that
any execution in $\mathcal{X}$ is such that either $k_{x}=\infty$ or
$k_{x}<k^{\prime}$. But $B^{\ast}\left(  \widetilde{S}^{\ast}\right)
=B_{\widetilde{b}^{\ast}}(\widetilde{S}^{\ast})$, and therefore $k^{\prime
}=\widetilde{b}^{\ast}$. Hence the condition in Definition \ref{Def_somehow}
is satisfied with $T=0$, $\tau=\widetilde{b}^{\ast}$ and $\delta=0$. By
definition of $\Lambda^{\ast}$, if $\Lambda^{\ast}=\emptyset$ then given a
pair $\left(  i,j\right)  \in\Omega\times\overline{\Omega}$, any state
execution $x\in\mathcal{X}_{\left\{  j\right\}  ,\infty}$ is such that
$x(h)\in\Omega$, for some $h\in\left[  1,l^{\ast}\right]  $. Therefore if
$\Lambda^{\ast}=\emptyset$ the condition in Definition \ref{Def_somehow} is
satisfied with $T=0$, $\tau=0$, $\delta=l^{\ast}-1$, $\gamma_{1}=\gamma
_{2}=l^{\ast}-1$. Finally, if $B^{\ast}\left(  \widetilde{S}^{\ast}\right)
\cap\Lambda^{\ast}=\emptyset$, then the condition in Definition
\ref{Def_somehow} is satisfied with $T=0$, $\tau=\widetilde{b}^{\ast}$,
$\delta=l^{\ast}-1$, $\gamma_{1}=\gamma_{2}=\widetilde{l}^{\ast}-1$ and the
proof of the sufficiency is complete. \textbf{Necessity}: By Definition
\ref{Def_somehow} $M$ is parametrically $\Omega-diag$ only if it is so with
$T=0$. Suppose that $B^{\ast}\left(  \widetilde{S}^{\ast}\right)  \cap
\Lambda^{\ast}\neq\emptyset$. Then since $\Lambda^{\ast}\subset F^{\ast}$,
$B^{\ast}\left(  \widetilde{S}^{\ast}\right)  \cap F^{\ast}$ is not a subset
of $\overline{\Lambda^{\ast}}$. Hence there exists $\left(  i,j\right)  \in
B^{\ast}\left(  \widetilde{S}^{\ast}\right)  \cap F^{\ast}$ which belongs to
$\Lambda^{\ast}$. Then, by definition of $B^{\ast}\left(  \widetilde{S}^{\ast
}\right)  $ and of $\Lambda^{\ast}$ it is not possible to decide about the
crossing of the set $\Omega$ with any delay. Hence $M$ is not parametrically
$\Omega-diag$, with $T=0$. Hence it is not parametrically $\Omega-diag$.
\end{proof}

As a consequence of the result above, the following equivalent condition can
be obtained

\begin{corollary}
\label{th_general}If there exist $b\in\left[  1,\widetilde{b}^{\ast}\right]
$, $f\in\left[  1,f^{\ast}\right]  $ and $l\in\left[  1,l^{\ast}\right]  $
such that \
\begin{equation}
\left(  B_{b}\left(  \widetilde{S}^{\ast}\right)  \cap F_{f}\right)
\subset\overline{\Lambda_{l}} \label{general}%
\end{equation}
then $M$ is parametrically $\Omega-diag$ with $T=0$, $\tau=b-1$, $\delta
=\max\left\{  f,l\right\}  -1$, $\gamma_{1}=\gamma_{2}=l-1$. Conversely, if
$M$ is parametrically $\Omega-diag$, then there exist $b\in\left[
1,\widetilde{b}^{\ast}\right]  $, $f\in\left[  1,f^{\ast}\right]  $ and
$l\in\left[  1,l^{\ast}\right]  $ such that inclusion (\ref{general}) holds.
\end{corollary}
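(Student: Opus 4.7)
My plan is to handle the two directions separately, using Theorem \ref{general_as} together with the monotonicity and finite convergence of the recursions that define $B_b(\widetilde S^{\ast})$, $F_f$, and $\Lambda_l$.

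For the converse (necessity) direction, I would start from the parametric $\Omega$-diagnosability hypothesis and apply Theorem \ref{general_as} to obtain $B^{\ast}(\widetilde S^{\ast}) \cap \Lambda^{\ast} = \emptyset$. I would then pick the extremal parameters $b = \widetilde b^{\ast}$, $f = f^{\ast}$, $l = l^{\ast}$; at these indices the recursions have stabilized, so $B_b(\widetilde S^{\ast}) = B^{\ast}(\widetilde S^{\ast})$, $F_f = F^{\ast}$ and $\Lambda_l = \Lambda^{\ast}$. Using the inclusion $\Lambda^{\ast} \subset F^{\ast}$ recorded in \eqref{g}, the elementary chain $B^{\ast} \cap F^{\ast} \cap \Lambda^{\ast} \subset B^{\ast} \cap \Lambda^{\ast} = \emptyset$ gives the desired $(B_b(\widetilde S^{\ast}) \cap F_f) \subset \overline{\Lambda_l}$.

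For the forward (sufficiency) direction, the plan is a direct proof by contradiction that translates the failure of the target parametric property into membership of a witness pair in the forbidden intersection. Take $x \in \mathcal{X}$ with finite $k_x \geq b$ (cases with $k_x < b = \tau+1$ are vacuous) and suppose some $\hat x \in \mathbf{y}^{-1}(\mathbf{y}(x|_{[1, k_x + \delta]}))$ satisfies $\hat x(h) \notin \Omega$ for every $h \in [\max\{1, k_x - l + 1\}, k_x + l - 1]$. I claim that the pair $i = x(k_x) \in \Omega$, $j = \hat x(k_x) \in \overline\Omega$ lies in $B_b(\widetilde S^{\ast}) \cap F_f \cap \Lambda_l$. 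Membership in $F_f$ comes from the matched forward executions $x|_{[k_x, k_x+f-1]}$, $\hat x|_{[k_x, k_x+f-1]}$ (available since $\delta \geq f - 1$); membership in $\Lambda_l$ comes from $(i,j) \in S^{\ast}$, witnessed by $x|_{[1,k_x]}, \hat x|_{[1,k_x]}$, together with matched length-$l$ forward executions with the one from $j$ staying in $\overline\Omega$ by the supposition; membership in $B_b(\widetilde S^{\ast})$ comes from the backward extensions $x|_{[k_x-b+1, k_x]}$, $\hat x|_{[k_x-b+1, k_x]}$, each intermediate pair lying in $\widetilde S^{\ast}$ because both partial trajectories from $X_0$ avoid $\Omega$ and have matching output. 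Contradiction with the hypothesis.

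The main obstacle I foresee is making the $B_b(\widetilde S^{\ast})$ step airtight when $\hat x$ has an $\Omega$-crossing strictly before the window, since then $\hat x|_{[1,h]}$ is not a trajectory of $\widetilde M$ and the argument that $(x(h), \hat x(h)) \in \widetilde S^{\ast}$ breaks. I would address this by swapping the roles of $x$ and $\hat x$: let $\hat k_{\hat x} \leq k_x - l$ be the first crossing of $\hat x$, and repeat the construction for the pair $(\hat x(\hat k_{\hat x}), x(\hat k_{\hat x}))$, which enjoys $\hat x(\hat k_{\hat x}) \in \Omega$, $x(\hat k_{\hat x}) \notin \Omega$, and admits backward trajectories in $\widetilde M$ because neither $\hat x$ nor $x$ has an earlier crossing; this symmetric argument works whenever $\hat k_{\hat x} \geq b$, while the residual case $\hat k_{\hat x} < b$ falls into the initial transient and has to be excluded by a separate argument exploiting the structural asymmetry of $X_0$ and the $\widetilde M$-reachability condition built into $\widetilde S^{\ast}$.
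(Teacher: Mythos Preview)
Your converse direction is exactly what the paper does: take $b=\widetilde b^{\ast}$, $f=f^{\ast}$, $l=l^{\ast}$, use $\Lambda^{\ast}\subset F^{\ast}$, and read off the inclusion from Theorem~\ref{general_as}.

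For the forward direction, the paper proceeds quite differently and much more economically. It does \emph{not} build a witness pair in $B_b(\widetilde S^{\ast})\cap F_f\cap\Lambda_l$ by contradiction. Instead it simply uses the monotonicity of the three recursions:
\[
\bigl(B^{\ast}(\widetilde S^{\ast})\cap F^{\ast}\bigr)\ \subset\ \bigl(B_b(\widetilde S^{\ast})\cap F_f\bigr)\ \subset\ \overline{\Lambda_l}\ \subset\ \overline{\Lambda^{\ast}},
\]
which, together with $\Lambda^{\ast}\subset F^{\ast}\cap S^{\ast}$ and $\widetilde S^{\ast}\subset S^{\ast}$, is equivalent to the hypothesis of Theorem~\ref{general_as}. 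Parametric $\Omega$-diagnosability then follows immediately; the explicit parameter values are asserted to be ``obvious'' and are argued separately in the informal discussion following the corollary. So the paper's route completely bypasses the case analysis you set up.

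Your direct route does aim at something the paper's formal proof does not actually supply, namely a self-contained verification of the \emph{specific} parameters $\tau=b-1$, $\delta=\max\{f,l\}-1$, $\gamma_1=\gamma_2=l-1$. But the residual case you flag is a genuine gap, and your proposed resolution does not close it. When $k_x\geq b$ but $\hat k_{\hat x}<b$, saying that this ``falls into the initial transient'' is not right: the definition only exempts trajectories $x$ with $k_x\leq\tau$, and here $k_x\geq b=\tau+1$, so the crossing of $x$ must still be detected. The difficulty is precisely that $\hat x|_{[1,k_x]}$ is not an execution of $\widetilde M$, so you cannot certify $(x(k_x),\hat x(k_x))\in\widetilde S^{\ast}$, let alone $B_b(\widetilde S^{\ast})$, using $x$ and $\hat x$ as witnesses; and after swapping, the pair $(\hat x(\hat k_{\hat x}),x(\hat k_{\hat x}))$ lacks length-$b$ backward witnesses because $\hat k_{\hat x}<b$. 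The vague appeal to ``structural asymmetry of $X_0$'' and ``$\widetilde M$-reachability'' does not supply the missing argument. If you want to keep a direct proof, you would need an additional mechanism to produce alternative backward witnesses inside $\widetilde S^{\ast}$ in this case; otherwise, adopt the paper's monotonicity reduction for the existence claim and treat the parameter sharpening as a separate argument along the lines of the paper's post-corollary discussion.
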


\begin{proof}
Since $\Lambda^{\ast}\subset\left(  F^{\ast}\cap S^{\ast}\right)  $, then
$B^{\ast}\left(  \widetilde{S}^{\ast}\right)  \cap\Lambda^{\ast}=\emptyset$ if
and only if $B^{\ast}\left(  \widetilde{S}^{\ast}\right)  \cap\Lambda^{\ast
}\cap F^{\ast}\cap S^{\ast}=\emptyset$, which is equivalent to write $B^{\ast
}\left(  \widetilde{S}^{\ast}\right)  \cap F^{\ast}\cap S^{\ast}%
\subset\overline{\Lambda^{\ast}}$. Since $\widetilde{S}^{\ast}\subset S^{\ast
}$, then the condition \ref{gen_exist} is equivalent to the inclusion%
\[
\left(  B^{\ast}\left(  \widetilde{S}^{\ast}\right)  \cap F^{\ast}\right)
\subset\overline{\Lambda^{\ast}}%
\]
Moreover, $\left(  B^{\ast}\left(  \widetilde{S}^{\ast}\right)  \cap F^{\ast
}\right)  \subset\left(  B_{b}\left(  \widetilde{S}^{\ast}\right)  \cap
F_{f}\right)  $, $\forall b\in\left[  1,\widetilde{b}^{\ast}\right]  $,
$\forall f\in\left[  1,f^{\ast}\right]  $ and $\Lambda^{\ast}\subset
\Lambda_{l}$, $\forall l\in\left[  1,l^{\ast}\right]  $. Therefore, if for
some $b\in\left[  1,\widetilde{b}^{\ast}\right]  $, for some $f\in\left[
1,f^{\ast}\right]  $ and for some $l\in\left[  1,l^{\ast}\right]  $ the
inclusion \ref{general} holds, then we can write%
\[
\left(  B^{\ast}\left(  \widetilde{S}^{\ast}\right)  \cap F^{\ast}\right)
\subset\left(  B_{b}\left(  \widetilde{S}^{\ast}\right)  \cap F_{f}\right)
\subset\overline{\Lambda_{l}}\subset\overline{\Lambda^{\ast}}%
\]
and hence the first statement comes from the sufficient part of Theorem
\ref{general_as}. The evaluation of the parameters $T$, $\tau$, $\delta$,
$\gamma_{1}$ and $\gamma_{2}$ is obvious. The second statement is
straightforward from the necessity part of the same Theorem \ref{general_as}.
\end{proof}

In the statement of the previous corollary, the parameters $b$, $f$ and $l$
appear explicitly. Note that $\Lambda^{\ast}\subset F^{\ast}$ but in general
$\Lambda_{l}$ is not a subset of $F_{f}$, for $l\neq l^{\ast}$ or $f\neq
f^{\ast}$. Therefore the inclusion $\left(  \ref{general}\right)  $ defines
the set of all the values $b$, $f$ and $l$ such that $M$ is parametrically
$\Omega-diag$. Given this set, an upper bound for the delay and a lower bound
for the uncertainty radius can be evaluated. Let us explain how this can be
done, in particular how the condition $\left(  \ref{general}\right)  $ allows
the determination of the delay of the diagnosis of the crossing event, of the
uncertainty about the time at which the event occurred and of the duration of
the transient where the diagnosis is not possible or not required. This
description gives also the tools for the design of the online diagnoser.

Suppose $\left(  \ref{general}\right)  $ holds for some $b$, $f$ and $l$.
Given an infinite execution $x$ and the output string up to current step
$k\geq b+\max\left\{  f,l\right\}  -1$, let $\widehat{x}\left(  k\right)
\in2^{X}$ be the set of discrete states at step $k-(\max\left\{  f,l\right\}
-1)$\ that are compatible with the observations up to step $k$. Suppose that
$k_{x}\geq b$. Then at $k=k_{x}+\max\left\{  f,l\right\}  -1$, $\widehat{x}%
\left(  k\right)  \cap\Omega\neq\emptyset$. If $\widehat{x}\left(  k\right)
\subset\Omega$, then we can deduce that the set $\Omega$ was crossed at step
$k_{x}=k-(\max\left\{  f,l\right\}  -1)$. Otherwise suppose that
$\widehat{x}\left(  k\right)  =\left\{  i,j,h\right\}  $, with only the state
$i$ belonging to $\Omega$. Then each pair $\left(  i,j\right)  $, $\left(
i,h\right)  $ and $\left(  j,h\right)  $ belongs to $\left(  B_{b}\left(
\widetilde{S}^{\ast}\right)  \cap F_{f}\right)  $, because they have not been
distinguished at step $k$. Since the inclusion $\left(  \ref{general}\right)
$ holds, then at step $k$ we are sure that the actual execution $x$ of $M$ is
such that $x(h)\in\Omega$, for some $h\in\left[  k_{x},k_{x}+l-1\right]  $.
Suppose that $b>1$ and $k_{x}\leq b-1$. Since $B_{k_{x}}\left(  \widetilde{S}%
^{\ast}\right)  $ is not in general a subset of $B_{b}\left(  \widetilde{S}%
^{\ast}\right)  $, then condition $\left(  \ref{general}\right)  $ does not
allow the detection of the crossing event, based on the information available
at step $k_{x}+\max\left\{  f,l\right\}  -1$. Hence detection of the first
crossing event occurs with a maximum delay $\delta=\max\left\{  f,l\right\}
-1$, with uncertainty $\gamma=l-1$ and with $\tau=b-1$. \ Therefore the system
is parametrically diagnosable with $T=0$, $\delta=\max\left\{  f,l\right\}
-1$, $\gamma=l-1$ and $\tau=b-1$. Finally, since $B_{1}\left(  \widetilde{S}%
^{\ast}\right)  =\widetilde{S}^{\ast}$, $F_{1}=\Pi$, $\Lambda_{1}=\left(
\left(  \Omega\times\overline{\Omega}\right)  \cup\left(  \overline{\Omega
}\times\Omega\right)  \right)  $, then if $\left(  \ref{general}\right)  $
holds in the very special case of $b=f=l=1$, then $\widetilde{S}^{\ast}%
\subset\left(  \left(  \Omega\times\Omega\right)  \cup\left(  \overline
{\Omega}\times\overline{\Omega}\right)  \right)  $, and detection occurs with
a maximum delay $\delta=0$, with uncertainty $\gamma=0$ and with $\tau=0$.

\subsection{\label{omega_diag}$\Omega-$diagnosability}

Consider now $\Omega-$diagnosability as defined in Definition
\ref{def_oneshot}.

Consider the set $\widetilde{S}^{\ast}\cap\Lambda^{\ast}$. By similar
reasoning as in the previous subsection, the set $\widetilde{S}^{\ast}%
\cap\Lambda^{\ast}$ is the set of pairs $\left(  i,j\right)  $, where only one
of the two states $i$ or $j$ belongs to $\Omega$, which are the ending states
of a pair of indistinguishable state executions of the system $\widetilde{M}$,
with initial state in $X_{0}$, such that one of these executions never crosses
the set $\Omega$, and are the initial states of a pair of arbitrarily long
indistinguishable state executions of the system $M$, such that one of them
never crosses $\Omega$. Therefore, recalling the definition of the system
$\widetilde{M}$, we can prove the following:

\begin{theorem}
\label{Th_LaF_exist}$M$ is $\Omega-diag$ if and only if \
\begin{equation}
\widetilde{S}^{\ast}\cap\Lambda^{\ast}=\emptyset\label{asymp_oneshot}%
\end{equation}

\end{theorem}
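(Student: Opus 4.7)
The plan is to mirror the proof of Theorem~\ref{general_as}, with $B^{\ast}(\widetilde{S}^{\ast})$ replaced by the larger set $\widetilde{S}^{\ast}$. This reflects that $\Omega$-diagnosability of Definition~\ref{def_oneshot} corresponds to the strict case $\tau=0$ of parametric diagnosability, so no initial transient may be skipped; the stronger requirement on $\tau$ translates into ruling out the whole $\widetilde{S}^{\ast}$ rather than just its backward-stable subset.

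For necessity I argue the contrapositive. Assume $(i,j) \in \widetilde{S}^{\ast} \cap \Lambda^{\ast}$ with, say, $i \in \Omega$ and $j \in \overline{\Omega}$. By definition of $\widetilde{S}^{\ast}$ there exist indistinguishable finite executions $x_1' \in \mathcal{X}^{\{i\}} \cap \mathcal{X}_{X_0}$ and $x_2' \in \mathcal{X}^{\{j\}} \cap \mathcal{X}_{X_0}$ in $\widetilde{M}$, so the intermediate states of $x_1'$ and all states of $x_2'$ lie in $\overline{\Omega}$. By Lemma~\ref{L_Lambda} and the decreasing character of $\Lambda_k$, the pair $(i,j)$ lies in $\Lambda_k$ for every $k$, so for any candidate $\delta$ I take indistinguishable length-$(\delta+1)$ extensions from $i$ and $j$ with the $j$-extension entirely in $\overline{\Omega}$. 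Concatenating and then completing by liveness, I obtain $x \in \mathcal{X}$ with $k_x = |x_1'|$ and a finite $\widehat{x} \in \mathbf{y}^{-1}\bigl(\mathbf{y}(x|_{[1,k_x+\delta]})\bigr)$ none of whose states lies in $\Omega$. Since this construction works for every $\delta$, no uniform choice of $(\delta,\gamma_1,\gamma_2)$ can make the window $[\max\{1, k_x - \gamma_1\}, k_x + \gamma_2]$ contain a crossing of $\widehat{x}$, so $M$ fails to be $\Omega$-diag.

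For sufficiency I propose $\delta = \gamma_1 = \gamma_2 = l^{\ast} - 1$, using $\Lambda_{l^{\ast}} = \Lambda^{\ast}$. Fix $x \in \mathcal{X}$ with finite $k_x$ and any $\widehat{x} \in \mathbf{y}^{-1}\bigl(\mathbf{y}(x|_{[1,k_x+\delta]})\bigr)$; by Assumption~2, $|\widehat{x}| = k_x + \delta$. Assume for contradiction that $\widehat{x}(h) \notin \Omega$ for every $h$ in the window, and let $k_{\widehat{x}}$ denote the first crossing step of $\widehat{x}$ in $[1, k_x + \delta]$, if any. Since $\gamma_2 = \delta$, only two cases remain: (a) no such $k_{\widehat{x}}$ exists; (b) $k_{\widehat{x}} < k_x - \gamma_1$. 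In case~(a), $x|_{[1,k_x]}$ and $\widehat{x}|_{[1,k_x]}$ both lie in $\widetilde{M}$, placing $(x(k_x), \widehat{x}(k_x)) \in \widetilde{S}^{\ast}$, while the indistinguishable length-$l^{\ast}$ continuations $x|_{[k_x, k_x+\delta]}$ and $\widehat{x}|_{[k_x, k_x+\delta]}$, with the second entirely in $\overline{\Omega}$, place the same pair in $\Lambda_{l^{\ast}} = \Lambda^{\ast}$, contradicting the hypothesis. In case~(b) the roles of $x$ and $\widehat{x}$ are swapped: prefixes up to step $k_{\widehat{x}}$ live in $\widetilde{M}$, so $(\widehat{x}(k_{\widehat{x}}), x(k_{\widehat{x}})) \in \widetilde{S}^{\ast}$, and the inequality $k_{\widehat{x}} + l^{\ast} - 1 \leq k_x - 1$ forced by $\gamma_1 = l^{\ast} - 1$ guarantees that the indistinguishable length-$l^{\ast}$ continuations fit within both $\widehat{x}$ and $x$ while $x$ remains in $\overline{\Omega}$ throughout, yielding the same pair in $\Lambda_{l^{\ast}}$ — again a contradiction.

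The main obstacle is the length bookkeeping in the sufficiency direction: the truncation $|\widehat{x}| = k_x + \delta$ makes it delicate to fit $l^{\ast}$-step witnesses for $\Lambda^{\ast}$ inside the available segments, and the calibration $\gamma_1 = l^{\ast} - 1$ (rather than anything larger) is exactly what is needed in case~(b) so that $x$ has not yet crossed $\Omega$ over the required $l^{\ast}$ consecutive steps starting at $k_{\widehat{x}}$.
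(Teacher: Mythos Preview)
Your proof is correct. The necessity argument is essentially the same as the paper's (both construct, from a pair in $\widetilde{S}^{\ast}\cap\Lambda^{\ast}$, an execution crossing $\Omega$ that is indistinguishable, for arbitrarily large $\delta$, from one that never does). For sufficiency, however, the paper takes a one-line shortcut: it observes that $B_{1}(\widetilde{S}^{\ast})=\widetilde{S}^{\ast}$, so condition~(\ref{asymp_oneshot}) is exactly condition~(\ref{general}) of Corollary~\ref{th_general} with $b=1$, which immediately yields $\tau=0$ and hence $\Omega$-diagnosability. Your sufficiency instead unfolds a direct contradiction argument with explicit parameters $\delta=\gamma_{1}=\gamma_{2}=l^{\ast}-1$ and a case split on whether the competing execution $\widehat{x}$ crosses $\Omega$ at all and, if so, where its first crossing lies relative to the window. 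This is more work but is self-contained (it does not rely on Theorem~\ref{general_as} or its corollary), and your case~(b)---where $\widehat{x}$ crosses $\Omega$ strictly before the window, so the roles of $x$ and $\widehat{x}$ swap---makes explicit a subtlety that the paper's reduction hides inside the proof of Theorem~\ref{general_as}. Either approach is fine; yours has the advantage of being readable independently of Section~\ref{sub1}, at the cost of repeating some of that section's reasoning.
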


\begin{proof}
\textbf{Sufficiency:} Setting $b=1$ in condition (\ref{general}),
$B_{1}\left(  \widetilde{S}^{\ast}\right)  =\widetilde{S}^{\ast}$ and we
obtain condition (\ref{asymp_oneshot}). Hence $\tau=0$ and $M$ is
$\Omega-diag$. \ \textbf{Necessity}: if $\widetilde{S}^{\ast}\cap\Lambda
^{\ast}\neq\emptyset$ then for any $f,l\in\mathbb{Z}$, and since
$\Lambda^{\ast}\subset F^{\ast}$ there exists $\left(  i,j\right)
\in\widetilde{S}^{\ast}\cap F^{\ast}$, such that $\left(  i,j\right)
\in\Lambda_{l}$. Then there exists $x\in\mathcal{X}$ such that $x(k)=i$ (or
$j$), and the pair $\left(  i,j\right)  $ cannot be distinguished at step
$k+f$ from $\mathbf{y}\left(  \left.  x\right\vert _{\left[  1,k+f\right]
}\right)  $, $\forall f\in\mathbb{Z}$. Since $\left(  i,j\right)  \in
\Lambda^{\ast}$ there exists a pair $\left(  x_{1},x_{2}\right)  $ of infinite
indistinguishable evolutions starting from $\left(  i,j\right)  $, with the
property that only one of them crosses the set $\Omega$. Therefore there does
not exist $\delta$ such that at step $k+\delta$ it is possible to decide if a
crossing event occurred in the interval $\left[  1,k+\delta\right]  $. Hence
the given condition is necessary.
\end{proof}

Condition $\left(  \ref{asymp_oneshot}\right)  $ implies diagnosability as
defined in \cite{Sampath95}. The proof of the necessity in Theorem
\ref{Th_LaF_exist} above shows that condition $\left(  \ref{asymp_oneshot}
\right)  $ is necessary also for the property of \cite{Sampath95} to hold.
Hence, the diagnosability property of Definition \ref{def_oneshot} and the one
defined in \cite{Sampath95} are equivalent.

As in the previous subsection, we obtain the following equivalent condition
expressed in terms of the parameters for which $\Omega-$diagnosability holds.

\begin{corollary}
\label{Th_LaF_finite}The FSM $M$ is $\Omega-diag$ with delay $\delta$ and
uncertainty radius $\gamma$ if
\begin{equation}
\left(  \widetilde{S}^{\ast}\cap F_{f}\right)  \subset\overline{\Lambda_{l}}
\label{finite_oneshot}%
\end{equation}
where $f\leq\delta+1$, $l\leq\delta+1$ and $l\leq\gamma+1$. Conversely, if $M$
is $\Omega-diag$, then there exist $f\in\left[  1,f^{\ast}-1\right]  $ and
$l\in\left[  1,l^{\ast}-1\right]  $ such that inclusion (\ref{finite_oneshot}) holds.
\end{corollary}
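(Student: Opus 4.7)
The plan is to mirror the derivation of Corollary \ref{th_general} from Theorem \ref{general_as}, but now applied to Theorem \ref{Th_LaF_exist}, using the finite approximations $F_f$ and $\Lambda_l$ instead of their fixed‐point limits $F^{\ast}$ and $\Lambda^{\ast}$. The case $b=1$ is the key simplification: since $B_1(\widetilde{S}^{\ast})=\widetilde{S}^{\ast}$, the role of the backward‐indistinguishability set collapses and the condition reduces from $(B_b(\widetilde{S}^{\ast})\cap F_f)\subset\overline{\Lambda_l}$ to $(\widetilde{S}^{\ast}\cap F_f)\subset\overline{\Lambda_l}$.

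First I would rewrite the asymptotic characterization from Theorem \ref{Th_LaF_exist}. Using the inclusion $\Lambda^{\ast}\subset F^{\ast}$, the condition $\widetilde{S}^{\ast}\cap\Lambda^{\ast}=\emptyset$ is equivalent to $(\widetilde{S}^{\ast}\cap F^{\ast})\subset\overline{\Lambda^{\ast}}$. This is the form amenable to approximation by $F_f$ and $\Lambda_l$.

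For sufficiency, I would invoke the monotonicity properties of the recursions in Lemmas \ref{LemmaF} and \ref{L_Lambda}: $F^{\ast}\subset F_f$ for every $f\in[1,f^{\ast}]$, and $\Lambda^{\ast}\subset\Lambda_l$ for every $l\in[1,l^{\ast}]$ (so $\overline{\Lambda_l}\subset\overline{\Lambda^{\ast}}$). Hence if (\ref{finite_oneshot}) holds for some such $f,l$, then
\[
\widetilde{S}^{\ast}\cap F^{\ast}\subset\widetilde{S}^{\ast}\cap F_{f}\subset\overline{\Lambda_{l}}\subset\overline{\Lambda^{\ast}},
\]
so Theorem \ref{Th_LaF_exist} yields $\Omega-diag$. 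To extract the delay and uncertainty bounds I would replay, with $b=1$, the online‐diagnoser discussion that follows Corollary \ref{th_general}: with $b=1$ one immediately gets $\tau=b-1=0$, the detection delay is governed by $\max\{f,l\}-1$, and the uncertainty radius by $l-1$. The hypotheses $f\leq\delta+1$, $l\leq\delta+1$ and $l\leq\gamma+1$ thus translate directly into detection delay at most $\delta$ and uncertainty radius at most $\gamma$.

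For the converse, if $M$ is $\Omega-diag$ then Theorem \ref{Th_LaF_exist} gives $\widetilde{S}^{\ast}\cap\Lambda^{\ast}=\emptyset$, equivalently $(\widetilde{S}^{\ast}\cap F^{\ast})\subset\overline{\Lambda^{\ast}}$. Taking $f=f^{\ast}$ and $l=l^{\ast}$, so that $F_f=F^{\ast}$ and $\Lambda_l=\Lambda^{\ast}$ by definition of $f^{\ast}$ and $l^{\ast}$, produces witnesses satisfying (\ref{finite_oneshot}). The only genuinely delicate point, and the one I expect to require the most care to write out cleanly, is the parameter bookkeeping linking $(f,l)$ to $(\delta,\gamma)$ in the sufficiency direction; everything else is a direct specialization of arguments already given for Corollary \ref{th_general}.
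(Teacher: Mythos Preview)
Your proposal is correct and matches the paper's approach exactly; the paper's own proof is simply ``Straightforward consequence of Theorem \ref{Th_LaF_exist},'' and your argument spells out precisely that specialization of Corollary \ref{th_general} with $b=1$. One small point worth flagging: in the converse you take $f=f^{\ast}$ and $l=l^{\ast}$ as witnesses, whereas the corollary as stated asks for $f\in[1,f^{\ast}-1]$ and $l\in[1,l^{\ast}-1]$; this appears to be a typo in the statement (compare the bounds $[1,f^{\ast}]$, $[1,l^{\ast}]$ in Corollaries \ref{th_general} and \ref{Th_main_finite}), and your choice is the natural and correct one.
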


\begin{proof}
Straightforward consequence of Theorem \ref{Th_LaF_exist}.
\end{proof}

Condition $\left(  \ref{finite_oneshot}\right)  $ gives the tools for the
computation of the delay between the occurrence of the critical event and its
detection. More precisely, let us explain how the condition $\left(
\ref{finite_oneshot}\right)  $ allows the determination of the delay of the
diagnosis of the crossing event and the uncertainty about the time at which
the event occurred, and how the online detection can be done.

Suppose $\left(  \ref{finite_oneshot}\right)  $ holds for some $f$ and $l$.
Let $k^{\prime}$ be the first $k\geq\max\left\{  f,l\right\}  $ such that
$\widehat{x}\left(  k\right)  \cap\Omega\neq\emptyset$. If $\widehat{x}\left(
k^{\prime}\right)  \subset\Omega$, then we can deduce that the set $\Omega$
was crossed for the first time at step $k^{\prime}-(\max\left\{  f,l\right\}
-1)$. Otherwise suppose that $\widehat{x}\left(  k^{\prime}\right)  =\left\{
i,j,h\right\}  $, with only the state $i$ belonging to $\Omega$. Then each
pair $\left(  i,j\right)  $, $\left(  i,h\right)  $ and $\left(  j,h\right)  $
belongs to $\widetilde{S}^{\ast}\cap F_{f}$. Since the inclusion $\left(
\ref{finite_oneshot}\right)  $ holds, then any pair of indistinguishable state
evolution of $M$ starting from $\widetilde{S}^{\ast}\cap F_{f}$ is such that
both evolutions in the pair cross the set $\Omega$ within at most $l$ steps.
Therefore at step $k^{\prime}$ we are sure that the actual evolution of $M$ is
such that $x(h)\in\Omega$, for some $h\in\left[  k^{\prime}-(\max\left\{
f,l\right\}  -1),k^{\prime}-\max\left\{  f,l\right\}  )+l\right]  $. Hence
detection occurs with a maximum delay $\delta=\max\left\{  f,l\right\}  -1$,
and with uncertainty $\gamma=l-1$.

The next result characterizes $\Omega$-initial state observability (see
Definition \ref{def_oneshot}), a special case of $\Omega$-diagnosability.

\begin{corollary}
$M$ is $\Omega-$ initial state observable if and only if
\begin{equation}
\left(  X_{0}\cap F^{\ast}\right)  \subset\left(  \Omega\times\Omega\right)
\cup\left(  \overline{\Omega}\times\overline{\Omega}\right)
\label{initial_state}%
\end{equation}

\end{corollary}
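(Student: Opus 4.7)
The plan is to prove both directions by exploiting the fact that the set $F^{\ast}\cap(X_{0}\times X_{0})$ captures exactly those pairs of initial states that are indistinguishable for every finite prefix and therefore cannot be discriminated from any finite output observation.

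For necessity, I would argue by contraposition. Assume $(X_{0}\cap F^{\ast})\not\subset (\Omega\times\Omega)\cup(\overline{\Omega}\times\overline{\Omega})$. Then there exists $(i,j)\in F^{\ast}$ with $i,j\in X_{0}$ and, without loss of generality, $i\in\Omega$ and $j\in\overline{\Omega}$. By Lemma \ref{LemmaF} and the fixed-point characterization of $F^{\ast}$, for every $k$ there are executions $x_{k}\in\mathcal{X}_{\{i\}}$ and $\hat{x}_{k}\in\mathcal{X}_{\{j\}}$ of length $k$ with $\mathbf{y}(x_{k})=\mathbf{y}(\hat{x}_{k})$. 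Using liveness (Assumption 1) together with a standard compactness / König-type extraction on the finite state set, these extend to infinite executions $x\in\mathcal{X}_{\{i\},\infty}$ and $\hat{x}\in\mathcal{X}_{\{j\},\infty}$ with matching output on every prefix. Since $i\in\Omega\subset X_{0}$, we have $k_{x}=1$, whereas $\hat{x}(1)=j\notin\Omega$. For any proposed $\delta$, the prefix $\hat{x}|_{[1,1+\delta]}$ lies in $\mathbf{y}^{-1}(\mathbf{y}(x|_{[1,1+\delta]}))$ and violates the requirement $\hat{x}(k_{x})\in\Omega$ forced by $\gamma_{1}=\gamma_{2}=0$. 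Hence $M$ is not $\Omega$-initial state observable.

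For sufficiency, I would set $\delta=f^{\ast}-1$ and verify that this $\delta$ witnesses the property. Pick any $x\in\mathcal{X}$ with $k_{x}$ finite and any $\hat{x}\in\mathbf{y}^{-1}(\mathbf{y}(x|_{[1,k_{x}+\delta]}))$. Both strings have the same length $k_{x}+\delta\geq 1+\delta=f^{\ast}$ and produce the same output, so the pair $(x(1),\hat{x}(1))\in X_{0}\times X_{0}$ is $(k_{x}+\delta)$-forward indistinguishable. By Lemma \ref{LemmaF}(i) and the stabilization $F_{f^{\ast}}=F^{\ast}$, this pair belongs to $F^{\ast}$, hence to $(X_{0}\cap F^{\ast})$. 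The inclusion then forces $x(1)$ and $\hat{x}(1)$ to lie on the same side of $\Omega$. Combined with $\Omega\subset X_{0}$ and the minimality in the definition of $k_{x}$ (so that $x(1)\in\Omega$ iff $k_{x}=1$), this pins down $\hat{x}(k_{x})\in\Omega$ in both the case $k_{x}=1$ (directly from $\hat{x}(1)\in\Omega$) and the case $k_{x}>1$ (where $x(1)\notin\Omega$ forces $\hat{x}(1)\notin\Omega$ so that the earliest possible crossing of $\hat{x}$ is read off unambiguously from the matching output prefix).

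The main obstacle is the sufficiency step: one must argue that the uniform bound $\delta=f^{\ast}-1$ actually works simultaneously for every trajectory, which requires showing that any $(k_{x}+\delta)$-forward indistinguishable pair of initial states is already in the stabilized set $F^{\ast}$, and then using $\Omega\subset X_{0}$ to propagate distinguishability of initial states to exact identification of the crossing step $k_{x}$. The necessity direction is essentially immediate once the extraction of a pair of infinite indistinguishable executions from $(i,j)\in F^{\ast}$ is justified via Lemma \ref{LemmaF} and liveness.
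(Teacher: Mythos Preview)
Your necessity argument is fine and matches the paper, which simply calls that direction ``obvious.'' The sufficiency argument, however, has a genuine gap in the case $k_{x}>1$. From the hypothesis $(X_{0}\cap F^{\ast})\subset(\Omega\times\Omega)\cup(\overline{\Omega}\times\overline{\Omega})$ you correctly deduce that $x(1)$ and $\widehat{x}(1)$ lie on the same side of $\Omega$, but the sentence ``the earliest possible crossing of $\widehat{x}$ is read off unambiguously from the matching output prefix'' does no work: the inclusion you are assuming constrains only pairs in $X_{0}\times X_{0}$, and for $k>1$ the pair $(x(k),\widehat{x}(k))$ need not lie in $X_{0}\times X_{0}$ even though it lies in $F^{\ast}$. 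Concretely, take $X=\{1,2,3,4\}$, $X_{0}=\{1,2\}$, $\Omega=\{2\}$, $H(1)=H(4)=a$, $H(2)=H(3)=b$, and $\Delta=\{(1,2),(1,3),(2,4),(3,4),(4,4)\}$. Then $(X_{0}\times X_{0})\cap F^{\ast}=\{(1,1),(2,2)\}$, so the inclusion holds trivially; yet the executions $x=1\,2\,4\,4\cdots$ and $\widehat{x}=1\,3\,4\,4\cdots$ have identical outputs, $k_{x}=2$, and $\widehat{x}(2)=3\notin\Omega$, so $\Omega$-initial state observability fails for every $\delta$.

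For comparison, the paper does not argue directly as you do; it specializes Corollary~\ref{Th_LaF_finite} with $l=1$ and then asserts $\widetilde{S}^{\ast}=X_{0}$ to reduce $(\widetilde{S}^{\ast}\cap F_{f})\subset\overline{\Lambda_{1}}$ to the displayed inclusion. That identification $\widetilde{S}^{\ast}=(X_{0}\times X_{0})\cap\Pi$ is exactly what breaks in the example above (there $(2,3)\in\widetilde{S}^{\ast}$), so the paper is implicitly using an extra hypothesis---essentially that states of $X_{0}$ have no predecessors, which forces $k_{x}\in\{1,\infty\}$ whenever $\Omega\subset X_{0}$. Under that convention your argument for $k_{x}=1$ already suffices and the problematic case $k_{x}>1$ never arises; without it, neither your proof nor the paper's goes through, and the counterexample shows the statement is false as written.
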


\begin{proof}
\textbf{Sufficiency:} \ If condition $\left(  \ref{finite_oneshot}\right)  $
holds with $l=1$, then $\gamma_{1}=\gamma_{2}=0$ and $M$ is $\Omega-$ initial
state observable. Since $\overline{\Lambda_{1}}=\left(  \Omega\times
\Omega\right)  \cup\left(  \overline{\Omega}\times\overline{\Omega}\right)  $
and $\widetilde{S}^{\ast}=X_{0}$, condition $\left(  \ref{finite_oneshot}%
\right)  $ boils down to inclusion \ref{initial_state} and the sufficiency
follows. \textbf{Necessity:} obvious.
\end{proof}

\subsection{Eventual and critical $\Omega-$diagnosability}

Consider the eventual $\Omega-$diagnosability\ property as defined in
Definition \ref{Def_eventual}.

For simplicity, in this sub-section, the sets $B^{\ast}\left(  S^{\ast
}\right)  $ and $B_{k}\left(  S^{\ast}\right)  $ will be denoted by $B^{\ast}$
and $B_{k}$.

A pair $\left(  i,j\right)  $ in the set $\Gamma^{\ast}\cap\Lambda^{\ast}$ is
such that only one state of the pair belongs to $\Omega$. The states \ $i$ and
$j$ are the ending states of a pair of arbitrarily long indistinguishable
state executions of the system $M$, with initial state in $X_{0}$, such that
one of these executions never crosses the set $\Omega$, and are the initial
states of a pair of arbitrarily long indistinguishable state executions of the
same system $M$, such that one of these executions never crosses the set
$\Omega$. Therefore we can prove the following:

\begin{theorem}
\label{Th_main_existence}The FSM $M$ is eventually $\Omega-diag$ if and only
if
\begin{equation}
\Gamma^{\ast}\cap\Lambda^{\ast}=\emptyset\label{diag_ex}%
\end{equation}

\end{theorem}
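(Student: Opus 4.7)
The approach is to follow the two-direction pattern used for Theorems~\ref{general_as} and~\ref{Th_LaF_exist}: identify $\Gamma^{\ast}\cap\Lambda^{\ast}$ as the precise obstruction to eventual $\Omega$-diagnosability, with $\Gamma^{\ast}$ encoding the ``long backward history with the $\overline{\Omega}$-side in $\overline{\Omega}$'' and $\Lambda^{\ast}$ encoding the ``infinite forward continuation with the $\overline{\Omega}$-side in $\overline{\Omega}$''.

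For the necessity direction, I plan to fix any pair $(i,j)\in\Gamma^{\ast}\cap\Lambda^{\ast}$ (say with $i\in\Omega$, $j\in\overline{\Omega}$) and any candidate parameters $\tau,\delta,\gamma_{1},\gamma_{2}$, and construct a counterexample execution. Setting $k^{\ast}:=\max\{\tau+1,\gamma_{1}+1\}$, I use $(i,j)\in\Gamma_{k^{\ast}}$ to obtain indistinguishable length-$k^{\ast}$ trajectories $x_{1},x_{2}$ ending at $(i,j)$ with $x_{2}(h)\in\overline{\Omega}$ for every $h\in[1,k^{\ast}]$ and every intermediate pair in $S^{\ast}$. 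Since $(x_{1}(1),x_{2}(1))\in S^{\ast}$, I prepend indistinguishable trajectories from $X_{0}$; then, using $(i,j)\in\Lambda^{\ast}$, I append indistinguishable infinite forward trajectories with the $j$-side staying in $\overline{\Omega}$. This yields $x,\widehat{x}\in\mathcal{X}$ agreeing in output, with $x(n)=i\in\Omega$ at the step $n$ at which the concatenation reaches $(i,j)$, and with $\widehat{x}(h)\in\overline{\Omega}$ for every $h\in[n-\gamma_{1},n+\gamma_{2}]$. Choosing $k=n$ in Definition~\ref{Def_eventual} then contradicts eventual $\Omega$-diagnosability for the fixed $\tau,\delta,\gamma_{1},\gamma_{2}$, and by arbitrariness the property fails entirely.

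For sufficiency, I plan to assume $\Gamma^{\ast}\cap\Lambda^{\ast}=\emptyset$ and take the explicit parameters $\tau=\gamma_{1}=g^{\ast}-1$ and $\delta=\gamma_{2}=l^{\ast}-1$. Given $x\in\mathcal{X}$ with finite $k_{x}$, any $k\geq\max\{k_{x},\tau+1\}$ with $x(k)\in\Omega$, and any $\widehat{x}\in\mathbf{y}^{-1}(\mathbf{y}(x|_{[1,k+\delta]}))$, I argue by contradiction: suppose $\widehat{x}(h)\in\overline{\Omega}$ for all $h\in[k-\gamma_{1},k+\gamma_{2}]$. Because $x,\widehat{x}$ both start in $X_{0}$ and have matching outputs through step $k+\delta$, every pair $(x(h),\widehat{x}(h))$ with $h\leq k+\delta$ lies in $S^{\ast}$. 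The restrictions of $(x,\widehat{x})$ to $[k-\gamma_{1},k]$ and to $[k,k+\gamma_{2}]$ then witness $(x(k),\widehat{x}(k))\in\Gamma_{\gamma_{1}+1}$ and $(x(k),\widehat{x}(k))\in\Lambda_{\gamma_{2}+1}$ respectively. Since $\gamma_{1}+1\geq g^{\ast}$ and $\gamma_{2}+1\geq l^{\ast}$, Lemmas~\ref{L_Gamma} and~\ref{L_Lambda} give $\Gamma_{\gamma_{1}+1}=\Gamma^{\ast}$ and $\Lambda_{\gamma_{2}+1}=\Lambda^{\ast}$, and the resulting $(x(k),\widehat{x}(k))\in\Gamma^{\ast}\cap\Lambda^{\ast}=\emptyset$ closes the argument.

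The main obstacle I expect is the double concatenation in necessity: one must simultaneously prepend an $X_{0}$-reaching indistinguishable pair (guaranteed only by $S^{\ast}$-membership, with no $\overline{\Omega}$-containment on that initial segment) and append infinite forward extensions (from $\Lambda^{\ast}$), while ensuring that the critical window $[n-\gamma_{1},n+\gamma_{2}]$ on the $\widehat{x}$-side remains entirely inside $\overline{\Omega}$; this forces the choice $k^{\ast}\geq\gamma_{1}+1$. The secondary subtlety is the parameter coordination in sufficiency, where $\tau\geq\gamma_{1}$ is needed so that $k-\gamma_{1}\geq 1$, and the choices $\gamma_{i}=i^{\ast}-1$ must simultaneously clear the fixed-point thresholds of Lemmas~\ref{L_Gamma} and~\ref{L_Lambda} and respect $\gamma_{2}\leq\delta$ demanded by Definition~\ref{Def_eventual}.
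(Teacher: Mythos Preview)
Your proposal is correct and follows essentially the same approach as the paper: necessity by concatenating the backward $\Gamma^{\ast}$-witness (extended to $X_{0}$ via $S^{\ast}$) with the forward $\Lambda^{\ast}$-witness to build a counterexample pair of executions, and sufficiency by choosing parameters tied to the fixed-point indices $g^{\ast}$ and $l^{\ast}$. Your sufficiency is organized as a single clean contradiction (showing $(x(k),\widehat{x}(k))\in\Gamma_{g^{\ast}}\cap\Lambda_{l^{\ast}}=\Gamma^{\ast}\cap\Lambda^{\ast}$) rather than the paper's preliminary case-split on $\Gamma^{\ast}=\emptyset$ or $\Lambda^{\ast}=\emptyset$, but the underlying mechanism and parameter choices are the same.
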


\begin{proof}
\textbf{Sufficiency:} Let $\tau=g^{\ast}$. By definition of $\Gamma^{\ast}$,
if $\Gamma^{\ast}=\emptyset$ then if for some $k\geq\tau+1$ the execution
$x\in\mathcal{X}_{X_{0}}$ is such that $x(k)\in\Omega$, then any $x^{\prime
}\in\mathbf{y}^{-1}\left(  \mathbf{y}\left(  \left.  x\right\vert _{\left[
1,k\right]  }\right)  \right)  $ is such that $x^{\prime}(h)\in\Omega$, for
some $h\in\left[  k-(g^{\ast}-1),k\right]  $. Therefore condition in
Definition \ref{Def_eventual} is satisfied with parameters $\tau=g^{\ast}$,
$\gamma=g^{\ast}$ and $\delta=g^{\ast}$. By definition of $\Lambda^{\ast}$, if
$\Lambda^{\ast}=\emptyset$ then given a pair $\left(  i,j\right)  \in
\Omega\times\overline{\Omega}$, any pair of indistinguishable state executions
$x^{\prime}$ and $x"$ starting from $\left(  i,j\right)  $, respectively, are
such that $x"(h)\in\Omega$, for some $h\in\left[  1,l^{\ast}\right]  $.
Therefore, $\Lambda^{\ast}=\emptyset$ implies that, if for some $k\geq\tau+1$
the execution $x\in\mathcal{X}_{X_{0}}$ is such that $x(k)\in\Omega$, then any
$x^{\prime}\in\mathbf{y}^{-1}\left(  \mathbf{y}\left(  \left.  x\right\vert
_{\left[  1,k+\max\left\{  f^{\ast},l^{\ast}\right\}  -1\right]  }\right)
\right)  $ is such that $x^{\prime}(h)\in\Omega$, for some $h\in\left[
k,k+(l^{\ast}-1)\right]  $. Therefore condition in Definition
\ref{Def_eventual} is satisfied with parameters $\tau=g^{\ast}$,
$\gamma=l^{\ast}$ and $\delta=l^{\ast}-1$. Finally, it is straightforward to
check that if $\Gamma^{\ast}\cap\Lambda^{\ast}=\emptyset$ then if for some
$k\geq\tau+1$ $x\in\mathcal{X}_{X_{0}}$ is such that $x(k)\in\Omega$, then any
$x^{\prime}\in\mathbf{y}^{-1}\left(  \mathbf{y}\left(  \left.  x\right\vert
_{\left[  1,k+\max\left\{  f^{\ast},l^{\ast}\right\}  -1\right]  }\right)
\right)  $ is such that $x^{\prime}(h)\in\Omega$, for some $h\in\left[
k-(g^{\ast}-1),k+(l^{\ast}-1)\right]  $. Therefore condition in Definition
\ref{Def_eventual} is satisfied with parameters $\tau=g^{\ast}$, $\gamma
=\max\left\{  g^{\ast},l^{\ast}\right\}  -1$ and $\delta=l^{\ast}-1$.

\textbf{Necessity}: Suppose that $\Gamma^{\ast}\cap\Lambda^{\ast}\neq
\emptyset$. Then there exists $\left(  i,j\right)  \in B^{\ast}\cap F^{\ast}$
such that $\left(  i,j\right)  \in\Gamma^{\ast}\cap\Lambda^{\ast}$. Therefore
$\left(  i,j\right)  \in\left(  \left(  \Omega\times\overline{\Omega}\right)
\cup\left(  \Omega\times\overline{\Omega}\right)  \right)  $. Therefore
$\left(  i,j\right)  $ cannot be in general distinguished and there exists two
indistinguishable infinite and left unbounded trajectories crossing $i$ and
$j$, but only one of them crosses the set $\Omega$. Therefore $M$ is not
eventually $\Omega-diag$ and the condition $\left(  \ref{finite_diag}\right)
$ is necessary.
\end{proof}

The following equivalent characterization of eventual $\Omega-$diagnosability
is obtained:

\begin{corollary}
\label{Th_main_finite}If there exist $b\in\left[  1,b^{\ast}\right]  $,
$f\in\left[  1,f^{\ast}\right]  $, $g\in\left[  1,g^{\ast}\right]  $ and
$l\in\left[  1,l^{\ast}\right]  $ such that
\begin{equation}
\left(  B_{b}\cap F_{f}\right)  \subset\overline{\left(  \Gamma_{g}\cap
\Lambda_{l}\right)  } \label{finite_diag}%
\end{equation}
then $M$ is eventually $\Omega-diag$ with $\tau=\max\left\{  b,g\right\}  -1$,
$\delta=\max\left\{  f,l\right\}  -1$, $\gamma_{1}=g-1$, $\gamma_{2}=l-1$.
Conversely, if $M$ is eventually $\Omega-diag$, then there exist $b\in\left[
1,b^{\ast}\right]  $, $f\in\left[  1,f^{\ast}\right]  $, $g\in\left[
1,g^{\ast}\right]  $ and $l\in\left[  1,l^{\ast}\right]  $ such that inclusion
(\ref{finite_diag}) holds.
\end{corollary}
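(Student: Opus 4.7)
The plan is to reduce both directions to Theorem~\ref{Th_main_existence} by exploiting the monotonicity of the recursions (\ref{recF}), (\ref{recB}), (\ref{rec_Lambda}), (\ref{rec_Gamma}), exactly as in Corollaries~\ref{th_general} and~\ref{Th_LaF_finite}. The core observation is that the condition $\Gamma^{\ast}\cap\Lambda^{\ast}=\emptyset$ of Theorem~\ref{Th_main_existence} is equivalent to the set inclusion $(B^{\ast}\cap F^{\ast})\subset\overline{(\Gamma^{\ast}\cap\Lambda^{\ast})}$, because by construction $\Gamma^{\ast}\subset B^{\ast}(S^{\ast})$ and $\Lambda^{\ast}\subset F^{\ast}$ (see (\ref{g}) and (\ref{f})), so $\Gamma^{\ast}\cap\Lambda^{\ast}\subset B^{\ast}\cap F^{\ast}$; hence $\Gamma^{\ast}\cap\Lambda^{\ast}$ is empty if and only if it is disjoint from the superset $B^{\ast}\cap F^{\ast}$.

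For sufficiency I would assume (\ref{finite_diag}) holds for some admissible $b,f,g,l$ and chain the inclusions
\[
B^{\ast}\cap F^{\ast}\;\subset\;B_{b}\cap F_{f}\;\subset\;\overline{\Gamma_{g}\cap\Lambda_{l}}\;\subset\;\overline{\Gamma^{\ast}\cap\Lambda^{\ast}},
\]
where the first inclusion uses $B_{k+1}\subset B_{k}$ and $F_{k+1}\subset F_{k}$ (Lemmas~\ref{LemmaF}, \ref{LemmaB}), and the third uses $\Gamma^{\ast}\subset\Gamma_{g}$ and $\Lambda^{\ast}\subset\Lambda_{l}$ (Lemmas~\ref{L_Lambda}, \ref{L_Gamma}). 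Combined with the equivalence just noted, this gives $\Gamma^{\ast}\cap\Lambda^{\ast}=\emptyset$, and Theorem~\ref{Th_main_existence} yields eventual $\Omega$-diagnosability.

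The step I expect to be the main obstacle is justifying the specific parameter values $\tau=\max\{b,g\}-1$, $\delta=\max\{f,l\}-1$, $\gamma_{1}=g-1$, $\gamma_{2}=l-1$, since the bare existence argument above only recovers eventual diagnosability with the (possibly larger) parameters of Theorem~\ref{Th_main_existence}. To sharpen the bounds I would replay the sufficiency proof of Theorem~\ref{Th_main_existence} with $B_b,F_f,\Gamma_g,\Lambda_l$ in place of $B^{\ast},F^{\ast},\Gamma^{\ast},\Lambda^{\ast}$, using the interpretive statements already established: a pair in $B_{b}$ represents two indistinguishable executions from $X_{0}$ of length at least $b$ ending at the pair, and a pair in $\Gamma_{g}$ represents two such executions, one entirely in $\overline{\Omega}$, of length $g$; symmetrically for $F_{f}$ and $\Lambda_{l}$ in the forward direction. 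Given $x\in\mathcal{X}$ with $x(k)\in\Omega$ and $k\geq\max\{b,g\}$, any $\widehat{x}\in\mathbf{y}^{-1}(\mathbf{y}(x|_{[1,k+\max\{f,l\}-1]}))$ can differ from $x$ at step $k$ only through a pair $(x(k),\widehat{x}(k))\in B_{b}\cap F_{f}$; the hypothesis (\ref{finite_diag}) then forbids this pair from lying in $\Gamma_{g}\cap\Lambda_{l}$, forcing $\widehat{x}$ to cross $\Omega$ within the window $[\max\{1,k-(g-1)\},k+(l-1)]$. The thresholds on $k$ and on the observation horizon are precisely what produce the stated values of $\tau$, $\delta$, $\gamma_{1}$, $\gamma_{2}$.

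For the converse, assume $M$ is eventually $\Omega$-diag. By Theorem~\ref{Th_main_existence}, $\Gamma^{\ast}\cap\Lambda^{\ast}=\emptyset$, and choosing $b=b^{\ast}$, $f=f^{\ast}$, $g=g^{\ast}$, $l=l^{\ast}$ makes $\Gamma_{g}\cap\Lambda_{l}=\Gamma^{\ast}\cap\Lambda^{\ast}=\emptyset$, so (\ref{finite_diag}) holds trivially. This direction is straightforward once Theorem~\ref{Th_main_existence} and the characterizations of the fixed points from Lemmas~\ref{LemmaF}--\ref{L_Gamma} are in hand.
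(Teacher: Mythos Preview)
Your proposal is correct and follows essentially the same route as the paper: both arguments rewrite $\Gamma^{\ast}\cap\Lambda^{\ast}=\emptyset$ as $(B_{b^{\ast}}\cap F_{f^{\ast}})\subset\overline{(\Gamma_{g^{\ast}}\cap\Lambda_{l^{\ast}})}$, sandwich this via the monotone chain $(B^{\ast}\cap F^{\ast})\subset(B_b\cap F_f)\subset\overline{(\Gamma_g\cap\Lambda_l)}\subset\overline{(\Gamma^{\ast}\cap\Lambda^{\ast})}$, and handle the converse by taking $b=b^{\ast}$, $f=f^{\ast}$, $g=g^{\ast}$, $l=l^{\ast}$. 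Your explicit sketch of the parameter bounds by replaying the sufficiency proof of Theorem~\ref{Th_main_existence} with $B_b,F_f,\Gamma_g,\Lambda_l$ is exactly what the paper means when it declares the estimation ``obvious''.
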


\begin{proof}
Since $\Gamma_{g^{\ast}}=\Gamma^{\ast}\subset B^{\ast}=B_{b^{\ast}}$,
$\Lambda_{l^{\ast}}=\Lambda^{\ast}\subset F^{\ast}=F_{f^{\ast}}$, then
$\Gamma^{\ast}\cap\Lambda^{\ast}=\emptyset$ can be rewritten as $B_{b^{\ast}%
}\cap F_{f^{\ast}}\cap\Gamma_{g^{\ast}}\cap\Lambda_{l^{\ast}}=\emptyset$. This
last condition is equivalent to $\left(  B_{b^{\ast}}\cap F_{f^{\ast}}\right)
\subset\overline{\left(  \Gamma_{g^{\ast}}\cap\Lambda_{l^{\ast}}\right)  }$.
For $b\in\left[  1,b^{\ast}\right]  $, $f\in\left[  1,f^{\ast}\right]  $,
$g\in\left[  1,g^{\ast}\right]  $ and $l\in\left[  1,l^{\ast}\right]  $, if
$\left(  B_{b}\cap F_{f}\right)  \subset\overline{\left(  \Gamma_{g}%
\cap\Lambda_{l}\right)  }$ we can write%
\[
\left(  B_{b^{\ast}}\cap F_{f^{\ast}}\right)  \subset\left(  B_{b}\cap
F_{f}\right)  \subset\overline{\left(  \Gamma_{g}\cap\Lambda_{l}\right)
}\subset\overline{\left(  \Gamma_{g^{\ast}}\cap\Lambda_{l^{\ast}}\right)  }%
\]
Therefore the proof of the sufficiency follows from the proof of Theorem
\ref{Th_main_existence}. The estimation of the parameters $\tau$, $\delta$,
$\gamma_{1}$ and $\gamma_{2}$, given the parameters $b$, $f$, $g$ and $l$ is
obvious. Let us consider the last statement, and suppose that it is false,
i.e. $M$ is eventually $\Omega-diag$ but for any choice of the parameters in
the given interval the inclusion (\ref{finite_diag}) is not satisfied. By
setting $b=b^{\ast}$, $f=f^{\ast}$, $g=g^{\ast}$ and $l=l^{\ast}$ we obtain
that $\Gamma^{\ast}\cap\Lambda^{\ast}\neq\emptyset$ and hence by Theorem
\ref{Th_main_existence} it is not possible that $M$ is eventually
$\Omega-diag$.
\end{proof}

We now show how the condition $\left(  \ref{finite_diag}\right)  $ allows the
determination of the delay of the diagnosis of the crossing event, of the
uncertainty about the time at which the event occurred and of the duration of
the transient where the diagnosis is not possible or not required.

Suppose $\left(  \ref{finite_diag}\right)  $ holds for some $b$, $f$, $g$ and
$l$. Let $k^{\prime}$ be any $k\geq\max\left\{  b,g\right\}  +\max\left\{
f,l\right\}  -1$ such that $\widehat{x}\left(  k\right)  \cap\Omega
\neq\emptyset$. If $\widehat{x}\left(  k^{\prime}\right)  \subset\Omega$, then
we can deduce that the set $\Omega$ was crossed at step $k"=k^{\prime}%
-(\max\left\{  f,l\right\}  -1)$. Otherwise suppose that $\widehat{x}\left(
k^{\prime}\right)  =\left\{  i,j,h\right\}  $, with only the state $i$
belonging to $\Omega$. Then each pair $\left(  i,j\right)  $, $\left(
i,h\right)  $ and $\left(  j,h\right)  $ belongs to $B_{b}\cap F_{f}$. Since
the inclusion $\left(  \ref{finite_diag}\right)  $ holds, then each pair of
state evolutions $x_{1}$ and $x_{2}$, compatible with the observations up to
step $k$, and such that $x_{1}\left(  k"\right)  =i$ and $x_{2}\left(
k")\right)  =j$ has the property that both evolutions crossed the set $\Omega$
in the interval $\left[  k"-g+1,k"\right]  $, if $\left(  i,j\right)
\in\overline{\Gamma_{g}}$, or in the interval $\left[  k",k"+l-1\right]  $, if
$\left(  i,j\right)  \in\overline{\Lambda_{l}}$. Therefore at step $k^{\prime
}$ the actual evolution of $M$ is such that $x(h)\in\Omega$, for some
$h\in\left[  k"-(g-1),k^{\prime}+l-1\right]  $. Hence detection occurs with a
maximum delay $\delta=\max\left\{  f,l\right\}  -1$, with uncertainty
$\gamma=\max\left\{  g,l\right\}  -1$ and with $\tau=\max\left\{  b,g\right\}
-1$. Since $B_{1}=S^{\ast}$, $F_{1}=\Pi$, $\Gamma_{1}=\Lambda_{1}=\left(
\left(  \Omega\times\overline{\Omega}\right)  \cup\left(  \overline{\Omega
}\times\Omega\right)  \right)  $, then if $\left(  \ref{finite_diag}\right)  $
holds in the very special case of $b=g=f=l=1$, then $S^{\ast}\subset\left(
\left(  \Omega\times\Omega\right)  \cup\left(  \overline{\Omega}%
\times\overline{\Omega}\right)  \right)  $, and detection occurs with a
maximum delay $\delta=0$, with uncertainty $\gamma=0$ and with $\tau=0$.

As a consequence of Theorem \ref{Th_main_existence}, we also obtain the
following characterizations of diagnosability in two interesting special
cases. The first one requires no delay in the detection (Case $\delta=0$).

\begin{corollary}
\label{c2}(Case $\delta=0$) $M$ is eventually $\Omega-diag$ with $\delta=0$ if
and only if
\begin{equation}
B^{\ast}\subset\left(  \left(  \Omega\times\Omega\right)  \cup\left(
\overline{\Omega}\times\overline{\Omega}\right)  \right)
\end{equation}

\end{corollary}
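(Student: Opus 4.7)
The plan is to obtain Corollary \ref{c2} as a specialization of Corollary \ref{Th_main_finite} to the regime $\delta=\gamma_1=\gamma_2=0$. Since $\delta=\max\{f,l\}-1$ and $\gamma_1=g-1$ in Corollary \ref{Th_main_finite}, these requirements force $f=l=g=1$ in the inclusion (\ref{finite_diag}). Under that choice, $F_1=\Pi$ gives $B_b\cap F_1=B_b$, while $\Lambda_1=\Gamma_1=((\Omega\times\overline{\Omega})\cup(\overline{\Omega}\times\Omega))\cap S^{\ast}$ yields $\Gamma_1\cap\Lambda_1=\Gamma_1$. The inclusion (\ref{finite_diag}) then collapses to $B_b\subset\overline{\Gamma_1}$, which, because $B_b\subset S^{\ast}$, is equivalent to $B_b\subset(\Omega\times\Omega)\cup(\overline{\Omega}\times\overline{\Omega})$; taking $b=b^{\ast}$ recovers exactly the hypothesis of Corollary \ref{c2}.

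For sufficiency I would simply invoke Corollary \ref{Th_main_finite} at $b=b^{\ast}$ and $f=g=l=1$, which delivers eventual $\Omega-diag$ with $\tau=b^{\ast}-1$ and $\delta=\gamma_1=\gamma_2=0$, in particular $\delta=0$. For necessity I would argue by contraposition: suppose $(i,j)\in B^{\ast}$ with $i\in\Omega$ and $j\in\overline{\Omega}$, and build a witness that violates eventual $\Omega-diag$ with $\delta=0$. By the defining property of $B^{\ast}(S^{\ast})$, for every $k$ there exist indistinguishable length-$k$ executions $x_1^k,x_2^k$ with all pairs in $S^{\ast}$ and ending at $(i,j)$. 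Since $(x_1^k(1),x_2^k(1))\in S^{\ast}$, I can prepend $X_0$-rooted indistinguishable prefixes to obtain executions $X_1,X_2$ from $X_0$ of arbitrarily large length $N$ with the same output, ending at $(i,j)$; extending $X_1$ to an infinite execution in $\mathcal{X}$ by liveness and taking $N$ larger than the hypothetical $\tau$, the eventual $\Omega-diag$ condition at step $N$ (with $\gamma_1=\gamma_2=0$ forced by $\delta=0$) demands $X_2(N)\in\Omega$, contradicting $X_2(N)=j\in\overline{\Omega}$.

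The main obstacle is the role of the uncertainty radius in the necessity step. Tracking the parameters of Corollary \ref{Th_main_finite}, had we only required $\delta=0$ with $\gamma_1$ left free, letting $g$ vary up to $g^{\ast}$ would instead yield the a priori weaker characterization $\Gamma^{\ast}=\emptyset$, and then $B^{\ast}\subset(\Omega\times\Omega)\cup(\overline{\Omega}\times\overline{\Omega})$ would be merely sufficient. The equivalence in Corollary \ref{c2} therefore hinges on the convention stated earlier in Section \ref{sec1} that $\delta=0$ implies $\gamma=0$; once that convention is invoked, forcing $g=l=1$ in addition to $f=1$, both directions match cleanly as outlined above, and the estimate $\tau=b^{\ast}-1$ on the transient is read off directly from Corollary \ref{Th_main_finite}.
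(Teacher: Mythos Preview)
Your proposal is correct and follows essentially the same approach as the paper: sufficiency by specializing Corollary~\ref{Th_main_finite} at $b=b^{\ast}$, $f=g=l=1$, and necessity by contraposition from a pair $(i,j)\in B^{\ast}\cap(\Omega\times\overline{\Omega})$. Your necessity argument is in fact more explicit than the paper's (which simply asserts that such a pair makes detection at step $k$ impossible), and your closing paragraph flagging the role of the convention ``$\delta=0$ implies $\gamma=0$'' from Section~\ref{sec1} is a worthwhile observation that the paper's proof leaves implicit.
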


\begin{proof}
\textbf{Sufficiency}: if we set $f=1$ and $l=1$, then $\delta=0$. Since
$F_{1}=\Pi$, $B^{\ast}\subset S^{\ast}\subset\Pi$ and $\Gamma_{1}=\Lambda
_{1}=\left(  \left(  \Omega\times\overline{\Omega}\right)  \cup\left(
\overline{\Omega}\times\Omega\right)  \right)  $, then condition
\ref{finite_diag} with $b=b^{\ast}$ and $g=1$ becomes
\[
B^{\ast}\subset\left(  \left(  \Omega\times\Omega\right)  \cup\left(
\overline{\Omega}\times\overline{\Omega}\right)  \right)
\]
and hence $M$ is eventually $\Omega-diag$ with $\delta=0$. \textbf{Necessity}:
suppose that there exists $\left(  i,j\right)  \in B^{\ast}$ such that
$\left(  i,j\right)  \in\overline{\left(  \left(  \Omega\times\Omega\right)
\cup\left(  \overline{\Omega}\times\overline{\Omega}\right)  \right)
}=\left(  \left(  \Omega\times\overline{\Omega}\right)  \cup\left(
\overline{\Omega}\times\Omega\right)  \right)  $. Hence for any $k$ such that
$x(k)=i\in\Omega$, it is not possible to decide at step $k$ if $x(k)\in\Omega$
or not. Hence $M$ is not eventually $\Omega-diag$ with $\delta=0$.
\end{proof}

The second special case requires the exact detection of the step at which the
crossing event occurred (Case $\gamma=0$).

\begin{corollary}
(Case $\gamma=0$) $M$ is eventually $\Omega-diag$ with $\gamma=0$ if and only
if condition $\ref{finite_diag}$ holds with $g=1$ and $l=1$, i.e. there exist
$b$ and $f$ such that%
\begin{equation}
\left(  B_{b}\cap F_{f}\right)  \subset\left(  \Omega\times\Omega\right)
\cup\left(  \overline{\Omega}\times\overline{\Omega}\right)  \label{xx}%
\end{equation}

\end{corollary}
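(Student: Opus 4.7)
The claim is precisely the specialization of Corollary \ref{Th_main_finite} to $g = l = 1$, with the subtlety that the converse direction of that corollary fixes no particular values of $g$ and $l$. For the easy half, observe that $\Gamma_1 = \Lambda_1 = ((\Omega \times \overline{\Omega}) \cup (\overline{\Omega} \times \Omega)) \cap S^{\ast}$ and $B_b \cap F_f \subset S^{\ast}$, so the inclusion $(B_b \cap F_f) \subset \overline{\Gamma_1 \cap \Lambda_1}$ reduces exactly to (\ref{xx}). Applying Corollary \ref{Th_main_finite} with $g = l = 1$ then yields that $M$ is eventually $\Omega$-diag with $\gamma_1 = g-1 = 0$ and $\gamma_2 = l-1 = 0$, hence $\gamma = \max\{\gamma_1, \gamma_2\} = 0$, proving sufficiency.

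For necessity, assume $M$ is eventually $\Omega$-diag with $\gamma = 0$ and witnessing parameters $\tau, \delta$, and suppose toward a contradiction that no $b, f$ make (\ref{xx}) hold. Instantiating $b = b^{\ast}$ and $f = f^{\ast}$ produces a pair $(i,j) \in B^{\ast}(S^{\ast}) \cap F^{\ast}$ with $i \in \Omega$ and $j \in \overline{\Omega}$ (up to symmetry). Because $B^{\ast}(S^{\ast})$ is a fixed point of recursion (\ref{recB}), iterating the predecessor step produces a backward chain of any desired length in $B^{\ast}(S^{\ast}) \subset S^{\ast}$ terminating at $(i,j)$; its earliest pair lies in $S^{\ast}$, so by Definition \ref{def_Sstar} an indistinguishable pair of prefixes from $X_0 \times X_0$ can be prepended. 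The resulting pair $x_1, x_2 \in \mathcal{X}_{X_0}$ is indistinguishable, satisfies $x_1(n) = i$ and $x_2(n) = j$ for an $n$ that I may choose to meet $n \geq \tau + 1$, and extends to infinite indistinguishable executions by $(i,j) \in F^{\ast}$. Since $x_1(n) = i \in \Omega$ and $k_{x_1} \leq n$, the $\gamma = 0$ clause of Definition \ref{Def_eventual} applied along $x_1$ forces $x_2(n) \in \Omega$, contradicting $j \in \overline{\Omega}$.

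The main obstacle is the gluing step that promotes a set-theoretic witness $(i,j) \in B^{\ast}(S^{\ast}) \cap F^{\ast}$ into a genuine pair of indistinguishable state executions of $M$ rooted in $X_0$, of controllable length, that reach $(i,j)$ synchronously at step $n$. The fixed-point property of (\ref{recB}) delivers arbitrary-length backward chains within $S^{\ast}$, the defining property of $S^{\ast}$ (Definition \ref{def_Sstar}) supplies the prefix to $X_0$, and indistinguishability is preserved along the concatenation since every segment already carries matching output words; once this construction is in place, the contradiction with the $\gamma = 0$ clause of Definition \ref{Def_eventual} is immediate.
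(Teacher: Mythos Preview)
Your proof is correct and follows the same route as the paper's. Sufficiency is identical (invoke Corollary \ref{Th_main_finite} with $g=l=1$); for necessity, both arguments extract a pair $(i,j)\in B^{\ast}(S^{\ast})\cap F^{\ast}$ with $i\in\Omega$, $j\in\overline{\Omega}$ and use it to exhibit indistinguishable executions contradicting the $\gamma=0$ clause, with your write-up making the gluing of the backward chain, the $X_0$-prefix from $S^{\ast}$, and the forward extension via $F^{\ast}$ more explicit than the paper does.
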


\begin{proof}
Sufficiency is straightforward from Corollary \ref{Th_main_finite}.
\textbf{Necessity}: If $B^{\ast}=\emptyset$ then $M$ is eventually
$\Omega-diag$ with $\gamma=0$ and condition $\left(  \ref{xx}\right)  $ holds
with $b=b^{\ast}$. Otherwise, suppose that for any $b$ and $f$ there exists
$\left(  i,j\right)  \in\left(  B_{b}\cap F_{f}\right)  $, belonging to
$\overline{\left(  \left(  \Omega\times\Omega\right)  \cup\left(
\overline{\Omega}\times\overline{\Omega}\right)  \right)  }=\left(  \left(
\Omega\times\overline{\Omega}\right)  \cup\left(  \overline{\Omega}%
\times\Omega\right)  \right)  $. Therefore there exists $\left(  i,j\right)
\in B^{\ast}\cap F^{\ast}$ such that $\left(  i,j\right)  \in\left(  \left(
\Omega\times\overline{\Omega}\right)  \cup\left(  \overline{\Omega}%
\times\Omega\right)  \right)  $. Hence for any $\tau$ and for any $\delta$
there exists an execution such that whenever $x(k)=i$, $k\geq\tau+1$, it is
not possible to deduce from the output whether $x(k)\in\Omega$. Hence $M$ is
not eventually $\Omega-diag$ with $\gamma=0$.
\end{proof}

Finally, we characterize the property in Definition \ref{Def_critical}. By
Proposition \ref{prop}, a characterization of critical $\Omega-$diagnosability
is obtained as simple consequence of Theorems \ref{Th_LaF_exist} and
\ref{Th_main_existence}:

\begin{corollary}
\label{c1}$M$ is critically $\Omega-diag$ if and only if
\begin{align*}
\widetilde{S}^{\ast}\cap\Lambda^{\ast}  &  =\emptyset\\
&  \text{and}\\
\Gamma^{\ast}\cap\Lambda^{\ast}  &  =\emptyset
\end{align*}

\end{corollary}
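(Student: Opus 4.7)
The statement follows by direct combination of three earlier results, so the proof should be very short. My plan is to invoke Proposition \ref{prop} to reduce critical $\Omega$-diagnosability to the conjunction of $\Omega$-diagnosability and eventual $\Omega$-diagnosability, and then to substitute the characterizations of those two properties given by Theorems \ref{Th_LaF_exist} and \ref{Th_main_existence} respectively.

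More precisely, first I would recall that, by Proposition \ref{prop}, $M$ is critically $\Omega$-diag if and only if $M$ is simultaneously $\Omega$-diag and eventually $\Omega$-diag. Next I would apply Theorem \ref{Th_LaF_exist}, which states that $\Omega$-diagnosability is equivalent to the emptiness of $\widetilde{S}^{\ast}\cap\Lambda^{\ast}$, and Theorem \ref{Th_main_existence}, which states that eventual $\Omega$-diagnosability is equivalent to the emptiness of $\Gamma^{\ast}\cap\Lambda^{\ast}$. The conjunction of these two set-emptiness conditions is exactly the condition in the statement, so both directions of the equivalence follow at once.

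Since the work has already been done in the preceding theorems and proposition, there is no real obstacle here; the only thing to be careful about is to cite the right references and to note that Proposition \ref{prop} provides both directions of the reduction (necessity is immediate since critical $\Omega$-diagnosability is obtained from eventual $\Omega$-diagnosability with $\tau=0$, which in turn implies both $\Omega$-diag with $T=0$ and eventual $\Omega$-diag; sufficiency is the nontrivial direction proved in Proposition \ref{prop}). No additional computations or constructions on the sets $\widetilde{S}^{\ast}$, $\Gamma^{\ast}$, $\Lambda^{\ast}$ are required.
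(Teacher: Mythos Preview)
Your proposal is correct and follows exactly the same approach as the paper, which simply cites Proposition \ref{prop} together with the characterizations in equations (\ref{asymp_oneshot}) and (\ref{diag_ex}) (i.e., Theorems \ref{Th_LaF_exist} and \ref{Th_main_existence}). There is nothing to add.
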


\begin{proof}
Straightforward, from Proposition \ref{prop} and equations
(\ref{asymp_oneshot}) and (\ref{diag_ex}).
\end{proof}

The value $\max\left\{  b,g\right\}  -1$ in the statement of Corollary
\ref{Th_main_finite} is not in general the minimum value of $\tau$ such that
$M$ is eventually $\Omega-diag$ (see the next Example \ref{E1}). Hence
critical $\Omega-diag$ cannot be deduced by setting $b=g=1$ in condition
(\ref{finite_diag}).

The next proposition characterizes critical $\Omega-$observability.

\begin{proposition}
(Case $\delta=0$ and $\tau=0$) $M$ is critically $\Omega-obs$ if and only if%
\begin{equation}
S^{\ast}\subset\left(  \Omega\times\Omega\right)  \cup\left(  \overline
{\Omega}\times\overline{\Omega}\right)
\end{equation}

\end{proposition}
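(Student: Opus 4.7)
The plan is to unpack Definition \ref{Def_critical} in the case $\delta=0$, $\tau=0$ and match it against Definition \ref{def_Sstar} of $S^{\ast}$. Since $\gamma_{2}\le\delta$ forces $\gamma_{2}=0$, and the remark after (\ref{special_cases}) notes that $\delta=0$ also forces $\gamma_{1}=0$, critical $\Omega$-observability reduces to: for every $x\in\mathcal{X}$ and every step $k$ with $x(k)\in\Omega$, every $\hat{x}\in\mathbf{y}^{-1}(\mathbf{y}(x|_{[1,k]}))$ satisfies $\hat{x}(k)\in\Omega$.

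For sufficiency, I assume $S^{\ast}\subset(\Omega\times\Omega)\cup(\overline{\Omega}\times\overline{\Omega})$, fix $x\in\mathcal{X}$, a step $k$ with $x(k)\in\Omega$, and $\hat{x}\in\mathbf{y}^{-1}(\mathbf{y}(x|_{[1,k]}))$. Assumption 2 makes the output string have length $k$, so $|\hat{x}|=k$; thus $x|_{[1,k]}$ and $\hat{x}$ are two indistinguishable members of $\mathcal{X}_{X_{0}}$ of length $k$ ending at $x(k)$ and $\hat{x}(k)$, whence $(x(k),\hat{x}(k))\in S^{\ast}$ by Definition \ref{def_Sstar}. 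The inclusion $S^{\ast}\subset(\Omega\times\Omega)\cup(\overline{\Omega}\times\overline{\Omega})$ then yields $\hat{x}(k)\in\Omega$, confirming critical $\Omega-obs$ with $\tau=\delta=\gamma_{1}=\gamma_{2}=0$.

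For necessity I would argue by contradiction. Suppose some $(i,j)\in S^{\ast}$ satisfies $i\in\Omega$ and $j\in\overline{\Omega}$ (the case $i\in\overline{\Omega}$, $j\in\Omega$ is symmetric). Definition \ref{def_Sstar} supplies indistinguishable $x_{1},x_{2}\in\mathcal{X}_{X_{0}}$ with common length $k$, $x_{1}(k)=i$, $x_{2}(k)=j$; liveness (Assumption 1) extends $x_{1}$ to an infinite $\tilde{x}_{1}\in\mathcal{X}$. Then $\tilde{x}_{1}(k)=i\in\Omega$ makes $k_{\tilde{x}_{1}}\le k$ finite, and $\mathbf{y}(x_{2})=\mathbf{y}(x_{1})=\mathbf{y}(\tilde{x}_{1}|_{[1,k]})$ places $x_{2}\in\mathbf{y}^{-1}(\mathbf{y}(\tilde{x}_{1}|_{[1,k]}))$. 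The reduced condition at step $k$ forces $x_{2}(k)\in\Omega$, contradicting $x_{2}(k)=j\in\overline{\Omega}$.

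The only subtle point is the collapse $\gamma_{1}=0$ flagged in the text after (\ref{special_cases}); taking it for granted makes the proof a direct correspondence between $S^{\ast}$ and same-length indistinguishable endpoints, so I expect no computational or structural difficulty beyond carefully applying Definitions \ref{Def_critical} and \ref{def_Sstar}. Without that collapse one would instead need to supplement necessity with a minimum-length-witness argument in the product FSM on $\Pi$, leveraging the backward analysis underlying $\Gamma^{\ast}$, to rule out every positive $\gamma_{1}$.
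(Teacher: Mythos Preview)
Your argument is correct and is more direct than the paper's. You unfold Definitions \ref{Def_critical} and \ref{def_Sstar} and match them against each other: for sufficiency you observe that any $\hat{x}\in\mathbf{y}^{-1}(\mathbf{y}(x|_{[1,k]}))$ has length $k$ under Assumption~2, so $(x(k),\hat{x}(k))\in S^{\ast}$ and the inclusion forces $\hat{x}(k)\in\Omega$; for necessity you take a bad pair $(i,j)\in S^{\ast}$, produce the witnessing indistinguishable executions, and contradict the $\gamma_{1}=\gamma_{2}=0$ condition. The paper instead routes the proof through the machinery of Section~\ref{sec3}: for sufficiency it invokes Corollary~\ref{c2} (using $B^{\ast}\subset S^{\ast}$) to get $\delta=0$ and appeals to Corollary~\ref{c1} to handle $\tau=0$; for necessity it first reduces to $B^{\ast}\subset(\Omega\times\Omega)\cup(\overline{\Omega}\times\overline{\Omega})$ via Corollary~\ref{c2} and then treats the remaining pairs in $S^{\ast}\setminus B^{\ast}$ separately. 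Your approach is self-contained and avoids these intermediate results; the paper's approach illustrates how the proposition fits into the hierarchy of corollaries but is heavier (and, as written, its sufficiency step contains a garbled inclusion). Both proofs ultimately rely on the same collapse $\delta=0\Rightarrow\gamma=0$ that you correctly flag as the one non-formal step.
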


\begin{proof}
\textbf{Sufficiency}: Since $B^{\ast}\subset S^{\ast}$ then by Corollary
\ref{c2} $M$ is eventually $\Omega-diag$ with $\delta=0$. Since $F_{f}\subset
S^{\ast}$, then by Corollary \ref{c1} $M$ is eventually $\Omega-diag$ with
$\tau=0$. \textbf{Necessity}: By Corollary \ref{c2}, it is necessary that
$B^{\ast}\subset\left(  \left(  \Omega\times\Omega\right)  \cup\left(
\overline{\Omega}\times\overline{\Omega}\right)  \right)  $. Suppose there
exists $\left(  i,j\right)  \in S^{\ast}$, such that $\left(  i,j\right)
\notin B^{\ast}$ and $\left(  i,j\right)  \in\overline{\left(  \left(
\Omega\times\Omega\right)  \cup\left(  \overline{\Omega}\times\overline
{\Omega}\right)  \right)  }=\left(  \left(  \Omega\times\overline{\Omega
}\right)  \cup\left(  \overline{\Omega}\times\Omega\right)  \right)  $. Then
there exists $k$ such that $x(k)=i\in\Omega$, and it is not possible to decide
at step $k$ if $x(k)\in\Omega$ or not. Hence $M$ is not eventually
$\Omega-diag$ with $\tau=0$.
\end{proof}

\section{Examples\label{examples}}

Recall that for a set $Y\subset X$, $Y^{-}$ denotes the symmetric closure of
$Y$.

\begin{example}
\label{E-1}($M$ is not parametrically $\Omega-diag$) Consider the FSM
represented in Figure \ref{fig_nodiag}. Let $X_{0}=\left\{  1\right\}  $ and
$\Omega=\left\{  3\right\}  $. Since $B^{\ast}\left(  \widetilde{S}^{\ast
}\right)  =\left\{  \left(  3,4\right)  \right\}  ^{-}\cup\Theta$ (see Figure
\ref{tilde_nodiag}) and $\Lambda^{\ast}=\left\{  \left(  3,4\right)  \right\}
^{-}$, then $B^{\ast}\left(  \widetilde{S}^{\ast}\right)  \cap\Lambda^{\ast
}=\left\{  \left(  3,4\right)  \right\}  ^{-}$ and hence by Theorem
\ref{general_as} $M$ is not parametrically $\Omega-diag$.
\end{example}

\begin{example}
\label{E0}($M$ is eventually $\Omega-diag$ but not $\Omega-diag$) Consider the
FSM defined in Example \ref{ex1} and depicted in Figure \ref{fig4}. Let
$X_{0}=X$.
\begin{align*}
\Pi &  =\left\{  \left(  1,3\right)  ,\left(  1,5\right)  ,\left(  3,5\right)
,\left(  2,4\right)  \right\}  ^{-}\cup\Theta\\
S^{\ast}  &  =\Pi\\
B^{\ast}  &  =\left\{  \left(  1,3\right)  \right\}  ^{-}\cup\Theta\text{,
}b^{\ast}=2\\
F^{\ast}  &  =\left\{  \left(  3,5\right)  \right\}  ^{-}\cup\Theta\text{,
}f^{\ast}=2\\
\Gamma^{\ast}  &  =\left\{  \left(  1,3\right)  \right\}  ^{-}\text{, }%
g^{\ast}=2\\
\Lambda^{\ast}  &  =\left\{  \left(  3,5\right)  \right\}  ^{-}\text{,
}l^{\ast}=2
\end{align*}
Since $\Gamma^{\ast}\cap\Lambda^{\ast}=\emptyset$, $M$ is eventually
$\Omega-diag$ with $\delta=1$, $\tau=1$, $\gamma_{1}=\gamma_{2}=1$. Moreover,
$\widetilde{S}^{\ast}=\Pi$, $\widetilde{S}^{\ast}\cap\Lambda^{\ast}%
\neq\emptyset$, and hence $M$ is not $\Omega-diag$. Corollary
\ref{Th_main_finite} allows a better estimation of the parameters. In fact
$B^{\ast}\cap F^{\ast}=\Theta$ which is not a subset of $\Gamma_{1}\cap
\Lambda_{1}$. Therefore $M$ is eventually $\Omega-diag$ with $\delta=1$,
$\tau=1$, $\gamma_{1}=\gamma_{2}=0$.
\end{example}

\begin{example}
\label{E1}($M$ is eventually $\Omega-diag$ or critically $\Omega-diag$,
depending on $X_{0}$) Consider the FSM defined in Example \ref{ex2} and
represented in Figure \ref{fig5}. If $X_{0}=X$ then
\begin{align*}
\Pi &  =\left\{  \left(  1,2\right)  ,\left(  1,3\right)  ,\left(  1,4\right)
,\left(  1,5\right)  ,\left(  2,3\right)  ,\left(  2,4\right)  ,\left(
2,5\right)  ,\left(  3,4\right)  ,\left(  3,5\right)  ,\left(  4,5\right)
,\left(  6,7\right)  \right\}  ^{-}\cup\Theta\\
S^{\ast}  &  =\Pi\\
B^{\ast}  &  =F^{\ast}=\left\{  \left(  2,4\right)  ,\left(  3,5\right)
\right\}  ^{-}\cup\Theta\\
\Gamma^{\ast}  &  =\left\{  \left(  2,4\right)  \right\}  ^{-}\\
\Lambda^{\ast}  &  =\left\{  \left(  3,5\right)  \right\}  ^{-}%
\end{align*}
Therefore $\left(  \Gamma^{\ast}\cap\Lambda^{\ast}\right)  =\emptyset$ and by
Theorem \ref{Th_main_existence} $M$ is eventually $\Omega-diag$. Moreover
$f^{\ast}=b^{\ast}=3$ and $g^{\ast}=l^{\ast}=2$. Hence $M$ is eventually
$\Omega-diag$ with $\delta=\max\left\{  f^{\ast},l^{\ast}\right\}  -1=2$,
$\tau=\max\left\{  b^{\ast},g^{\ast}\right\}  -1=2$, $\gamma_{1}=g^{\ast}%
-1=1$, $\gamma_{2}=l^{\ast}-1=1$.

Since $\widetilde{S}^{\ast}=S^{\ast}=\Pi$ and $\widetilde{S}^{\ast}\cap
F^{\ast}$ is not a subset of $\overline{\Lambda^{\ast}}$, then $M$ is not
$\Omega-diag$, and hence it is not critically $\Omega-diag$.

Suppose $X_{0}=\left\{  1\right\}  $. Then
\begin{align*}
S^{\ast}  &  =\left\{  \left(  2,4\right)  ,\left(  3,5\right)  \right\}
^{-}\cup\Theta\\
B_{1}  &  =B^{\ast}=S^{\ast}\\
F_{1}  &  =F^{\ast}=S^{\ast}\\
\Gamma_{1}  &  =\Lambda_{1}=\left\{  \left(  1,3\right)  ,\left(  1,4\right)
,\left(  2,3\right)  ,\left(  2,4\right)  ,\left(  3,5\right)  ,\left(
4,5\right)  \right\}  ^{-}\\
\Gamma^{\ast}  &  =\left\{  \left(  2,4\right)  \right\}  ^{-}\\
\Lambda^{\ast}  &  =\left\{  \left(  3,5\right)  \right\}  ^{-}\\
\widetilde{S}^{\ast}  &  =\left\{  \left(  2,4\right)  \right\}
\end{align*}
Since $B_{1}\cap F^{\ast}=\left\{  \left(  2,4\right)  ,\left(  3,5\right)
\right\}  $ and $\Gamma_{1}\cap\Lambda^{\ast}=\left\{  \left(  3,5\right)
\right\}  $ then $\left(  B_{1}\cap F^{\ast}\right)  $ is not a subset of
$\overline{\Gamma_{1}\cap\Lambda^{\ast}}$. Therefore there not exist values of
$l$ and $f$ such that the condition
\[
\left(  B_{b}\cap F_{f}\right)  \subset\overline{\Gamma_{g}\cap\Lambda_{l}}%
\]
in Corollary \ref{Th_main_finite} is satisfied for $b=1$ and $g=1$. Hence
$\min\left(  \max\left\{  b,g\right\}  -1\right)  >0$. Nevertheless, $M$ is
critically $\Omega-diag$. In fact $\left(  \Gamma^{\ast}\cap\Lambda^{\ast
}\right)  =\emptyset$ and hence by Theorem \ref{Th_main_existence} $M$ is
eventually $\Omega-diag$. Moreover, $\widetilde{S}^{\ast}\cap\Lambda^{\ast
}=\emptyset$ and hence by Theorem \ref{Th_LaF_exist} $M$ is $\Omega-diag$, and
therefore by Proposition \ref{prop} it is critically $\Omega-diag$, and
therefore it is eventually $\Omega-diag$ with $\tau=0$.
\end{example}

%


\begin{figure}[ptb]
\begin{center}
\includegraphics[scale=0.5]{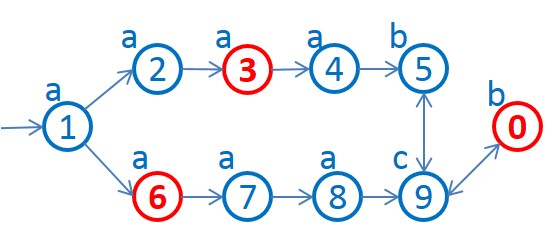}
\end{center}
\caption{FSM $M$ (Example \ref{es_oneshot}).}%
\label{fig_oneshot}%
\end{figure}

\begin{example}
\label{es_oneshot}($M$ is $\Omega-diag$ but not eventually $\Omega-diag$)
Consider the FSM $M$ depicted in Figure \ref{finite_oneshot}. Let
$X_{0}=\left\{  1\right\}  $ and $\Omega=\left\{  3,6,0\right\}  $%
\begin{align*}
S^{\ast}  &  =\left\{  \left(  2,6\right)  ,\left(  3,7\right)  ,\left(
4,8\right)  ,\left(  0,5\right)  \right\}  ^{-}\cup\Theta\\
\widetilde{S}^{\ast}  &  =\left\{  \left(  2,6\right)  \right\}  ^{-}%
\cup\left\{  \left(  1,1\right)  ,\left(  2,2\right)  ,\left(  3,3\right)
,\left(  6,6\right)  \right\} \\
B^{\ast}  &  =\left\{  \left(  0,5\right)  \right\}  ^{-}\\
F_{1}  &  =\Pi\\
F_{2}  &  =\left\{  \left(  2,6\right)  ,\left(  2,3\right)  ,\left(
6,7\right)  ,\left(  0,5\right)  \right\}  ^{-}\cup\Theta\\
F_{3}  &  =\left\{  \left(  2,6\right)  ,\left(  0,5\right)  \right\}  ^{-}\\
F^{\ast}  &  =F_{4}=\left\{  \left(  0,5\right)  \right\}  ^{-}\cup
\Theta\text{, }f^{\ast}=4\\
\Lambda_{1}  &  =\left\{  \left(  2,6\right)  ,\left(  3,7\right)  ,\left(
0,5\right)  \right\}  ^{-}\\
\Lambda^{\ast}  &  =\Lambda_{2}=\left\{  \left(  0,5\right)  \right\}  ^{-}\\
\Gamma^{\ast}  &  =\left\{  \left(  0,5\right)  \right\}  ^{-}%
\end{align*}
This FSM is not eventually $\Omega-diag$, since $\Gamma^{\ast}\cap
\Lambda^{\ast}\neq\emptyset$. It is $\Omega-diag$. In fact by Theorem
\ref{Th_LaF_exist} $\left(  \widetilde{S}^{\ast}\cap F^{\ast}\right)
=\left\{  \left(  1,1\right)  ,\left(  2,2\right)  ,\left(  3,3\right)
,\left(  6,6\right)  \right\}  \subset\overline{\Lambda^{\ast}}$ and hence $M$
is $\Omega-diag$ with $\delta=2$ and $\gamma_{1}=\gamma_{2}=2$. Since
$\widetilde{S}^{\ast}\cap F_{2}\subset\overline{\Lambda_{2}}$, then by
Corollary \ref{Th_LaF_finite}, we can refine our estimations. In fact $M$ is
$\Omega-diag$ with $\delta=1$ and $\gamma_{1}=\gamma_{2}=1$. Moreover $\left(
\widetilde{S}^{\ast}\cap F^{\ast}\right)  \subset\overline{\Lambda_{1}}$, and
hence $M$ is $\Omega-diag$ with $\delta=3$ and $\gamma_{1}=\gamma_{2}=0$.
\end{example}

\section{Appendix: Case $\epsilon\in Y$}

Given the FSM $M$, we propose an algorithm for deriving an FSM $\widehat{M}$
having no silent state such that the parametric diagnosability property can be
checked on either FSMs equivalently. The algorithm in \cite{DeDi:FTSC} can be
retrieved from the one we are going to describe by setting $\Omega=\emptyset$.

Given $M=\left(  X,X_{0},Y,H,\Delta\right)  $, a state $q$ is called silent if
$H\left(  q\right)  =\epsilon$. Let $X_{\epsilon}$ be the set of silent
states. Suppose that any cycle has at least a state $q$ with $H(q)\neq
\epsilon$. Suppose moreover that there is no silent state in $X_{0}$ and that
a state belongs to $X_{0}$ if and only if it has no predecessors. We will say
that a silent state $q\in X$ is reached from $w\in X$ with a silent execution
if there exists a state execution $x\in\mathcal{X}^{\ast}$ such that $x(1)=w$,
$x\left(  \left\vert x\right\vert \right)  =q$ and $\mathbf{y}(x)=H(w)$. The
symbol $\Omega$ denotes the critical set.

The FSM $\widehat{M}=\left(  \widehat{X},\widehat{X}_{0},Y,\widehat{H}%
,\widehat{\Delta}\right)  $ is constructed as described in the following
algorithm (high level description), where, when a state is removed from the
state space, the transitions to and from that state are also be removed,
although this is not explicitly said for the sake of simplicity.

\begin{alg}
\label{alg_nosilent}
\hspace*{\fill} \\

\begin{description}
\item[STEP 0] Split any $q\in X_{\epsilon}$ with silent and nonsilent
successors into two states, $q^{\prime}$ and $q"$, one with only silent
successors, and the other one with only nonsilent successors. The set of
predecessors of $q^{\prime}$ and $q"$ is the same as those of $q$. Update $M$
accordingly. If $q\in\Omega$, then the set $\Omega$ is updated by replacing
$q$ with $q^{\prime}$ and $q"$.

\item[INITIALIZE] $\widehat{M}=M$. Let $X_{F}\subset X\backslash X_{\epsilon}$
be the set of non silent states with a silent successor. Let $X_{L}\subset
X_{\epsilon}$ be the set of silent states with no silent successor.

\item[STEP 1] Split any $q\in X_{L}$ into $2\ast\left\vert X_{F}\right\vert $
states: i.e. in $\widehat{X}$ the set $X_{L}$ is substituted by the set
$\left\{  q_{w},q\in X_{L},w\in X_{F}\right\}  \cup\left\{  \left(
q_{w},1\right)  ,q\in X_{L},w\in X_{F}\right\}  $, where $q_{w}$ is a short
notation for the pair $\left(  q,w\right)  $. Therefore
\[
\widehat{X}=\left(  X\backslash X_{L}\right)  \bigcup\left\{  q_{w},q\in
X_{L},w\in X_{F}\right\}  \bigcup\left\{  \left(  q_{w},1\right)  ,q\in
X_{L},w\in X_{F}\right\}
\]
Moreover $\widehat{H}\left(  q_{w}\right)  =\widehat{H}\left(  \left(
q_{w},1\right)  \right)  =H(w)$, $\forall q\in X_{L}$, $\forall w\in X_{F}$.
The symbol "$1$" appearing in $\left(  q_{w},1\right)  $ is just a flag, whose
meaning will be clarified in the description of STEP 3.

\item[STEP 2] FOR $q\in X_{L}$ DO

\qquad\qquad FOR $w\in X_{F}$ DO

\qquad\qquad\qquad IF $q$ is reached from $w$ with a silent execution of
$M$, and none of the states in this execution belongs to the set $\Omega$,
then $q_{w}\in\widehat{X}$. If $w \in {X}_{0}$ then $q_{w}\in\widehat{X}_{0}$.

\qquad\qquad\qquad OTHERWISE remove $q_{w}$ from the state space
$\widehat{X}$.

\qquad\qquad END

\qquad END

\item[STEP 3] FOR $q\in X_{L}$ DO

\qquad\qquad FOR $w\in X_{F}$ DO

\qquad\qquad\qquad IF $q$ is reached from $w$ with a silent execution of
$M$, and some of the states in this execution belong to the set $\Omega$, then
$\left(  q_{w},1\right)  \in\widehat{X}$. If $w \in {X}_{0}$ then $\left(  q_{w},1\right)\in\widehat{X}_{0}$.

\qquad\qquad\qquad OTHERWISE remove $\left(  q_{w},1\right)  $ from the
state space $\widehat{X}$.

\item[STEP 4] FOR $q_{w}\in\widehat{X}$, $\widehat{\Delta}$ is updated in such
a way that
\[
succ_{\widehat{M}}\left(  q_{w}\right)  =succ\left(  q\right)  \cup\left\{
i_{j}\in\widehat{X}:j\in succ\left(  q\right)  \right\}
\]
and
\[
pre_{\widehat{M}}\left(  q_{w}\right)  =\left(  pre(w)\cap\left(  X\backslash
X_{\epsilon}\right)  \right)  \cup\left\{  i_{j}\in\widehat{X}:i\in pre(w)\cap
X_{\epsilon}\right\}
\]
where, to avoid ambiguities, $succ_{\widehat{M}}$ and $pre_{\widehat{M}}$
denote the operators $succ$ and $pre$ computed for the FSM $\widehat{M}$. If
$w\in succ\left(  q\right)  $ then $\left(  q_{w},q_{w}\right)  \in
\widehat{\Delta}$. A similar construction has to be done for $\left(
q_{w},1\right)  \in\widehat{X}$.

\item[STEP 5] Remove all silent states and all sink states (i.e. states with
no successors) from the state space $\widehat{X}$.

\end{description}
\end{alg}

Define the set $\widehat{\Omega}$ as $\left(  \Omega\cup\left\{  \left(
q_{w},1\right)  ,q\in X_{L},w\in X_{F}\right\}  \right)  \cap\widehat{X}$.

Given $q\in X_{L}$ and $w\in X_{F}$, the test in STEP 2 ($q$ is reached from
$w$ with a silent execution, and none of the states in this execution belongs
to the set $\Omega$) can be done with the procedure described in $\left(
\ref{Pro1}\right)  $, where $\lambda$ is the maximal length of a silent string
(recall that there are no silent cycles in $M$)%
\begin{equation}%
\begin{array}
[c]{l}%
C(0)=\left\{  q\right\}  \text{, }k=0\\
\text{WHILE }\left(  k<\lambda\right)  \wedge\left(  w\notin C\left(
k\right)  \right)  \\
\text{DO }k=k+1;C\left(  k\right)  =\bigcup\limits_{z\in\left(  C\left(
k-1\right)  \right)  \cap\left(  X_{\epsilon}\backslash\Omega\right)
}pre\left\{  z\right\}  \\
\text{END}%
\end{array}
\label{Pro1}%
\end{equation}

\begin{proposition}
\label{P1}Given $M$, a state $q\in X_{\epsilon}\backslash\Omega$ is reached
from $w\in X\backslash\left(  X_{\epsilon}\cup\Omega\right)  $ with a silent
execution, and none of the states in this execution belongs to the set
$\Omega$, if and only if the exit condition of the cycle defined in $\left(
\ref{Pro1}\right)  $ is $w\in C\left(  \overline{k}\right)  $, $1\leq
\overline{k}\leq\lambda$.
\end{proposition}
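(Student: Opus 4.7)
The plan is to establish, by induction on $k$, the following characterization of the sets computed by the while--loop: for every $k\geq 0$, $w'\in C(k)$ if and only if there exist states $y_1,\ldots,y_k$ with $(w',y_1)\in\Delta$, $(y_i,y_{i+1})\in\Delta$ for $i=1,\ldots,k-1$, $y_k=q$, and $y_j\in X_\epsilon\setminus\Omega$ for every $j\in[1,k]$. The base case $k=0$ is immediate from $C(0)=\{q\}$ (empty chain, $w'=q$). The induction step mirrors the recursion $C(k+1)=\bigcup_{z\in C(k)\cap (X_\epsilon\setminus\Omega)}pre(z)$: if $w'\in C(k+1)$, pick such a $z$, apply the hypothesis to obtain witnesses $y_1,\ldots,y_k=q$ for $z$, and prepend $z$ (renamed as the new $y_1$) to produce a chain for $w'$; conversely, given a chain $w',y_1,\ldots,y_{k+1}=q$ of length $k+1$, its suffix $y_1,\ldots,y_{k+1}=q$ witnesses $y_1\in C(k)$ by induction, and since $y_1\in X_\epsilon\setminus\Omega$ and $w'\in pre(y_1)$ one concludes $w'\in C(k+1)$. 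Note that $w$ itself is not required to be silent or outside $\Omega$ for membership in $C(k)$; this distinction is what makes the invariant match the proposition's hypotheses $w\in X\setminus(X_\epsilon\cup\Omega)$ and $q\in X_\epsilon\setminus\Omega$.

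With this invariant in hand, the $(\Leftarrow)$ direction is immediate. If the loop exits with $w\in C(\overline{k})$ for some $\overline{k}\in[1,\lambda]$, the invariant supplies states $y_1,\ldots,y_{\overline{k}}=q$ in $X_\epsilon\setminus\Omega$ forming a state execution $x=w\,y_1\cdots y_{\overline{k}}$. Since $y_j\in X_\epsilon$ for $j\geq 1$, one has $\mathbf{y}(x)=P(H(w)\epsilon\cdots\epsilon)=H(w)$, so this is a silent execution reaching $q$ from $w$; combined with the assumption $w\notin\Omega$, no state of $x$ lies in $\Omega$.

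For $(\Rightarrow)$, suppose there is a silent execution $x\in\mathcal{X}^\ast$ with $x(1)=w$, $x(|x|)=q$, $\mathbf{y}(x)=H(w)$, and $x(i)\notin\Omega$ for all $i$. Since $H(w)\neq\epsilon$ (as $w\notin X_\epsilon$) and $\mathbf{y}(x)=H(w)$, every $x(i)$ for $i\geq 2$ must satisfy $H(x(i))=\epsilon$, i.e.\ $x(i)\in X_\epsilon$. Setting $n=|x|-1\geq 1$, the invariant yields $w\in C(n)$. Because $x(2),\ldots,x(n+1)$ is a chain of silent states and $M$ has no silent cycles, its length $n$ is at most the maximal silent--chain length $\lambda$. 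Hence, when the procedure is run, there is some first index $\overline{k}\in[1,n]\subset[1,\lambda]$ at which $w\in C(\overline{k})$, at which point the while--condition fails and the loop exits as described.

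The main (mild) obstacle is to be careful about two bookkeeping points: first, that the invariant correctly treats $w$ asymmetrically from the intermediate states $y_1,\ldots,y_k=q$ (only the latter are required to be silent and outside $\Omega$); second, that the bound $\overline{k}\leq\lambda$ genuinely relies on the no--silent--cycle assumption, which forces the silent tail of any candidate execution to have length at most $\lambda$. Once these two points are stated cleanly, the rest of the argument is a direct unwinding of definitions and the invariant.
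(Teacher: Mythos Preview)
Your proof is correct: the invariant you establish for $C(k)$ is exactly the right characterization, the induction step faithfully mirrors the recursion in procedure~(\ref{Pro1}), and both directions of the equivalence follow cleanly from it together with the no--silent--cycle bound $n\leq\lambda$. The paper itself does not give a proof of this proposition, stating only that it ``is obvious and hence can be omitted''; your argument supplies the omitted details in full and does so along the natural lines, so there is nothing substantive to compare.
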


The proof of the above Proposition is obvious and hence can be omitted.

We now describe how the test in STEP 3 ($q$ is reached from $w$ with a silent
execution, and some of the states in this execution belong to the set $\Omega
$) can be performed. The procedure described in $\left(  \ref{Pro2}\right)  $
is instrumental in computing the set $\left(  \bigcup\limits_{k=1...g}V\left(
k\right)  \right)  $, for a given $g\in\left[  2,\lambda\right]  $, required
in the statement of the next Proposition \ref{P2}%

\begin{equation}%
\begin{array}
[c]{l}%
G(1)=\left\{  q\right\}  \text{, }k=1\\
\text{FOR }k=1..g-1\text{ DO}\\
G\left(  k+1\right)  =\bigcup\limits_{z\in G\left(  k\right)  \cap
X_{\epsilon}}pre\left\{  z\right\}  \text{, }k=k+1\\
\text{END}\\
\text{IF }w\in G(g)\text{ THEN\ }\\
V(1)=G\left(  g\right)  \\
\text{FOR }k=1...g-1\text{ DO}\\
V\left(  k+1\right)  =\left(  \bigcup\limits_{z\in V\left(  k\right)
}succ\left\{  z\right\}  \right)  \cap\left(  G\left(  g+k\right)  \right)
\text{ END}\\
\text{OTHERWISE }\\
\text{FOR }h=1...g\text{ DO }V\left(  k\right)  =\emptyset
\end{array}
\label{Pro2}%
\end{equation}

\begin{proposition}
\label{P2}Given $M$, a state $q\in X_{\epsilon}$ is reached from $w\in
X\backslash X_{\epsilon}$ with a silent execution, and some of the states in
this execution belong to the set $\Omega$ if and only if there exists
$g\in\left[  2,\lambda\right]  :\left(  \bigcup\limits_{k=1...g}V\left(
k\right)  \right)  \bigcap\Omega\neq\emptyset$.
\end{proposition}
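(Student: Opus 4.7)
The plan is to prove the equivalence by first giving precise inductive characterizations of the auxiliary sets $G(k)$ and $V(k)$ produced by procedure $\left(\ref{Pro2}\right)$, and then reading off the stated equivalence.

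First I would establish, by induction on $k\geq 1$, that $z\in G(k)$ if and only if there exists a state execution $z=z_1\to z_2\to\cdots\to z_k=q$ with $z_i\in X_\epsilon$ for every $i\in[2,k]$ (only the first state may fail to be silent). The base $G(1)=\{q\}$ is immediate. For the inductive step, $z\in G(k+1)$ iff $z\in\mathrm{pre}(z')$ for some silent $z'\in G(k)$; prepending $z$ to the witness trajectory for $z'$ yields the required length-$(k+1)$ trajectory, and conversely any such trajectory truncates. Since $M$ has no silent cycles, only $k\in[1,\lambda]$ matters. In particular, $w\in G(g)$ iff there exists a silent execution from $w$ to $q$ with exactly $g$ states, all intermediate states (and $q$) being silent.

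Next I would correct what appears to be a typo in the forward pass: the intended recursion is
\[
V(k+1)=\Bigl(\bigcup_{z\in V(k)}\mathrm{succ}(z)\Bigr)\cap G(g-k),
\]
giving the invariant $V(k)\subset G(g-k+1)$ (so that $V(g)\subset G(1)=\{q\}$); as written, $G(g+k)$ is undefined. Under this corrected recursion I would show by induction on $k\in[1,g]$ that $V(k)$ is exactly the set of states occurring as the $k$-th state of some silent execution of length $g$ from $w$ to $q$. The base case $V(1)=G(g)$ lists all candidate starting states, with $w$ the relevant one. The step holds because $z\in V(k+1)$ iff $z\in\mathrm{succ}(z')$ for some $z'\in V(k)$ (which extends a partial silent path on the left) and $z\in G(g-k)$ (which guarantees completion to $q$ in the remaining $g-k$ silent steps).

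With these two characterizations the equivalence falls out. For necessity, if $q$ is reached from $w$ by a silent execution $w=x_1,x_2,\ldots,x_g=q$ that crosses $\Omega$ at some $x_j$, then $g\in[2,\lambda]$ (since $w\neq q$ and silent cycles are forbidden), so $w\in G(g)$, the procedure enters the \textsf{IF} branch, and $x_j\in V(j)$, yielding $\bigl(\bigcup_{k=1}^{g}V(k)\bigr)\cap\Omega\ni x_j$. For sufficiency, if some $g\in[2,\lambda]$ and $k^*\in[1,g]$ produce $z^*\in V(k^*)\cap\Omega$, the characterization of $V$ supplies a silent execution from $w$ to $q$ of length $g$ passing through $z^*\in\Omega$. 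The main obstacle is identifying the $G(g+k)$/$G(g-k)$ typo and stating the invariant $V(k)\subset G(g-k+1)$ precisely; once that is in place the rest is bookkeeping mirroring the earlier argument for $G$, and the analogue of Proposition~\ref{P1}.
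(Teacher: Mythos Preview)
Your approach is essentially the same as the paper's: the paper's proof simply asserts that, by construction, $V(k)=\{z\in X: x(k)=z\ \text{for some}\ x\in\mathcal{S}\}$ where $\mathcal{S}$ is the set of silent executions of length $g$ from $w$ to $q$, and then says the result follows; you supply the missing inductive characterizations of $G(k)$ and $V(k)$ that justify this assertion. Your identification of the $G(g+k)\to G(g-k)$ typo is correct and useful (the paper's one-line proof glosses over it), though you should also note that for your stated characterization of $V(k)$ to be literally ``executions from $w$'' one really wants $V(1)=\{w\}$ rather than $V(1)=G(g)$, a point the paper's proof likewise elides.
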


\begin{proof}
Given the set $\mathcal{S\subset X}^{\ast}$ of all the finite silent
executions $x$ of $M$, of length $g$, with first state equal to $w$ and last
state equal to $q$, the recursion in equation $\left(  \ref{Pro2}\right)  $
defines the sets $V\left(  k\right)  $, $k=1...g$. By construction, $V\left(
k\right)  =\left\{  q\in X:x(k)=q\wedge x\in\mathcal{S}\right\}  $. Therefore
the result follows.
\end{proof}

On the basis of the above Propositions \ref{P1} and \ref{P2} and of the fact
that after the execution of STEP 0 of Algorithm  \ref{alg_nosilent} the number
of states in $X$ is less than $2N$, where $N$ is the original number of states
of $X$, the following proposition holds:

\begin{proposition}
The complexity of the Algorithm \ref{alg_nosilent} is $O\left(  N^{3}\right)
$.
\end{proposition}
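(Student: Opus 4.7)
The plan is to show that each phase of Algorithm \ref{alg_nosilent} can be carried out within $O(N^3)$ work, where $N=|X|$ in the original FSM, and that the state space of $\widehat{M}$ never grows beyond $O(N^2)$ so that subsequent bookkeeping fits inside this budget.

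First I would bound the sizes of the auxiliary sets. STEP 0 scans the successor list of every state once and splits at most $|X_\epsilon| \le N$ silent states, so after STEP 0 one has $|X| \le 2N$, done in $O(N^2)$ time. Consequently $|X_L|, |X_F| \le 2N$, and the blow-up in STEP 1 produces at most $|X|+4|X_L|\cdot|X_F|=O(N^2)$ candidate states of $\widehat{X}$, each with its output assigned in constant time.

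The heart of the argument is the analysis of STEPS 2 and 3, which perform $O(|X_L|\cdot|X_F|)=O(N^2)$ silent-reachability queries. A naive per-pair call of Procedures (\ref{Pro1}) and (\ref{Pro2}) costs $\Theta(N^2)$ each, giving $\Theta(N^4)$ or worse, so the bound only goes through after amortisation. For STEP 2, I would fix $q \in X_L$ and run a single backward breadth-first traversal in the subgraph induced by silent non-$\Omega$ states; by Proposition \ref{P1} this decides the STEP 2 condition for every $w \in X_F$ simultaneously in $O(|X|+|\Delta|)=O(N^2)$ time, for $O(N^3)$ over all $q \in X_L$. For STEP 3, I would precompute once the silent-reachability matrix $R\in\{0,1\}^{X\times X}$ defined by $R(a,b)=1$ iff $b$ is reached from $a$ by a silent execution; a BFS from each state computes $R$ in $O(N\cdot(N+|\Delta|))=O(N^3)$ time. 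By Proposition \ref{P2}, the STEP 3 test for a pair $(q,w)$ reduces to the existence of $r \in \Omega$ with $R(w,r)=1$ and $R(r,q)=1$; iterating over $r$ costs $O(N)$ per pair, so $O(N^3)$ in total. Existence of such an $r$ already witnesses the required silent execution through $\Omega$, so the quantifier over $g \in [2,\lambda]$ in Proposition \ref{P2} is absorbed.

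Finally, STEP 4 installs the transition relation for the $O(N^2)$ members of $\widehat{X}$: each $q_w$ (and each $(q_w,1)$) copies successor and predecessor information from an original node of in/out-degree at most $N$, which costs $O(N^3)$ in total. STEP 5 (removing silent and sink states) is one linear sweep in $O(N^2)$. Adding the dominant contributions from STEPS 0--4 yields the claimed $O(N^3)$ bound. The main obstacle, as noted, is the amortisation in STEPS 2 and 3: a literal invocation of Procedures (\ref{Pro1})--(\ref{Pro2}) once per $(q,w)$ pair does not give the bound, and one must either batch the reachability queries by source (STEP 2) or reduce them to $O(1)$ lookups in the precomputed matrix $R$ (STEP 3).
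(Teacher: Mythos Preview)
Your analysis is far more detailed than the paper's, whose proof reads in its entirety: ``Straightforward.'' Your treatment of STEPS~0--3 is correct, and the observation that a literal per-pair invocation of Procedures~(\ref{Pro1}) and~(\ref{Pro2}) would overshoot the $O(N^3)$ budget---so that one must batch the backward searches by $q$ (STEP~2) or reduce STEP~3 to lookups in a precomputed silent-reachability matrix---is a genuine point the paper does not spell out. On this part you have done more than the authors.

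There is, however, a gap in your STEP~4 accounting. You assert that each new state $q_w$ ``copies successor and predecessor information from an original node of in/out-degree at most $N$,'' but the definition
\[
succ_{\widehat{M}}(q_w) = succ(q)\cup\{\,i_j\in\widehat{X}: j\in succ(q)\,\}
\]
adds, for every $j\in succ(q)$, \emph{all} states $i_j\in\widehat{X}$, i.e.\ up to $|X_L|$ many. In the worst case (every $q\in X_L$ silently reachable from every $w\in X_F$, and every $q\in X_L$ having every element of $X_F$ as a successor) this yields $\Theta(N^2)$ out-degree at each of the $\Theta(N^2)$ new states, hence $\Theta(N^4)$ edges in $\widehat{\Delta}$. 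Your $O(N^3)$ bound for STEP~4 therefore does not follow from the argument as written; one needs either a sharper combinatorial bound on $|\widehat{\Delta}|$, or to store $\widehat{\Delta}$ implicitly (via the original $succ$/$pre$ together with the $O(N^2)$ membership table for $\widehat{X}$) rather than materialise it. The paper's one-word proof gives no guidance here, so this is a place where you must supply additional argument or qualify the claim. (The same issue makes your ``one linear sweep in $O(N^2)$'' for STEP~5 optimistic if $\widehat{\Delta}$ is explicit.)
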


\begin{proof}
Straightforward.
\end{proof}

We now establish a precise relationship between the FSM $M$ and the FSM
$\widehat{M}$. Recall that $P\left(  \sigma\right)  $ is the projection of the
string $\sigma$, i.e. the string obtained from $\sigma$ by erasing the symbol
$\epsilon$.

By construction,

- for any finite state execution $x$ of $M$ there exists a finite state
execution $\widehat{x}$ of $\widehat{M}$, such that $\widehat{x}=P\left(
x\right)  $

- for any finite state execution $x$ of $M$ such that $x(k)\notin\Omega$,
$\forall k\in\left[  1,\left\vert x\right\vert \right]  $, $\widehat{x}%
(k)\notin\widehat{\Omega}$, $\forall k\in\left[  1,\left\vert \widehat{x}%
\right\vert \right]  $, $\widehat{x}=P\left(  x\right)  $

- for any finite state execution $x$ of $M$ for which there exists $\delta
\in\left[  1,\left\vert x\right\vert \right]  $ such that $x(\left\vert
x\right\vert -\delta)\in\Omega$, $\exists\widehat{\delta}\leq\delta
:\widehat{x}(\left\vert \widehat{x}\right\vert -\widehat{\delta}%
)\in\widehat{\Omega}$, $\widehat{x}=P\left(  x\right)  $

Conversely,

- for any finite state execution $\widehat{x}$ of $\widehat{M}$ there exists a
set of finite state executions $x$ of $M$, such that $P\left(  x\right)
=\widehat{x}$

- for any finite state execution $\widehat{x}$ of $\widehat{M}$ with
$\widehat{x}(k)\notin\widehat{\Omega}$, $\forall k\in\left[  1,\left\vert
\widehat{x}\right\vert \right]  $, $x(k)\notin\Omega$, $\forall k\in\left[
1,\left\vert x\right\vert \right]  $, $\forall x:P\left(  x\right)
=\widehat{x}$

- given any finite state execution $\widehat{x}$ of $\widehat{M}$ for which
there exists $\delta\in\left[  1,\left\vert x\right\vert \right]  $ such that
$\widehat{x}(\left\vert \widehat{x}\right\vert -\delta)\in\widehat{\Omega}$,
for any finite state exection $x$ of $M$, with $P\left(  x\right)
=\widehat{x}$, there exists $\delta_{x}\geq\delta:x(\left\vert x\right\vert
-\delta_{x})\in\Omega$

Finally,

- $\mathbf{y}\left(  \widehat{x}\right)  =\mathbf{y}\left(  x\right)  $,
$\forall\widehat{x}=P\left(  x\right)  $

- $\mathbf{y}\left(  x\right)  =\mathbf{y}\left(  \widehat{x}\right)  $,
$\forall x:P\left(  x\right)  =\widehat{x}$

Therefore we can establish the following result:

\begin{proposition}
\label{Prop_nosilent}The FSMs $M$ and $\widehat{M}$ have the same output
language. If $M$ is parametrically $\Omega-diag$ with parameters $\tau$,
$\delta$, $\gamma$ and $T\in\left\{  0,\infty\right\}  $, then there exist
$\widehat{\tau}\leq\tau$, $\widehat{\delta}\leq\delta$, $\widehat{\gamma}%
\leq\gamma$ such that $\widehat{M}$ is parametrically $\widehat{\Omega}-diag$
with parameters $\widehat{\tau}$, $\widehat{\delta}$, $\widehat{\gamma}$ and
$\widehat{T}=T$. Conversely, if $\widehat{M}$ is parametrically
$\widehat{\Omega}-diag$ with parameters $\widehat{\tau}$, $\widehat{\delta}$,
$\widehat{\gamma}$ and $\widehat{T}\in\left\{  0,\infty\right\}  $, then there
exist $\tau\geq\widehat{\tau}$, $\delta\geq\widehat{\delta}$ and $\gamma
\geq\widehat{\gamma}$ such that $M$ is parametrically $\Omega-diag$ with
parameters $\tau$, $\delta$, $\gamma$ and $T=\widehat{T}$.
\end{proposition}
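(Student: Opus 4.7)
The plan is to push the definition of parametric diagnosability across the explicit correspondence between executions of $M$ and executions of $\widehat{M}$ listed in the bullet points immediately preceding the proposition. Let $P$ denote both the string projection and (by abuse) the induced map $x \mapsto \widehat{x}$ on state executions. The key quantitative feature is that $P$ is length-contracting: a step index $h$ in $\widehat{x}$ corresponds to a step index $\phi_x(h) \geq h$ in $x$, while conversely a non-silent step $k$ of $x$ maps to some strictly smaller (or equal) index in $\widehat{x}$.

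First I would dispatch the output-language identity: the bullets already give both $\mathbf{y}(x) = \mathbf{y}(P(x))$ and, for any lifting $x$ with $P(x) = \widehat{x}$, $\mathbf{y}(\widehat{x}) = \mathbf{y}(x)$, so $M$ and $\widehat{M}$ accept the same set of output strings. Second, for the direct implication, assume $M$ is parametrically $\Omega$-diag with parameters $\tau, \delta, \gamma_1, \gamma_2$ and $T \in \{0,\infty\}$. Given an execution $\widehat{x}$ of $\widehat{M}$ with finite $k_{\widehat{x}}$, I would lift it to an execution $x$ of $M$; by the design of $\widehat{\Omega}$, in particular the marked states $(q_w,1)$ introduced in STEP 3 of Algorithm \ref{alg_nosilent}, the first crossing index $k_x$ of $x$ is dominated by $\phi_x(k_{\widehat{x}})$. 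Applying the diagnosability hypothesis to $x$ and transferring the conclusion back to every $\widehat{x}' \in \mathbf{y}^{-1}(\mathbf{y}(\widehat{x}|_{[1,\widehat{k}+\widehat{\delta}]}))$ (which lifts to some $x'$ in the analogous preimage for $x$) yields, via the contraction of $\phi_{x'}$, step indices in $\widehat{x}'$ that are no larger than the ones predicted in $x'$; this supplies witnesses $\widehat{\tau} \leq \tau$, $\widehat{\delta} \leq \delta$, $\widehat{\gamma} \leq \gamma$. The value $T$ is preserved because $T = 0$ (only the first crossing) and $T = \infty$ (all crossings) are intrinsic properties of the execution that survive any reindexing. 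For the converse, I would run the symmetric argument: take an execution $x$ of $M$ with finite $k_x$, project to $\widehat{x}$ with $k_{\widehat{x}} \leq k_x$, apply parametric $\widehat{\Omega}$-diagnosability of $\widehat{M}$, and lift the conclusion back; since lifting may re-insert silent segments, the step indices in $M$ can only grow relative to those in $\widehat{M}$, giving $\tau \geq \widehat{\tau}$, $\delta \geq \widehat{\delta}$, $\gamma \geq \widehat{\gamma}$ and again $T = \widehat{T}$ for the same reason.

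The main technical obstacle is the disciplined tracking of step indices under $P$ and its multi-valued inverse, together with verifying that the two-part construction of $\widehat{\Omega}$ (the states inherited from $\Omega$ together with the marked states $(q_w,1)$) correctly encodes crossing events that occur inside silent segments of $M$. Concretely one must check that whenever an execution of $M$ realizes a crossing inside a silent segment terminating at $q$ reached from the non-silent state $w$, the corresponding execution of $\widehat{M}$ passes through $(q_w,1) \in \widehat{\Omega}$ and hence registers a crossing, and dually that every crossing of $\widehat{\Omega}$ lifts to a genuine crossing of $\Omega$ in $M$. Once this correspondence is pinned down, the transfer of the parameters $\tau, \delta, \gamma_1, \gamma_2$ is essentially bookkeeping, and the restriction to $T \in \{0,\infty\}$ lets us avoid any finer analysis of how a finite $T$ would need to be rescaled under $P$.
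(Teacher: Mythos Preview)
Your proposal is correct and follows the same approach as the paper: the paper does not provide a separate proof environment for this proposition but treats it as an immediate consequence of the execution-correspondence bullet points listed just before the statement (the ``Therefore we can establish the following result'' is the entire justification). Your write-up is simply a more explicit unpacking of how those correspondences transfer the parameters, so there is nothing substantively different to compare.
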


\begin{example}
Consider the FSM $M$ in Fig. \ref{figura8}, where the state $3$ is critical
(i.e. $\Omega=\left\{  3\right\}  $) and silent (i.e. $X_{\epsilon}=\left\{
3\right\}  $). Let $X_{0}=\left\{  0,4\right\}  $. By direct inspection $M$ is
eventually $\Omega-diag$, with $\tau=2$ and $\delta=1$.%

\begin{figure}[ptb]
\begin{center}
\includegraphics[scale=0.5]{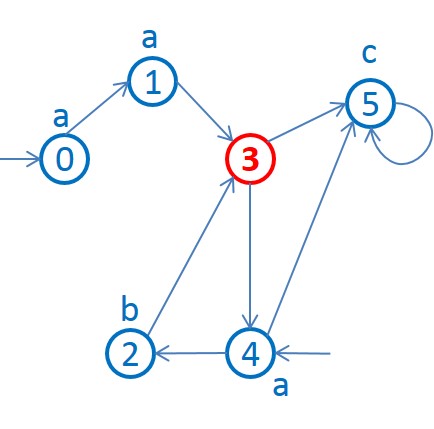}
\end{center}
\caption{FSM $M$.}%
\label{figura8}%
\end{figure}

By applying Algorithm \ref{alg_nosilent}, $X_{F}=\left\{  1,2\right\}  $ and
$X_{L}=\left\{  3\right\}  $. The state $3$ is removed from the state space
and substituted with the states called $3_{1}$, $3_{2}$, $\left(
3_{1},1\right)  $ and $\left(  3_{2},1\right)  $. Since there is no silent
execution starting from $w\in X_{F}$ and reaching the state $3$ without
crossing the set $\Omega$, then only $\left(  3_{1},1\right)  $ and $\left(
3_{2},1\right)  $ have to be considered. In Fig. \ref{figura9} such states are
renamed $6$ and $7$, respectively, for simplicity

\begin{figure}[ptb]
\begin{center}
\includegraphics[scale=0.5]{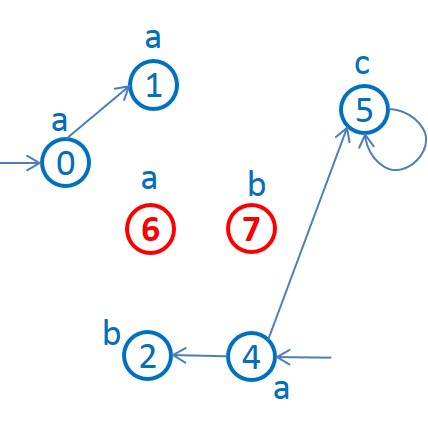}
\end{center}
\caption{FSM after state space redefinition (STEP 1 of Algorithm \ref{alg_nosilent}).}%
\label{figura9}%
\end{figure}

After STEP 4, we obtain the FSM in Fig. \ref{figura10}.%

\begin{figure}[ptb]
\begin{center}
\includegraphics[scale=0.5]{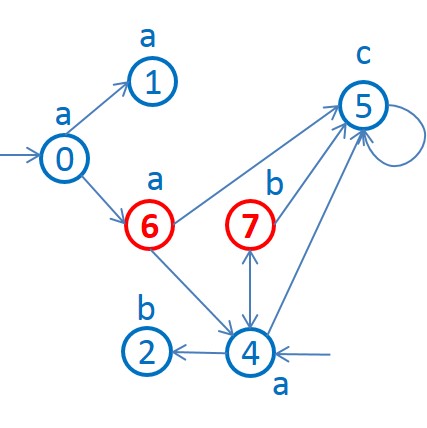}
\end{center}
\caption{FSM after STEP 4 (Algorithm \ref{alg_nosilent}).}%
\label{figura10}%
\end{figure}

Finally, we can remove the sink states $1$ and $2$, and the resulting FSM
$\widehat{M}$ is depicted in Fig. \ref{figura11}, and $\widehat{\Omega
}=\left\{  6,7\right\}  $. Then $\widehat{M}$ is eventually $\widehat{\Omega
}-diag$, with $\widehat{\tau}=1\leq\tau$ and $\widehat{\delta}=1\leq\delta$,
in accord to Proposition \ref{Prop_nosilent}%
\begin{figure}[ptb]
\begin{center}
\includegraphics[scale=0.5]{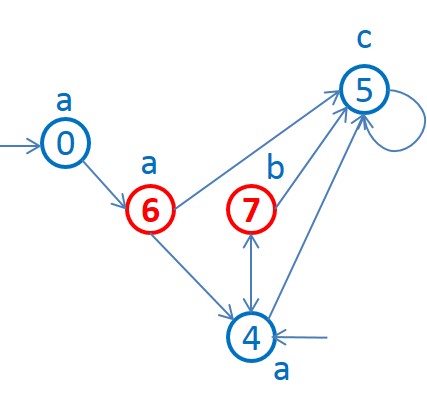}
\end{center}
\caption{FSM $\protect\widehat{M}$.}%
\label{figura11}%
\end{figure}

In the following table we see how state trajectories of $M$ are mapped on
state trajectories of $\widehat{M}$.
\[%
\begin{tabular}
[c]{|l|l|l|}\hline
$x$ & $\widehat{x}$ & $\mathbf{y}\left(  x\right)  =\mathbf{y}\left(
\widehat{x}\right)  $\\\hline
$0135^{\ast}$ & $065^{\ast}$ & $aac^{\ast}$\\\hline
$013\left(  423\right)  ^{\ast}5^{\ast}$ & $06\left(  47\right)  ^{\ast
}5^{\ast}$ & $aa\left(  ab\right)  ^{\ast}c^{\ast}$\\\hline
$01345^{\ast}$ & $0645^{\ast}$ & $ac^{\ast}$\\\hline
$4\left(  234\right)  ^{\ast}5^{\ast}$ & $4\left(  74\right)  ^{\ast}5^{\ast}$
& $a\left(  ab\right)  ^{\ast}c^{\ast}$\\\hline
$4235^{\ast}$ & $475^{\ast}$ & $abc^{\ast}$\\\hline
$45^{\ast}$ & $45^{\ast}$ & $ac^{\ast}$\\\hline
\end{tabular}
\ \
\]

\end{example}

\section{Conclusions}

In this paper, we proposed a general framework for the analysis and
characterization of observability and diagnosability of finite state systems.
Observability and diagnosability were defined with respect to a subset of the
state space, called critical set, i.e. a set of discrete states representing a
set of faults, or more generally any set of interest. Using the proposed
framework, it is possible to check diagnosability of a critical event and at
the same time compute the delay of the diagnosis with respect to the
occurrence of the event, the uncertainty about the time at which that event
occurred, and the duration of a possible initial transient where the diagnosis
is not possible or nor required. Moreover, in the unifying framework we
propose, it was possible to precisely compare some of observabllity and
diagnosability notions existing in the literature and the ones we introduced
in our paper.

For discrete event systems, some effort in the direction of a decentralized
approach to observability and diagnosability has been made e.g. in
\cite{Cieslak:1988} and in \cite{Rudie:1989} where the observability with
respect to a language \cite{Lin:1988} was generalized to the case of
decentralized systems by introducing the notion of coobservability. In
\cite{Wang:2011} and in \cite{Yin:2015} it was proven that coobservability and
codiagnosability can be mapped from one to the other. Extending our approach
to a decentralized framework and comparing our results with the ones above
will be the subject of future investigation.

\bibliographystyle{plain}
\bibliography{biblioElena}

\begin{thebibliography}{10}

\bibitem{baab:2005}
M.~{Babaali} and G.J. {Pappas}.
\newblock Observability of switched linear systems in continuous time.
\newblock In L.~Thiele M.~Morari and F.~Rossi, editors, {\em Hybrid Systems:
  Computation and Control 2005}, volume 3414 of {\em Lecture Notes in Computer
  Science}, pages 103--117. Springer--Verlag, 2005.

\bibitem{Bal:Aut13}
A.~{Balluchi}, L.~{Benvenuti}, M.~D. {Di Benedetto}, and
  A.~{Sangiovanni-Vincentelli}.
\newblock The design of dynamical observers for hybrid systems: Theory and
  application to an automotive control problem.
\newblock {\em Automatica}, 49:915--925, 2013.

\bibitem{Balluchi02}
A.~{Balluchi}, L.~{Benvenuti}, M.~D. {Di Benedetto}, and A.L.
  {Sangiovanni-Vincentelli}.
\newblock Design of observers for hybrid systems.
\newblock In C.J. Tomlin and M.R. Greensreet, editors, {\em Hybrid Systems:
  Computation and Control}, volume 2289 of {\em Lecture Notes in Computer
  Science}, pages 76--89. Springer Verlag, 2002.

\bibitem{Bemporad:tac2000}
A.~Bemporad, G.~Ferrari-Trecate, and M.~Morari.
\newblock Observability and controllability of piecewise affine and hybrid
  systems.
\newblock {\em IEEE Trans. Automatic Control}, 45 (10):1864--1876, 2000.

\bibitem{Cieslak:1988}
R.~Cieslak, C.~Desclaux, A.S. Fawaz, and P.~Varaiya.
\newblock Supervisory control of discrete-event processes with partial
  observations.
\newblock {\em IEEE Trans. Automatic Control}, 33(3):249--260, 1988.

\bibitem{Collins:2007}
P.~Collins and J.H. {van Schuppen}.
\newblock Observability of piecewise-affine hybrid systems.
\newblock In R.~Alur and G.J. Pappas, editors, {\em Hybrid Systems: Computation
  and Control (HSCC'04)}, volume 2993 of {\em Lecture Notes in Computer
  Science}, pages 265--279. Springer, 2007.

\bibitem{DeDi:FTSC}
E.~{De Santis} and M.D. {Di Benedetto}.
\newblock Observability of hybrid dynamical systems.
\newblock {\em Foundations and Trends in Systems and Control (to appear)},
  00:0--0, 2016.

\bibitem{Des:Springer06}
E.~{De Santis}, M.D. {Di Benedetto}, S.~{Di Gennaro}, A.~{D'Innocenzo}, and
  G.~{Pola}.
\newblock Critical observability of a class of hybrid systems and application
  to air traffic management.
\newblock In H.~A.P. Blom and J.~Lygeros, editors, {\em Stochastic Hybrid
  Systems: Theory and Safety Critical Applications}, volume 337 of {\em Lecture
  Notes in Control and Information Sciences}, pages 141--170. Springer--Verlag,
  2006.

\bibitem{DeS:CDC03}
E.~{De Santis}, M.D. {Di Benedetto}, and G.~Pola.
\newblock On observability and detectability of continuous--time linear
  switching systems.
\newblock In {\em Proceedings of the $42^{nd}$ IEEE Conference on Decision and
  Control, CDC 03, Maui, Hawaii, USA}, pages 5777--5782, December 2003.

\bibitem{DeSDiB:IJRNC(ed)}
E.~{De Santis} and M.D.~{Di Benedetto} (Eds.).
\newblock Special issue on observability and observer-based control of hybrid
  systems.
\newblock {\em Int. J. Robust Nonlinear Control}, 19:1519--1520, 2009.

\bibitem{DiBenedetto05c}
M.~D. {Di Benedetto}, S.~{Di Gennaro}, and A.~D'Innocenzo.
\newblock Critical observability and hybrid observers for error detection in
  air traffic management.
\newblock In {\em Proceedings of $13^{th}$ Mediterranean Conference on Control
  and Automation, Limassol, Cyprus}, 2005.

\bibitem{DiBenedetto:CDC2005}
M.~D. {Di Benedetto}, S.~{Di Gennaro}, and A.~D'Innocenzo.
\newblock Error detection within a specific time horizon and application to air
  traffic management.
\newblock In {\em Proceedings of the Joint $44^{th}$ IEEE Conference on
  Decision and Control and European Control Conference (CDC--ECC'05), Seville,
  Spain}, pages 7472--7477, December 2005.

\bibitem{griffith:1971}
E.~W. Griffith and K.~S.~P. Kumar.
\newblock On the observability of nonlinear systems.
\newblock {\em J. Math. Anal. Appl.}, 1971.

\bibitem{Zad:2003}
S.~{Hashtrudi Zad}, R.~H. Kwong, and W.~M. Wonham.
\newblock On the observability of nonlinear systems.
\newblock {\em J. Math. Anal. Appl.}, 1971.

\bibitem{kalman:1959}
R.~E. Kalman.
\newblock On the general theory of control systems.
\newblock {\em IRE Transactions on Automatic Control}, 4(3):481--492, 1959.

\bibitem{Lafortune:2007}
S.~{Lafortune}.
\newblock On decentralized and distributed control of partially-observed
  discrete event systems.
\newblock In C.~Bonivento et~al., editor, {\em Adv. in Control Theory and
  Applications}, volume 353 of {\em (LNCIS)}, pages 171--184. Springer, 2007.

\bibitem{Lin:1994}
F.~Lin.
\newblock Diagnosability of discrete event systems and its applications.
\newblock {\em Discrete Event Dynamic Systems}, 4 (1):197--212, 1994.

\bibitem{Lin:1988}
F.~Lin and W.M. Wonham.
\newblock On observability of discrete-event systems.
\newblock {\em Information Sciences}, 44:173--198, 1988.

\bibitem{Luenb:71}
D.G. {Luenberger}.
\newblock An introduction to observers.
\newblock {\em IEEE Trans. on Automatic Control}, 16(6):596--602, 1971.

\bibitem{Ozveren90}
C.M. Ozveren and A.S. Willsky.
\newblock Observability of discrete event dynamic systems.
\newblock {\em IEEE Transactions on Automatic Control}, 35(7):797--806, 1990.

\bibitem{PolaAutom2016}
G.~Pola, D.~Pezzuti, E.~{De Santis}, and M.D. {Di Benedetto}.
\newblock Design of decentralized critical observers for networks of finite
  statemachines: A formalmethod approach.
\newblock {\em submitted}, 2016.

\bibitem{Ramadge:89}
{P. J.} Ramadge and {W. M.} Wonham.
\newblock The control of discrete-event systems.
\newblock {\em Proc. IEEE}, 77(1):81--98, 1989.

\bibitem{Ramadge86}
P.J. Ramadge.
\newblock Observability of discrete event systems.
\newblock In {\em Proceedings of the $25^{th}$ IEEE Conference on Decision and
  Control, Athens, Greece}, pages 1108--1112, December 1986.

\bibitem{Rudie:1989}
K.~Rudie and W.~Wonham.
\newblock Think globally, act locally: decentralized supervisor control.
\newblock {\em IEEE Trans. Autom. Control}, 37(11):1692--1708, 1989.

\bibitem{Sampath95}
M.~Sampath, R.~Sengupta, S.~Lafortune, K.~Sinnamohideen, and D.~Teneketzis.
\newblock Diagnosability of discrete-event systems.
\newblock {\em IEEE Transactions on Automatic Control}, 40(9):1555--1575, 1995.

\bibitem{Sears:2014}
D.~{Sears} and K.~{Rudie}.
\newblock On computing indistinguishable states of nondeter-ministic finite
  automata with partially observable transitions.
\newblock In {\em Proceedings of the 53rd IEEE Conference on Decision and
  Control, Los Angeles, California, USA}, pages 6731--6736, 2014.

\bibitem{Shu:AC2013}
S.~Shu and F.~Lin.
\newblock Delayed detectability of discrete event systems.
\newblock {\em IEEE Trans. on Automatic Control}, 58(4):862--875, 2013.

\bibitem{Shu:2007}
S.~Shu, F.~Lin, and H.~Ying.
\newblock Detectability of discrete event systems.
\newblock {\em IEEE Trans. on Automatic Control}, 52(12):2356--2359, 2007.

\bibitem{Takai:AC12}
S.~Takai and T.~Ushio.
\newblock Verification of codiagnosability for discrete event systems modeled
  by mealy automata with nondeterministic output functions.
\newblock {\em IEEE Trans. Autom. Control}, 57(3):798--804, 2012.

\bibitem{Tanw:AC13}
A.~{Tanwani}, H.~{Shim}, and D.~{Liberzon}.
\newblock Observability for switched linear systems: Characterization and
  observer design.
\newblock {\em IEEE Trans. Autom. Control}, 58(4):891--904, 2013.

\bibitem{Vidal:2003}
R.~Vidal, A.~Chiuso, S.~Soatto, and S.~Sastry.
\newblock Observability of linear hybrid systems.
\newblock In A.~Pnueli and O.~Maler, editors, {\em Hybrid Systems: Computation
  and Control}, volume 2623 of {\em Lecture Notes in Computer Science}, pages
  526--539. Springer Verlag, 2003.

\bibitem{Wang:2011}
W.~Wang, A.R. Girard, and S.~Lafortune.
\newblock On codiagnosability and coobservability with dynamic observations.
\newblock {\em IEEE Transactions on automatic control}, 56(7):1551--1566, 2011.

\bibitem{Wang:2007}
W.~Wang, S.~Lafortune, and F.~Lin.
\newblock An algorithm for calculating indistinguishable states and clusters in
  finite-state automata with partially observed transitions.
\newblock {\em Systems \& Control Letters}, pages 656--661, 2007.

\bibitem{Yin:2015}
X.~Yin and S.~Lafortune.
\newblock Codiagnosability and coobservability under dynamic observations:
  Transformations and verifications.
\newblock {\em Automatica}, 61:241--252, 2015.

\bibitem{Zaytoon:2013}
J.~Zaytoon and S.~Lafortune.
\newblock Overview of fault diagnosis methods for discrete event systems.
\newblock {\em Annual Reviews in Control}, 37:308--320, 2013.

\end{thebibliography}

\end{document}